\renewcommand{\tocsection}[3]{%
 \indentlabel{\@ifnotempty{#2}{\bfseries\ignorespaces#1 #2\quad}}\bfseries#3}
\renewcommand{\tocsubsection}[3]{%
  \indentlabel{\@ifnotempty{#2}{\ignorespaces#1 #2\quad}}#3}
\newcommand\@dotsep{4.5}
\def\@tocline#1#2#3#4#5#6#7{\relax
  \ifnum #1>\c@tocdepth % then omit
  \else
    \par \addpenalty\@secpenalty\addvspace{#2}%
    \begingroup \hyphenpenalty\@M
    \@ifempty{#4}{%
      \@tempdima\csname r@tocindent\number#1\endcsname\relax
    }{%
      \@tempdima#4\relax
    }%
    \parindent\z@ \leftskip#3\relax \advance\leftskip\@tempdima\relax
    \rightskip\@pnumwidth plus1em \parfillskip-\@pnumwidth
    #5\leavevmode\hskip-\@tempdima{#6}\nobreak
    \leaders\hbox{$\m@th\mkern \@dotsep mu\hbox{.}\mkern \@dotsep mu$}\hfill
    \nobreak
    \hbox to\@pnumwidth{\@tocpagenum{\ifnum#1=1\bfseries\fi#7}}\par% <-- \bfseries for \section page
    \nobreak
    \endgroup
  \fi}
\renewcommand\csname r@tocindent0\endcsname{0pt}
\def\l@subsection{\@tocline{2}{0pt}{2.5pc}{5pc}{}}
\newcommand{\olsi}[1]{\,\overline{\!{#1}}} % olsi short italic
\newcommand{\ols}[1]{\mskip.5\thinmuskip\overline{\mskip-.5\thinmuskip {#1} \mskip-.5\thinmuskip}\mskip.5\thinmuskip} % overline short
\newcommand{\ii}{\ensuremath{\sqrt{-1}}}
\newcommand{\pp}{\bar\partial}
 \newtheorem{theorem}{Theorem}[section]
 \newtheorem{lemma}[theorem]{Lemma}
 \newtheorem{corollary}[theorem]{Corollary}
 \newtheorem{proposition}[theorem]{Proposition}
 \newtheorem{conjecture}[theorem]{Conjecture}
 \newtheorem{remark}[theorem]{Remark}
 \newtheorem*{claim}{Claim}
 \theoremstyle{definition}
 \newtheorem{question}[theorem]{Question}
  \newtheorem{problem}[theorem]{Problem}
\numberwithin{equation}{section}
\newcommand{\p}{\partial}
\newcommand{\bp}{\bar\partial}
\newcommand{\C}{\mathbb{C}}
\newcommand{\R}{\mathbb{R}}
\DeclareMathOperator{\diam}{diam}
\DeclareMathOperator{\Id}{Id}
\DeclareMathOperator{\Vol}{Vol}
\begin{document}
 \title{No semistability at infinity for  Calabi-Yau metrics asymptotic to cones}
 
 \author{Song Sun}
\address{Department of Mathematics, University of California, Berkeley, CA 94720 } 
\email{sosun@berkeley.edu}\author{Junsheng Zhang}
\address{Department of Mathematics, University of California, Berkeley, CA 94720 } 
\email{jszhang@berkeley.edu}

 \begin{abstract}
 We discover a ``no semistability at infinity" phenomenon for complete Calabi-Yau metrics asymptotic to cones, which is proved by eliminating the possible appearance of an intermediate K-semistable cone in the 2-step degeneration theory developed by Donaldson and the first author. It is in sharp contrast to the  setting of local singularities of K\"ahler-Einstein metrics. A byproduct of the proof is a polynomial convergence rate  to the asymptotic cone for such manifolds, which bridges the gap between the general theory of Colding-Minicozzi and the classification results of  Conlon-Hein.
 \end{abstract}

 \maketitle
 \section{Introduction}

Throughout this paper we denote by $(X, p, g, J, \omega, \Omega)$ a pointed complete Calabi-Yau manifold of complex dimension $n$. This means that $p\in X$,  $(g, J, \omega)$ is a complete K\"ahler metric, $\Omega$ is a holomorphic volume form and the following complex Monge-Amp\`ere equation is satisfied
\begin{equation*}\omega^n=(\sqrt{-1})^{n^2} \Omega\wedge\overline\Omega.	
\end{equation*}
 We will always impose the following two conditions
 \begin{itemize}
 		\item   Euclidean volume growth:  there exists a $\kappa>0$ such that  $\Vol(B_g(p, R))\geqslant \kappa R^{2n}$ for all $R>0$. 
 		\item Quadratic curvature decay:  there exists a $C>0$ such that $|Rm_g(q)|\leqslant Cd_g(p, q)^{-2}$ for all $q\neq p$. 
 	\end{itemize}
 We also denote by  $(\mathcal C, O, g_\mathcal C, J_\mathcal C, \omega_\mathcal C, \Omega_{\mathcal C})$ a Calabi-Yau cone. This means that  $O$ is a  
  distinguished point called the \emph{vertex}, $\mathcal C\setminus \{O\}$ is a smooth manifold diffeomorphic to $\R_{>0}\times L$ for some compact manifold $L$,   $(g_\mathcal C, J_{\mathcal C}, \omega_{\mathcal C}, \Omega_{\mathcal C})$ is a Calabi-Yau metric on $\mathcal C\setminus \{O\}$ and $g_\mathcal C=dr^2+r^2g_L$ is a Riemannian cone. A Calabi-Yau cone is naturally a normal affine algebraic cone.
 
It is known by \cite{DS2, Liu} that any $X$ as above is  asymptotic to a unique Calabi-Yau cone $\mathcal C$ at infinity. Moreover, $X$ is naturally a quasi-projective variety  and  there is an algebro-geometric 2-step degeneration from $X$ to $\mathcal C$, via a possible intermediate K-semistable cone $W$.  The main result of this paper is

 \begin{theorem}[No semistability at infinity]\label{t:main0}
 $W$ is  isomorphic to $\mathcal C$ as normal affine algebraic cones. 	
 \end{theorem}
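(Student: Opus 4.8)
The plan is to argue by contradiction: assume $W$ is not isomorphic to $\mathcal{C}$ and derive a contradiction with the existence of the complete Ricci-flat K\"ahler metric $g$ on $X$ asymptotic to the cone metric $g_{\mathcal{C}}$. I would first record the algebraic structure of the degeneration. The cone $W$ corresponds to the associated graded ring of the algebra $\mathcal{O}_{\mathrm{poly}}(X)$ of holomorphic functions of polynomial growth on $X$ with respect to its growth-order filtration, while $\mathcal{C}$ is obtained from $W$ by a further $\C^{*}$-degeneration inside $\Aut(W)$, the two cones sharing the same Reeb field and the same normalized volume. If $W\not\cong\mathcal{C}$ this degeneration is nontrivial; let $\zeta$ denote the holomorphic vector field on $W$ generating it, which strictly lowers the degree. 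One also observes that $W\setminus\{O_{W}\}$ is smooth: the relative singular locus of the $\C^{*}$-equivariant family $W\rightsquigarrow\mathcal{C}$ is closed and $\C^{*}$-invariant, so were it nonempty over a general fiber its closure would meet the central fiber $\mathcal{C}$ in positive dimension, contradicting smoothness of $\mathcal{C}$ away from its vertex. Thus $\zeta$ is a genuine holomorphic vector field on the normal affine cone $W$ vanishing at the vertex, and the family $W\rightsquigarrow\mathcal{C}$ represents a nonzero class in the space of Calabi-Yau cone deformations of $\mathcal{C}$ modulo automorphisms.

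Next I would build a dictionary between this algebraic picture and the asymptotic geometry of $(X,g,J,\Omega)$ at infinity. Fix a diffeomorphism $F$ from an end of $\mathcal{C}$ onto an end of $X$ with $F^{*}(g,J,\Omega)\to(g_{\mathcal{C}},J_{\mathcal{C}},\Omega_{\mathcal{C}})$; starting from the a priori (in general only logarithmic) convergence rate of Colding-Minicozzi, combined with quadratic curvature decay, Cheeger-Tian-type estimates, and a three-annulus/bootstrap argument, one expands $F^{*}g-g_{\mathcal{C}}$ and each holomorphic function of polynomial growth into eigencomponents of the Reeb flow on the link $L$. The identity $W\cong\mathcal{C}$ is then equivalent to the statement that every holomorphic function of polynomial growth of degree $\lambda$ has a well-defined $\lambda$-homogeneous leading term that is holomorphic on $\mathcal{C}$ --- equivalently, that the growth filtration on $\mathcal{O}_{\mathrm{poly}}(X)$ is induced by an honest $\R$-grading matching the one on the coordinate ring of $\mathcal{C}$. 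Assuming $W\not\cong\mathcal{C}$, the failure of this is carried by a \emph{resonant mode}: there is a critical indicial rate $\mu_{0}$ of the linearized Ricci-flat equation on $\mathcal{C}$ and a nonzero term in the expansion of $F^{*}g-g_{\mathcal{C}}$ at order $r^{\mu_{0}}$, possibly with a $\log r$ factor, whose leading coefficient is, modulo the action of holomorphic vector fields, the asymptotic realization of $\zeta$.

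The heart of the argument is to rule out such a resonant mode, which is exactly where the difference from the setting of local singularities enters: $X$ carries a smooth complete Ricci-flat metric with no inner boundary. I would use an integral identity of Pohozaev type derived from the complex Monge-Amp\`ere equation. Writing $\omega=\omega_{\mathcal{C}}+\sqrt{-1}\partial\bar\partial u$ near infinity, one pairs the Monge-Amp\`ere equation for $u$ against the Euler field $r\partial_{r}$ and integrates over $B_{g}(p,R)$, obtaining a boundary quantity $Q(R)$ on $\{r=R\}$ whose $R$-dependence is controlled by the Ricci-flat equation. Letting the inner radius tend to $0$, smoothness and completeness of $X$ force the interior contribution to vanish; on the other hand, by the asymptotic expansion $\lim_{R\to\infty}Q(R)$ is a positive multiple of the $L^{2}$-norm over $L$ of the resonant mode --- taking $\mu_{0}$ to be the slowest resonant rate, and iterating otherwise --- which is strictly positive because $W\rightsquigarrow\mathcal{C}$ is nontrivial. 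This contradiction forces $\zeta=0$, hence $W\cong\mathcal{C}$. If instead no resonant mode occurs at all, then the growth filtration already recovers $\mathcal{C}$, using in addition the uniqueness of the metric asymptotic cone \cite{DS2,Liu}.

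Once $W=\mathcal{C}$, the absence of resonant modes together with the discreteness of the indicial roots of the linearized equation on $\mathcal{C}$ allows the bootstrap to iterate down to a genuine polynomial rate of convergence $X\to\mathcal{C}$ --- the stated byproduct, which brings $X$ within the scope of Conlon-Hein's classification. I expect the main obstacle to be the combination of the second and third steps: showing that the \emph{only} obstruction to $W=\mathcal{C}$ is a single resonant mode at a distinguished rate, pinning down the precise Pohozaev-type conservation law, and making everything rigorous starting only from a weak a priori rate; the $\log r$ terms, and the lower-order modes that must be peeled off before the resonant rate is reached, are further technical hurdles.
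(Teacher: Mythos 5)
Your proposal takes a genuinely different route from the paper's, and unfortunately it has a serious gap at its central step.

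The paper's proof is geometric and indirect: it uses a non-traditional Tian--Yau construction, with the \emph{unknown} metric $\omega$ on $X$ as the model at infinity, to graft a Calabi--Yau metric $\hat\omega_\infty$ onto $W\setminus\{0\}$ (passing through the crepant resolutions $\hat Y_j$ and a double limit $\epsilon\to 0$, $j\to\infty$). The key point is then that the metric completion $Z_\infty$ of $(W\setminus\{0\},\hat\omega_\infty)$ is a pointed Gromov--Hausdorff limit of \emph{complete} Ricci-flat manifolds, so Bishop--Gromov applies across the singular point $0$. Since the asymptotic cone of $Z_\infty$ is $\mathcal C$ and, by Li--Xu's normalized-volume minimization, the metric tangent cone $\mathcal C'$ at $0$ satisfies $\kappa(\mathcal C')\leqslant\kappa(\mathcal C)$, while Bishop--Gromov forces $\kappa(\mathcal C')\geqslant\kappa(\mathcal C)$, the volume ratio is constant, so $Z_\infty$ is a metric cone, hence $W\cong\mathcal C$. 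Theorem \ref{t:main2} then falls out of the asymptotic estimates built into the graft. Your proposal instead tries to avoid constructing any auxiliary metric and to argue directly from an asymptotic expansion of $F^*g-g_{\mathcal C}$ plus a global Pohozaev-type conservation law.

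The gap: the Pohozaev identity you invoke is not pinned down, and without a precise formula there is no way to check either the sign of the putative boundary term at infinity or the claim that ``letting the inner radius tend to $0$ forces the interior contribution to vanish.'' The Euler field $r\partial_r$ exists only near infinity (via the cone structure); it is not a global holomorphic or conformal vector field on $X$, so pairing the Monge--Amp\`ere equation with it does not automatically produce a conserved quantity with a clean interior limit. Moreover, your argument presumes that the failure of $W\cong\mathcal C$ is encoded in ``a single resonant mode at a distinguished rate $\mu_0$'' with a well-defined leading coefficient in an eigenfunction expansion. But the a priori convergence rate is only logarithmic, and the precise technical difficulty in the 2-step degeneration theory of \cite{DS2} is exactly that the accumulated error in $F_j\circ\cdots\circ F_2$ can diverge away from $\Lambda^{j-1}$, so one cannot a priori peel off a leading-order homogeneous term. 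Your ``three-annulus/bootstrap'' is asked to produce a genuine $r^{\mu_0}(\log r)^m$ expansion before the conservation law can be applied, which is circular: the expansion is essentially what the polynomial-rate conclusion (Theorem \ref{t:main2}) gives you, not an input. Finally, the paper's Section 6 discussion suggests that \emph{ends} asymptotic to $\mathcal C$ at only a logarithmic rate should exist (\`a la Adams--Simon) whenever $\mathcal C$ has obstructed deformations; what fails is completing such an end to a smooth complete K\"ahler manifold. So a purely end-local spectral argument cannot work, and the global input has to do more than delete an inner boundary term --- in the paper it enters through the Gromov--Hausdorff limit construction that legitimizes Bishop--Gromov across the singularity, which your outline does not replicate.
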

 This comes out of our expectation, since the 2-step degeneration theory of \cite{DS2} works symmetrically with respect to asymptotic cones at infinity and local tangent cones at a singularity and it is known that $W$ \emph{can not} always be eliminated in the local setting. Theorem \ref{t:main0} reveals a sharp contrast between the two cases. Our proof makes a novel use of the Tian-Yau construction of complete Calabi-Yau metrics and the Bishop-Gromov volume monotonicity in Riemannian geometry. 
As a byproduct, we also show that 	$X$ is polynomially asymptotic to $\mathcal C$ at infinity:

\begin{theorem}[Polynomial rate to the asymptotic cone]
	\label{t:main2}
 There exists a diffeomorphism $\Psi$ from the open subset $\{r>1\}\subset \mathcal C$ to the complement of a compact set in $X$ such that for some $\delta>0$ and for all $k\geqslant0$,
 	\begin{equation}\label{e:polynomial rate}
 		\left|\nabla_{g_\mathcal C}^k(\Psi^*g-g_{\mathcal C})\right|_{g_{\mathcal C}}+\left|\nabla_{g_\mathcal C}^k(\Psi^*J-J_{\mathcal C})\right|_{g_{\mathcal C}}
 		+\left|\nabla_{g_\mathcal C}^k(\Psi^*\Omega-\Omega_{\mathcal C})\right|_{g_{\mathcal C}}=O(r^{-\delta-k}), \ \ \ r\rightarrow\infty.
 	\end{equation}
\end{theorem}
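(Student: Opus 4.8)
The plan is to combine the algebro-geometric conclusion of Theorem~\ref{t:main0} with a decay-improvement argument for the complex Monge-Amp\`ere equation on $\mathcal{C}$. As a starting point, the uniqueness of tangent cones at infinity due to Colding-Minicozzi, together with the quadratic curvature decay and the $\epsilon$-regularity it yields at all scales, already produces a diffeomorphism $\Psi$ from $\{r>R_0\}\subset\mathcal{C}$ onto the complement of a compact set in $X$ for which $\Psi^*g-g_{\mathcal{C}}$, $\Psi^*J-J_{\mathcal{C}}$ and $\Psi^*\Omega-\Omega_{\mathcal{C}}$, along with all their $g_{\mathcal{C}}$-covariant derivatives, tend to $0$ as $r\to\infty$, but a priori only at a sub-polynomial, at worst $(\log r)^{-c}$, rate. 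It therefore suffices to upgrade each of these three differences to a bound $O(r^{-\delta-k})$ for a single fixed $\delta>0$.

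I would first treat $J$ and $\Omega$. By Theorem~\ref{t:main0} the $2$-step degeneration of \cite{DS2} reduces to a single step: $X$ carries a $\C^*$-equivariant degeneration to $\mathcal{C}$ over $\C$ whose generic fibre is isomorphic to $X$, realized in a Donaldson-Sun embedding of $X$ and $\mathcal{C}$ into a common $\mathbb{C}^N$ in which $I_{\mathcal{C}}$ is the initial ideal of $I_X$ for the Reeb weights. Since only finitely many Reeb weights occur there is a positive gap below the leading weight of each generator of $I_X$, and straightening this equivariant degeneration near the smooth link (in the spirit of Conlon-Hein) — by the implicit function theorem, together with the scaling estimates that accompany the algebraicity of the defining equations — modifies $\Psi$ so that $\Psi^*J-J_{\mathcal{C}}=O(r^{-\lambda-k})$ for some $\lambda>0$, and, the holomorphic volume forms being part of the same algebraic data, likewise $\Psi^*\Omega-\Omega_{\mathcal{C}}=O(r^{-\lambda-k})$. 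Note that $\Psi^*J$ and $J_{\mathcal{C}}$ remain genuinely distinct complex structures on the end; only the polynomial smallness of their difference will be used below.

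It remains to estimate $\Psi^*\omega-\omega_{\mathcal{C}}$. Writing $\tilde J=\Psi^*J$, the reference form $\omega_0=\sqrt{-1}\,\partial_{\tilde J}\bar\partial_{\tilde J}(r^2/2)$ is $\tilde J$-K\"ahler on the end and satisfies $\omega_0-\omega_{\mathcal{C}}=O(r^{-\lambda'-k})$; moreover $\omega_{\mathcal{C}}$ is exact on the end and $\Psi^*\omega-\omega_{\mathcal{C}}$ decays, so $\Psi^*\omega$ is exact there too and, by the $\partial\bar\partial$-lemma on the cone, $\Psi^*\omega=\omega_0+\sqrt{-1}\,\partial_{\tilde J}\bar\partial_{\tilde J}\phi$. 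Using the previous step, the Monge-Amp\`ere equation $\omega^n=(\sqrt{-1})^{n^2}\Omega\wedge\overline\Omega$ takes the form of a scalar equation $\Delta_{g_{\mathcal{C}}}\phi=h+Q(\nabla^2\phi)+(\text{l.o.t.})$, where $h$ decays polynomially, $Q$ is quadratic and higher in the Hessian of $\phi$, and the lower-order terms come from the discrepancies $\tilde J-J_{\mathcal{C}}$ and $\omega_0-\omega_{\mathcal{C}}$. A priori one knows from Colding-Minicozzi only that the Hessian of $\phi$ is $o(1)$; the standard decay-improvement scheme — expanding the relevant quantities in eigenfunctions of the Laplacian $\Delta_L$ of the link $L$, comparing against the discrete set of indicial roots of $\Delta_{g_{\mathcal{C}}}$, peeling off the finitely many low modes, and iterating — then upgrades $\sqrt{-1}\,\partial_{\tilde J}\bar\partial_{\tilde J}\phi$, hence $\Psi^*\omega-\omega_{\mathcal{C}}$, to $O(r^{-\delta})$, after which interior elliptic estimates recover all derivatives. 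Assembling the three estimates gives (\ref{e:polynomial rate}).

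The main difficulty is launching that last iteration, that is, passing from the sub-polynomial Colding-Minicozzi rate to \emph{any} genuine polynomial rate; this is exactly where Theorem~\ref{t:main0} is used in an essential way. A failure of polynomial decay would be detected by a nonzero ``zero mode'' at the critical rate of the linearized Monge-Amp\`ere operator on $\mathcal{C}$, and such a mode is of precisely the type that would produce a nontrivial intermediate K-semistable cone $W\not\cong\mathcal{C}$ in the $2$-step degeneration; its absence — which is the content of Theorem~\ref{t:main0}, proved via the Tian-Yau construction and Bishop-Gromov monotonicity — is what licenses the bootstrap. A secondary point requiring care is the bookkeeping of the $\partial\bar\partial$-lemma on the end and of the ``expected'' low modes of $\phi$ (those at the indicial roots governing constants, the scaling of the cone, and holomorphic-vector-field deformations), which must be absorbed into improved choices of $\Psi$ and $\omega_0$ so that only strictly decaying modes remain to be iterated away.
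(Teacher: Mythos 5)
Your proposal diverges from the paper's argument in a way that opens a genuine gap at the decisive step. The paper does \emph{not} run a decay-improvement/indicial-root iteration on the end of $X$. Instead, it uses the projection map $p_\infty$ (Proposition~\ref{construction of projection maps}), built from the algebraicity of the degeneration $X'\rightsquigarrow W$ with a fixed rate $\delta_0>0$, to transplant $\omega$ to a background form $\omega_\infty$ on $W\setminus\{0\}$ whose Ricci potential decays at a definite polynomial rate; it then \emph{solves} the complex Monge-Amp\`ere equation on $W$ (via the double limit in $\epsilon$ and $j$) to produce a Calabi-Yau metric $\hat\omega_\infty$ which is forced by Bishop--Gromov and the normalized-volume minimization to be the cone metric of $\mathcal{C}$, and the estimates \eqref{estimate for u and phi} and \eqref{fix angle} then give the polynomial closeness of $\omega_\infty$ to $\hat\omega_\infty$. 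Theorem~\ref{t:main2} drops out by composing the two polynomial comparisons $\omega \leftrightarrow \omega_\infty \leftrightarrow \hat\omega_\infty = \omega_{\mathcal{C}}$. In short, the polynomial rate is a \emph{byproduct} of the Tian--Yau construction on $W$, not the result of a bootstrap initiated on $X$.

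The specific gap in your approach is the claim that Theorem~\ref{t:main0} ``licenses the bootstrap'' by ruling out the zero mode that would otherwise cause logarithmic decay. These are not the same kind of obstruction. The slow modes in a {\L}ojasiewicz--Simon/Adams--Simon analysis are Jacobi fields of the Sasaki--Einstein link (non-integrable deformations of the cone as a critical point), whereas $W\not\cong\mathcal{C}$ is a statement about the degree filtration on $R(X)$ and the finitely-generated graded ring it determines; knowing $W=\mathcal{C}$ says nothing about whether $\mathcal{C}$ has obstructed deformations. Indeed the authors themselves remark (citing the expected Adams--Simon mechanism) that a cone with obstructed deformations should admit Calabi-Yau \emph{ends} converging only logarithmically; what rules this out for a \emph{complete} $X$ is the global Tian--Yau/Bishop--Gromov argument, which your local PDE analysis on the end of $X$ cannot see. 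Without a polynomial starting point, the indicial-root peeling cannot launch, and the logarithmic Colding--Minicozzi rate gives no leverage. Your treatment of $J$ and $\Omega$ via ``straightening the equivariant degeneration'' is closer in spirit to what the paper does through $p_j$ and Proposition~\ref{construction of projection maps}, but even there the polynomial control on the diffeomorphism is obtained by matching the submanifolds $Y_j$ and $W$ inside $\mathbb{C}^N$ using the leading-term structure of the ideal, not by an implicit-function argument on the link; and crucially, that control is then fed \emph{forward} into the construction of $\hat\omega_\infty$ on $W$, not used to initialize an iteration on $X$.
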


Theorem \ref{t:main2} bridges the gap between two previous results. On the one hand,  on the level of Riemannian metrics the  result of Colding-Minicozzi \cite{CM}, which uses the \L{}ojasiewicz-Simon theory, implies that $X$ is asymptotic to a unique cone at a \emph{logarithmic} rate.  On the other hand, Conlon-Hein \cite{CH3} classified complete Calabi-Yau manifolds asymptotic to a Calabi-Yau cone, however under the stronger extra assumption of  \emph{polynomial} rate (as  given exactly by \eqref{e:polynomial rate}).

\subsection*{Acknowledgements} The first author would like to thank Xuemiao Chen for many discussions on related questions in the context of Hermitian-Yang-Mills connections. The second author thanks Ronan Conlon for his lectures at UC Berkeley. We thank Lorenzo Foscolo for helpful discussions, and  Ronan Conlon, Yuji Odaka for comments. We also thank the anonymous referee for useful suggestions which improved the presentation. Both authors are partially supported by NSF Grant DMS-2004261. The first author is partially supported by the Simons Collaboration Grant in special holonomy.
\section{Outline of the proof}\label{two step degeneration}
\subsection{2-step degeneration theory} 
Fix $\lambda\equiv1/\sqrt{2}$. Denote by $(X_j, J_j, g_j, \Omega_j, p_j)$ the rescaling of $X$ by a factor $\lambda^{j}$. This means that we take $J_j=J, p_j=p, g_j=\lambda^{2j}g, \Omega_j=\lambda^{nj}\Omega$. It follows from the Bishop-Gromov volume monotonicity that after passing to a subsequence $(X_j, p_j)$ converges in the pointed Gromov-Hausdorff sense to a Calabi-Yau cone. We summarize below the theory developed in \cite{DS2} and \cite{Liu}. In  \cite{DS2} a more general singular setting is considered but under a technical assumption that $X$ is  a rescaled Gromov-Hausdorff limit of \emph{polarized} K\"ahler-Einstein manifolds. This assumption was later removed in \cite{Liu} for complete Calabi-Yau manifolds with Euclidean volume growth and quadratic curvature decay, so the results of \cite{DS2} apply in our setting. 

One conclusion of \cite{DS2, Liu} is that  the asymptotic cone is unique as a Calabi-Yau cone, i.e., it does not depend on the choice of subsequences involved in the definition. This may also follow from the more general result of Colding-Minicozzi \cite{CM} (which does not concern the complex structure), but the complex geometric proof in \cite{DS2, Liu} more importantly gives an algebro-geometric description of the asymptotic cone.  This is crucial for us so below we review some relevant statements.

Let $(\mathcal C, g_{\mathcal C}, J_{\mathcal C}, \omega_{\mathcal C}, \Omega_{\mathcal C}, O)$ be the unique asymptotic Calabi-Yau cone of $X$.  Since  $\mathcal C$ is a K\"ahler cone,  there is  a  Reeb vector field $\xi=J_{\mathcal C}(r\p_r)$, which is holomorphic and Killing. It generates a holomorphic isometric action of a   compact torus $\mathbb T$ on $\mathcal C$. 
    The K\"ahler form has a simple expression given by $\omega_{\mathcal C}=\frac{\ii}{2}\p_{J_\mathcal C}\bp_{J_\mathcal C} r^2$. Furthermore, $\mathcal C$ is naturally a normal affine variety.  The coordinate ring  $R(\mathcal C)$ can be intrinsically characterized as  the space of holomorphic functions on $\mathcal C\setminus \{O\}$ that have polynomial growth at infinity. More explicitly, 
there is a holomorphic embedding $\Phi_\infty: (\mathcal C, O)\rightarrow(\C^N, 0)$ such that the Reeb vector field $\xi$ extends to a linear vector field on $\C^N$ of the form $Re(\sqrt{-1}\sum_j d_j z_j\p_{z_j})$, where $d_j\in \mathbb R_{>0}$ for all $j$. 
	Let $\Lambda: \C^N\rightarrow\C^N$ be the diagonal linear transformation of $\C^N$ given by 
$$\Lambda(z_1,\cdots, z_N)=(\lambda^{d_1}z_1, \cdots, \lambda^{d_N}z_N), $$
and let $T_\xi$ be the one parameter group of linear transformations generated by $\xi$.
Denote by $G_\xi$ the subgroup of  $GL(N;\C)$ consisting of elements that commute with $T_\xi$ and denote $K_\xi=G_\xi\cap U(N)$. Notice $T_\xi$ naturally acts on $\mathcal C$.

Denote by $B_j$ the unit ball around $p_j$ in $X_j$, which we can identify with the ball of radius $\lambda^{-j}$ around $p$ in $X$. Denote by $B_\infty$ the unit ball around the vertex $O$ in $\mathcal C$. Then we know that $B_\infty$ is the Gromov-Hausdorff limit of $B_j$ as $j\rightarrow\infty$.  Fix a distance function on $B_j\sqcup B_\infty$ that realizes the Gromov-Hausdorff convergence. We have the following

\begin{enumerate}[(a).]
	\item  The ring $R(X)$ of holomorphic functions on $X$ with polynomial growth is finitely generated and $\text{Spec}(R(X))$ defines a normal affine algebraic variety $X'$ with isolated singularities. There is a natural map $\pi:(X,p)\rightarrow (X',p')$ which is a crepant resolution of singularities.
	\item For all $j\geqslant 1$, there are holomorphic embeddings $\Phi_j: (X',p')\rightarrow (\C^N,0) $ and $F_j\in G_\xi$ satisfies $F_j=\Lambda+\tau_j$ for linear maps $\tau_j\rightarrow 0$, such that 

\begin{enumerate}[(1).]
\item $\Phi_{j+1}=F_{j+1}\circ\Phi_{j}$.
	\item For any subsequence of $\{j\}$ tends to infinity, passing to a further subsequence $\Phi_j(\pi(B_j))$ converges to $h.\Phi_\infty(B_\infty)$ in the Hausdorff sense in $\C^N$ for some $h\in K_\xi$. This convergence is compatible with the Gromov-Hausdorff convergence in the sense that given any sequence $x_j\in B_j$ converging to $x_\infty\in B_\infty$, we have $\Phi_j(\pi(x_j))$ converges to $h.\Phi_\infty(x_\infty)$.
	\item  We will always identify $(X', p')$ with $(\Phi_1(X'),0)$. Denote $Y_j\equiv\Lambda^{j-1}.X'$. Then the weighted asymptotic cone $W=\lim_{j\rightarrow\infty}Y_j$ is a normal affine algebraic variety in $\C^N$. In particular, $W$ is also invariant under the $T_\xi$ action. This limit is to be understood in a strong sense: there exists a generating set $\{P_1, \cdots, P_l\}$ of the ideal $\mathcal I_{X'}$ of polynomials on $\C^N$ vanishing on $X'$, such that the leading terms $\{\tilde P_1, \cdots, \tilde P_l\}$ generate the ideal $\mathcal I_{W}$. Here the leading term of a polynomial is defined via the weighted degree associated to $\Lambda$.
	\item The coordinate ring $R(W)$ can be intrinsically described as the graded ring associated to a filtration on $R(X)$ determined by a \emph{degree} function associated to the Calabi-Yau metric $g$.
	\item 
	$\mathcal C$ is in the closure of the $G_\xi$ orbit of $W$ in a suitable multi-graded Hilbert scheme. In particular, $W$ has only one isolated singularity.
\end{enumerate}  
\end{enumerate}

The motivation in \cite{DS2} was to relate the metric scaling on $X$ to the algebro-geometric scaling $\Lambda$. The main discovery there is that they are \emph{almost but not exactly} the same, and one
 may realize the asymptotic cone $\mathcal C$ as a 2-step degeneration from $X'$ through the intermediate cone $W$. From the technical aspect the central issue is that as $j\rightarrow\infty$, even though $F_j$ converges to $\Lambda$ the accumulated error in the composition $F_j\circ F_{j-1}\circ \cdots \circ F_2$ may still diverge away from $\Lambda^{j-1}$.   Notice $W$ does not inherit any canonical metric from $X$ or $\mathcal C$. In 
\cite{DS2}  a parallel result is also proved for tangent cones at local singularities of K\"ahler-Einstein metrics. However there are important distinctions between the two settings.

One such distinction was already pointed out in \cite{DS2}, which shows that the local setting is more \emph{rigid} and the asymptotic setting is more \emph{flexible}. In the case of local tangent cones, it is  conjectured in \cite{DS2} that there is a local notion of (K-)stability which characterizes both $(W, \xi)$ and $\mathcal C$ as \emph{invariants} of the underlying algebraic singularity (by analogy with the Harder-Narasimhan-Seshadri filtration for sheaves, one should think of $W$ as a \emph{semistable} object and $\mathcal C$ as a \emph{polystable} object). Later Li \cite{ChiLi} related this picture to a generalized volume minimization in Sasaki geometry and reformulated this conjecture using more algebro-geometric terminologies. 
There has been active research  on this topic. The uniqueness part of the conjecture in \cite{DS2} was confirmed by Li-Wang-Xu \cite{LWX}. By way of contrast as suggested at the end of \cite{DS2}, for asymptotic cones of complete Calabi-Yau metrics one should not expect either $W$ or $\mathcal C$ is a canonical object associated to the underlying algebraic variety $X$. Explicit examples have been constructed on $\C^n$ if we allow $W$ and $\mathcal C$ to have singular cross sections (see \cite{HN, YLi, Gabor1, CR}).

The main result of this paper shows that the asymptotic setting is however more \emph{rigid} from a different perspective. 
In the case of tangent cones at singularities of K\"ahler-Einstein metrics, it is known that both of the 2 steps are necessary in order to  degenerate the singularity to the metric tangent cone $\mathcal C$.   In the case of asymptotic cones at infinity we will  show that $W$ is always isomorphic to $\mathcal C$, in other words, one can degenerate the variety to $\mathcal C$ in one step.  Theorem \ref{t:main0} can be reformulated as

\

 \noindent {\bf Theorem \ref{t:main0}} (No semistability at infinity).
\emph{  $W$ is isomorphic to $\mathcal C$ as affine varieties with $T_\xi$ action.   }

\subsection{Outline of the proof}
Now we explain the key ideas in the proof of Theorem \ref{t:main0}. Theorem \ref{t:main2} will  be a consequence.
Our strategy is to construct a Calabi-Yau metric ${\omega_W}$ on $W\setminus\{0\}$ so that 
\begin{enumerate}
	\item the asymptotic cone of ${\omega_W}$ at infinity is given by $\mathcal C$;
  	\item the metric completion of ${\omega_W}$ is homeomorphic to $W$ and is a pointed Gromov-Hausdorff limit of complete Calabi-Yau metrics. Furthermore  the argument in \cite{DS2} can be extended to study the singular point $0\in W$.
\end{enumerate}
Now the Bishop-Gromov inequality appling on $W$ yields that the volume ratio $$\nu(R)=\frac{\text{Vol}(B(0, R))}{R^{2n}}$$ is a non-increasing function in $R$. On a Calabi-Yau cone $\mathcal C$ we denote by $\kappa(\mathcal C)=\text{Vol}(B(O, 1))$ the volume density of $\mathcal C$.   Now as $R\rightarrow\infty$,  $\nu(R)$ converges to $\kappa(\mathcal C)$, while as $R\rightarrow0$ it converges to the volume density of the metric tangent cone at $0$. By \cite{DS2} and the generalized volume minimization in \cite{LX} we know the latter is not bigger than $\kappa(\mathcal C)$.   So  $\nu(R)$ is constant in $R$, which implies that $(W, {\omega_W})$ must be a metric cone hence is isometric to $\mathcal C$. With  more work one can show that $W$ and $\mathcal C$ are $T_\xi$-equivariantly isomorphic.

 The existence of ${\omega_W}$ is therefore the crux of the matter. To achieve this we will make use of the Tian-Yau construction \cite{TY2}  which produces complete Calabi-Yau metrics out of lower dimensional compact K\"ahler-Einstein metrics. In \cite{TY2} one starts with a Fano manifold $X$ and a smooth divisor $D$ which is $\mathbb Q$-linearly equivalent to $K_X^{-1}$, then assuming that $D$ admits a K\"ahler-Einstein metric with positive Ricci curvature, one can find a complete Calabi-Yau metric on $X\setminus D$. The rough idea is to first use the Calabi ansatz to write down a background complete K\"ahler metric  which is approximately Ricci-flat and then solve the complex Monge-Amp\`ere equation on the noncompact manifold $X\setminus D$. The work of \cite{TY2} has been extended widely to produce more examples of Calabi-Yau metrics. 

 In the proof of Theorem \ref{t:main0} we will apply the Tian-Yau construction in a non-traditional manner. That is, we will use the unknown Calabi-Yau metric $\omega$ on $X$ as a model at infinity to construct a Calabi-Yau metric ${\omega_W}$ on $W$. This is achieved by constructing a diffeomorphism between the ends of $X$ and  $W$,  then grafting $\omega$  to a K\"ahler form on $W$ that is slowly asymptotic to the cone $\mathcal C$, but satisfies the complex Monge-Amp\`ere equation with polynomially decaying error. Most of the work in this paper is devoted to making this strategy rigorous. There is an extra  difficulty caused by the fact that $W$ has an isolated singularity. To get around the issue, we notice that $W$ is the limit of $Y_j$ in $\C^N$, and $Y_j$ admits a crepant resolution $\hat Y_j$ (which is isomorphic to $X$). We will   first construct a family of Calabi-Yau metrics $\hat\omega_{j, \epsilon}$ on $\hat Y_j$ with uniform estimates, where $\epsilon$ roughly measures the size of the exceptional set,  and then take a double limit by first letting $\epsilon\rightarrow 0$ and then letting $j\rightarrow\infty$. The first limiting step follows similarly the line of argument in \cite{CGT} which constructs certain asymptotically conical (in the  sense of \cite{CH1}) Calabi-Yau metrics with singularities, under more restrictive assumptions.  In the course of the proof we will bring in more robust techniques, such as the H\"ormander $L^2$ estimate, which is of independent interest.

In the rest of this paper we will present a detailed proof of the main results. 
In Section \ref{S3} we construct a family of background K\"ahler metrics on $\hat Y_j$ which are asymptotic to the cone $\mathcal C$ and are approximate solutions to the complex Monge-Amp\`ere equation at infinity. In Section \ref{s5} we solve the  complex Monge-Amp\`ere equation  with respect to these background metrics and derive uniform estimates. In Section \ref{s6} we complete the proof of Theorem \ref{t:main0} and \ref{t:main2}.

We make a convention on the notations throughout this paper: $K$ will denote a compact set in some space; $C$ will denote a  positive constant; $C_{\sharp}$ will denote a  constant that depends only on $\sharp$; in particular when $j$ does not appear in the subscript it means that the constant is uniform in $j$. The precise meaning of these objects may vary between lines.

\section{Construction of background metrics} \label{S3}

\subsection{Weak conical approximation at infinity} We denote by $r=d(O, \cdot)$ the radial function on the asymptotic Calabi-Yau cone $\mathcal C$, by $\mathcal Q_R$ the subset $\{r\geqslant R\}$ and by $A_{a, b}$ the annulus $\{a\leqslant r\leqslant b\}$. The following lemma is standard so we only give a sketch of proof. 
\begin{lemma}\label{l:asymptotic decay}
 There exist an $R_0>0$, a compact set $K\subset X$ and a diffeomorphism $\Phi: \mathcal Q_{R_0}\rightarrow X\setminus K$ such that for all $l\geqslant 0$, $\lim_{s\rightarrow\infty}\sup_{\p \mathcal Q_s}s^l\left (\left|\nabla^l_{g_\mathcal C}(\Phi^*g-g_{\mathcal C})\right|_{g_\mathcal C}+\left |\nabla^l_{g_\mathcal C}(\Phi^*J-J_{\mathcal C})\right |_{g_\mathcal C}\right)=0$. 
\end{lemma}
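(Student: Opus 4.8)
The plan is to upgrade the Gromov--Hausdorff convergence of the rescalings of $X$ to the asymptotic cone $\mathcal{C}$ to \emph{smooth} convergence on annuli at infinity, exploiting the two standing hypotheses (Euclidean volume growth and quadratic curvature decay) together with the uniqueness of the asymptotic cone from \cite{DS2, Liu} (and \cite{CM} on the Riemannian level), and then to patch the resulting near-identifications of dyadic annuli into a single diffeomorphism $\Phi$.

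First I would establish uniform smooth geometry at infinity. Quadratic curvature decay gives, on each rescaled region $(A_{s/2,\,2s},\,s^{-2}g)$, a uniform bound on the curvature; Euclidean volume growth and Bishop--Gromov give a uniform lower volume bound; since a Ricci-flat manifold with bounded curvature and non-collapsed volume has a uniform positive lower bound on its injectivity radius (no short geodesic loops on the end), the harmonic radius is bounded below, and elliptic bootstrapping on the Ricci-flat equation $\Delta g_{ij}=Q(g,\partial g)$ in harmonic coordinates yields uniform $C^\infty$ bounds on $s^{-2}g$, and likewise on the parallel tensors $J$ and $\Omega$. By Arzel\`a--Ascoli, every sequence $s_k\to\infty$ has a subsequence along which $(A_{s_k/2,\,2s_k},\,p,\,s_k^{-2}g,\,J,\,\Omega)$ converges in $C^\infty$ to an annulus in a smooth Ricci-flat K\"ahler cone carrying a parallel holomorphic volume form; the volume ratios of $X$, being monotone and bounded, converge as $s_k\to\infty$, so the limit has constant volume ratio, hence is a metric cone, which by \cite{DS2, Liu} must be $\mathcal{C}$. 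Therefore the convergence takes place without passing to a subsequence: there are a function $\varepsilon(s)\to 0$ and, for each large $s$, a diffeomorphism $\phi_s$ of $A_{1,2}\subset\mathcal{C}$ onto a neighborhood of $\partial B_g(p,s)\subset X$ with $\phi_s^*(s^{-2}g)$, $\phi_s^*J$ and $\phi_s^*\Omega$ within $\varepsilon(s)$ of $g_\mathcal{C}$, $J_\mathcal{C}$, $\Omega_\mathcal{C}$ in $C^\infty(A_{1,2})$ --- which is precisely the scale-invariant form of the asserted decay on one annulus.

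To produce a global $\Phi:\mathcal Q_{R_0}\to X\setminus K$, I would glue the rescaled maps $x\mapsto\phi_{2^kR_0}\!\big(x/(2^kR_0)\big)$ on the dyadic annuli using a partition of unity subordinate to an overlapping cover of $\mathcal Q_{R_0}$ by such annuli, carrying out the convex combination in normal coordinates of $g_\mathcal{C}$; on each overlap the relevant maps differ by a diffeomorphism of a fixed annulus of $\mathcal{C}$ that is $C^\infty$-close to the identity with scale-invariant modulus $\varepsilon$, so the glued map is still a diffeomorphism and the scale-invariant estimates survive with the loss of a fixed multiplicative constant. Reading these back on the original scale gives $\lim_{s\to\infty}\sup_{\partial\mathcal Q_s}s^l\big(|\nabla^l_{g_\mathcal C}(\Phi^*g-g_\mathcal C)|_{g_\mathcal C}+|\nabla^l_{g_\mathcal C}(\Phi^*J-J_\mathcal C)|_{g_\mathcal C}\big)=0$ for every $l$.

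The main obstacle is the bookkeeping in this last gluing step: one must verify that the $o(1)$ modulus is genuinely uniform over all dyadic scales and that correcting an identification on one annulus does not spoil the bounds on the neighboring one. I would stress that this argument yields only an $o(1)$ --- a priori merely logarithmic --- scale-invariant rate; the polynomial rate \eqref{e:polynomial rate} of Theorem \ref{t:main2} is genuinely stronger and is obtained only from the analysis in the body of the paper, so it is not asserted here. A minor technical point to dispatch en route is that $d_g(p,\cdot)$ has no cut locus outside a compact set --- so that $\partial B_g(p,s)$ is a smooth hypersurface for all large $s$ --- which follows from the asymptotically conical structure of the end obtained above.
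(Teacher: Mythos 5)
Your first half is sound and, if anything, more self-contained than the paper's one-line appeal to ``$\mathcal C$ is the unique asymptotic cone'': quadratic curvature decay gives bounded curvature on the rescaled annuli, Euclidean volume growth and Bishop--Gromov give non-collapsing, Cheeger's lemma gives a uniform injectivity radius lower bound, and elliptic bootstrapping in harmonic coordinates upgrades Gromov--Hausdorff convergence to smooth convergence of the dyadic annuli; the monotone, bounded volume ratio forces the smooth limit to be a cone, which by \cite{DS2, Liu} must be $\mathcal C$. All of that is correct and is what the paper implicitly invokes.

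The gap is in the gluing, and it is exactly the subtlety the paper's sketch is designed to handle. You assert that on each overlap the two annular identifications $\phi_{s}$ and $\phi_{s'}$ ``differ by a diffeomorphism of a fixed annulus of $\mathcal C$ that is $C^\infty$-close to the identity.'' This is not justified and is false in general. The maps $\phi_s$ produced by the compactness argument are canonical only up to an approximate isometry of the model annulus in $\mathcal C$, and the isometry group of an annulus $A_{1,2}\subset\mathcal C$ preserving $r$ is nontrivial (it contains at least the Reeb flow). Thus the transition map $\phi_{s'}^{-1}\circ\phi_s$ (suitably rescaled) is a priori only close to \emph{some} holomorphic isometric embedding $\Psi$ with $\Psi^*r=r$, not to the identity; the ``rotational'' part of $\Psi$ can be $O(1)$ and need not converge as $s\to\infty$. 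If you naively take a partition-of-unity convex combination of two maps that differ by a large rotation, the result may not even be a local diffeomorphism, and the scale-invariant estimates certainly do not survive.

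The missing idea, which is the actual content of the paper's proof, is that each such $\Psi_i$ preserves the Reeb vector field $\xi=J_{\mathcal C}(r\partial_r)$ (since it is holomorphic, isometric, and preserves $r$ on an annulus), and therefore extends to a \emph{global} holomorphic isometry of $\mathcal C$ fixing the vertex. One then replaces $\Phi_{i+1}$ by $\Phi_{i+1}\circ\Psi_i\circ\cdots\circ\Psi_{i_0}$, so that after this correction the consecutive transition maps really are $o(1)$-close to the identity, at which point the interpolation you describe (convex combination in normal coordinates, or any equivalent device) goes through. Without this iterative correction for the accumulated drift, the gluing step does not close.

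Two smaller remarks. First, your observation that the argument only yields an $o(1)$ scale-invariant rate, not the polynomial rate of Theorem \ref{t:main2}, is correct and worth keeping. Second, the aside about $\partial B_g(p,s)$ being a smooth hypersurface for large $s$ is unnecessary for the statement as formulated: the lemma identifies the end of $X$ with $\mathcal Q_{R_0}$ via $\Phi$, and the sup in the conclusion is over level sets $\partial\mathcal Q_s$ of the cone's radial function (pulled back by $\Phi$), not over metric spheres of $g$.
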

\begin{proof}
Since $\mathcal C$ is the unique asymptotic cone of $X$, for $i$ large we may find a diffeomorphism $\Phi_i$ from $A_{4^{i}, 8\cdot 4^{i}}$ onto $B(p, 8\cdot 4^i(1+\epsilon_i))\setminus B(p, 4^i(1+\epsilon_i))$ such that  $$4^{il}\sup_{A_{4^i, 8\cdot 4^i}}\left(\left|\nabla^l_{g_\mathcal C}(\Phi_i^*g-g_{\mathcal C})\right|_{g_\mathcal C}+\left|\nabla^l_{g_\mathcal C}(\Phi_i^*J-J_{\mathcal C})\right|_{g_\mathcal C}\right)\leqslant\epsilon_{i,l},$$ where $\epsilon_i\rightarrow0$ and for each $l$ fixed, $\epsilon_{i, l}\rightarrow 0$ as $i\rightarrow\infty$. Moreover, the rescaled maps $\Phi_i\circ 4^i$ realize the Gromov-Hausdorff convergence to the cone $\mathcal C$.  It suffices to glue together $\Phi_i$'s to get the desired $\Phi$. This can done in a  straightforward way. By a contradiction argument one can prove that for $i\geq i_0$, in the intersection  $\text {Im}(\Phi_i)\cap\text{Im}	(\Phi_{i+1})$, the composition $4^{-i-1}\circ \Phi_{i+1}^{-1}\circ \Phi_i\circ 4^{i+1}$ is $\epsilon_i$ (for $\epsilon_i\rightarrow 0$) close to a holomorphic isometric embedding $\Psi_i$ of the annulus $A_{1,2}$ into itself with $\Psi_i^*r=r$. It follows that $\Psi_i$ preserves the Reeb vector field on $\mathcal C$, hence it extends to a global holomorphic isometry of $\mathcal C$ which fixes the vertex. Then we replace $\Phi_{i+1}$ by $\Phi_{i+1}\circ \Psi_i\circ\cdots\circ \Psi_{i_0}$ and perform an obvious interpolation between $\Phi_i$ and $\Phi_{i+1}$.
\end{proof}

Notice that at this point  we know the metric $\Phi^*g$ is asymptotic to $g_{\mathcal C}$ at infinity, but without any quantitative \emph{rate} of decay. Nevertheless, the difference of the Levi-Civita connections does decay at the order $o(r^{-1})$. The general result of Colding-Minicozzi \cite{CM} yields a logarithmic decay  rate of the Riemannian metric but we will not need it in this paper.  Using the diffeomorphism constructed above, we may naturally view $r$ as a function on the end of $X$ which satisfies
$$C^{-1} d_g(p, \cdot)\leqslant r(\cdot)\leqslant C d_g(p, \cdot).$$
Given a constant $\widetilde R> R_0+1$, we can further replace $r$ by the regularized maximum $\widetilde{\max}\{r,\widetilde R\}$ (see \cite[Chapter 1,Lemma 5.18]{Demailly}), 
so it can be viewed as a smooth function on $X$. Moreover, by Lemma \ref{l:asymptotic decay}, we can fix an  $\widetilde R$ large so that $r^2$ is a smooth plurisubharmonic exhaustion function on $X$ which is strictly plurisubharmonic outside a compact set. Then we know that for any sufficiently large number $R$, the sublevel set 
\begin{equation}\label{sublevel set}
    X_R :=\{r < R\}
\end{equation}
is 1-convex. By the Grauert-Riemenschneider vanishing theorem \cite{GR} and triviality of the canonical bundle $K_X$, we get
\begin{equation}\label{vanishing of H^1}
    H^1(X_R,\mathcal O)=H^1(X_R,K_{X_R})=0.
\end{equation}
This will be used in the proof of Theorem \ref{decomposition of kahler forms}.
\subsection{Weighted elliptic estimates}
Here we collect some facts on weighted elliptic theory for  ends of Riemannian manifolds asymptotic to cones. These are not sharp results but suffice for our applications in this paper. 

 For any $k\in \mathbb Z_{\geqslant 0}$, $\alpha\in (0, 1)$, $\delta\in \mathbb R$, $R>0$, we define  the Banach space $C^{k, \alpha}_{\delta}(\mathcal Q_R)$ using the following weighted norm on a function $f$ defined on $\mathcal Q_R\subset\mathcal C$:
\begin{align*}
\|f\|_{C_{\delta}^{k, \alpha}(\mathcal \mathcal Q_R)}& \equiv \sup_{s\geqslant R}\Big\{ \sum\limits_{m=0}^ks^{-\delta+m} \|\nabla_{g_{\mathcal C}}^m f\|_{C^0_{g_\mathcal C}(A_{s,2s})} +  s^{-\delta+k+\alpha}[f]_{C^{k,\alpha}_{g_{\mathcal C}}(A_{s,2s})}\Big\},\end{align*}
where 
\begin{align*}
[f]_{C^{k,\alpha}(A_{s,2s})} \equiv \sup\limits_{y_1, y_2\in A_{s,2s}, d_{g_\mathcal C}(y_1,y_2)<\text{inj}_{g_\mathcal C}(y_1)}\bigg\{\frac{|\nabla^k_{g_\mathcal C}f(y_1)-\nabla^k_{g_\mathcal C}f(y_2)|}{d_{g_\mathcal C}(y_1,y_2)^\alpha} \bigg\}.
\end{align*}
Fix $k\geqslant 2n+1$ and $\alpha\in (0, 1)$.   
 The following can be proved in exactly the same way as \cite[Proposition 6.7]{SZ}, so we omit the proof here. 
\begin{lemma}\label{p:weighted-esimate-quotient-space}
 There exists a finite set $\Gamma\subset(0, 1)$ such that for all $\delta\in (0, 1)\setminus \Gamma$ and for all $R\geqslant 1$, one can find a bounded linear map $\mathcal S_R:  C^{k, \alpha}_{-\delta}(\mathcal Q_R)\to C^{k+2, \alpha}_{-\delta+2}(\mathcal Q_R)$ with  $\Delta_{g_\mathcal C}\circ \mathcal S_R=\Id$ and with the operator norm $\|\mathcal S_R\|\leqslant C$ for some $C$ depending only on $\mathcal C$, $\delta$, $k$, $\alpha$ (but not on $R$). 
\end{lemma}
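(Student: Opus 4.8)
The plan is to use the dilation symmetry of $\mathcal C$ to reduce to the case $R=1$, and then to solve $\Delta_{g_{\mathcal C}}u=f$ on $\mathcal Q_1$ by separating variables in the eigenfunctions of the link $L$. For the reduction I will introduce the radial dilation $\sigma_R\colon\mathcal Q_1\to\mathcal Q_R$ (multiplication by $R$ in the $r$-variable), for which $\sigma_R^*g_{\mathcal C}=R^2 g_{\mathcal C}$, so that $\Delta_{g_{\mathcal C}}(f\circ\sigma_R)=R^2(\Delta_{g_{\mathcal C}}f)\circ\sigma_R$ and $\|f\circ\sigma_R\|_{C^{k,\alpha}_{\beta}(\mathcal Q_1)}=R^{\beta}\|f\|_{C^{k,\alpha}_{\beta}(\mathcal Q_R)}$ for every weight $\beta$. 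Then, given a right inverse $\mathcal S_1$ on $\mathcal Q_1$, the operator $\mathcal S_R(f):=R^2\,(\mathcal S_1(f\circ\sigma_R))\circ\sigma_R^{-1}$ will be a right inverse on $\mathcal Q_R$, and—because the target weight $-\delta+2$ exceeds the source weight $-\delta$ by exactly $2$—the powers of $R$ will cancel, giving $\|\mathcal S_R\|\leqslant\|\mathcal S_1\|$ independently of $R$. So everything reduces to constructing $\mathcal S_1$.

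For $R=1$ I would write $g_{\mathcal C}=dr^2+r^2 g_L$ on $\mathcal C\setminus\{O\}\cong\R_{>0}\times L$, take the eigenvalues $0=\mu_0<\mu_1\leqslant\mu_2\leqslant\cdots\to\infty$ of $\Delta_{g_L}$ with an $L^2$-orthonormal basis $\{\phi_i\}$ of eigenfunctions, and use that $\Delta_{g_{\mathcal C}}=\partial_r^2+\frac{2n-1}{r}\partial_r+\frac1{r^2}\Delta_{g_L}$. Expanding $f=\sum_i f_i(r)\phi_i$, the equation $\Delta_{g_{\mathcal C}}u=f$ decouples into the Euler-type ODEs $u_i''+\frac{2n-1}{r}u_i'-\frac{\mu_i}{r^2}u_i=f_i$ on $\{r\geqslant1\}$, with homogeneous solutions $r^{\gamma_i^{\pm}}$, $\gamma_i^{\pm}=-(n-1)\pm\sqrt{(n-1)^2+\mu_i}$. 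For each $i$ I will build a particular solution $u_i$ by variation of parameters, choosing the limits of integration so that $u_i=O(r^{-\delta+2})$: the decaying mode $r^{\gamma_i^-}$ (note $\gamma_i^-<0$) is always admissible, while the growing mode $r^{\gamma_i^+}$ must be removed exactly when $\gamma_i^+>-\delta+2$, which the improper integral $\int_r^{\infty}$ accomplishes automatically. Since $\gamma_i^+\nearrow+\infty$, only finitely many $\gamma_i^+$ (namely those with $\mu_i\in(2n-1,4n)$) lie in $(1,2)$, and these define the finite exceptional set $\Gamma:=\{\,2-\gamma_i^+:\gamma_i^+\in(1,2)\,\}\subset(0,1)$; for $\delta\in(0,1)\setminus\Gamma$ one has $-\delta+2\notin\{\gamma_i^{\pm}\}_i$, so no resonant $r^{-\delta+2}\log r$ term arises and one obtains mode-wise estimates $\|u_i\|\lesssim C_i\|f_i\|$ in the relevant weighted norms with $C_i$ bounded and decaying in $i$ (the $\mu_i/r^2$ term dominates for large $i$). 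Summing, $u=\sum_i u_i\phi_i$ should converge in a weighted $L^2$ sense and, by elliptic regularity, yield a bounded operator $\mathcal S_1$ with $\Delta_{g_{\mathcal C}}\circ\mathcal S_1=\Id$.

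Finally I would upgrade the resulting weighted $L^2$-type bound on $u=\mathcal S_1 f$ to $\|u\|_{C^{k+2,\alpha}_{-\delta+2}(\mathcal Q_1)}\leqslant C\|f\|_{C^{k,\alpha}_{-\delta}(\mathcal Q_1)}$ by applying interior Schauder estimates on each dyadic annulus $A_{s,2s}$ after rescaling it to unit size; the rescaled geometry is a fixed compact piece of the exact cone, so the Schauder constant is independent of $s$, which is precisely what the weighted norms encode. This step, together with the details of the mode-by-mode analysis, is to be carried out exactly as in \cite[Proposition 6.7]{SZ}. The hard part will be the ODE analysis for the finitely many low modes: one must verify carefully that the \emph{only} values of $\delta\in(0,1)$ for which the particular solution of some mode is forced to contain a logarithmic (hence inadmissible) term are the elements of $\Gamma$, so that for $\delta\in(0,1)\setminus\Gamma$ the constructed $u_i$ genuinely lie in the target weighted space; once this is in place, uniformity of the operator norm in $R$ comes for free from the scaling step.
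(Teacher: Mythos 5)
Your proposal is correct and matches the paper's (implicit) approach: the paper offers no proof, simply deferring to \cite[Proposition 6.7]{SZ}, and your sketch — the radial-dilation reduction to $R=1$ (with the observation that the weight shift by exactly $2$ makes the operator norm $R$-independent), separation of variables in the eigenfunctions of the link, the identification of $\Gamma$ with the finitely many shifted indicial roots $2-\gamma_i^+\in(0,1)$, and the final Schauder bootstrap over rescaled dyadic annuli — is precisely the standard argument that reference carries out. The only points to tighten when writing out details are (i) the sign convention on $\Delta_{g_L}$ (you want $\Delta_{g_L}\phi_i=-\mu_i\phi_i$ with $\mu_i\geqslant 0$ so that the radial ODE comes out as you wrote it), and (ii) the passage from mode-wise bounds to a global bound should go through weighted $L^2$ norms summed via Parseval rather than mode-wise sup-norm estimates, but you already flag both when deferring to \cite{SZ}.
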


Given any $R_1>1$ and suppose  there is another Riemannian metric $g$ on $\mathcal Q_{R_1}$ such that for all $l\geqslant 0$ and $s\geqslant R_1$,
\begin{equation}\label{new riemannian metric}
    s^l\sup_{\p \mathcal Q_s}|\nabla^l_{g_{\mathcal C}} (g-g_{\mathcal C})|\leqslant e_l(s)
\end{equation}
 for some function $e_l$ defined on $[R_1, \infty)$  with $\lim_{s\rightarrow \infty}e_l(s)=0$.

\begin{proposition} \label{p:Poisson equation general}
Given $\delta\in (0,1)\setminus \Gamma$, there exist $R_2\geqslant R_1$ and $C>0$ depending only on $\mathcal C$, $R_1$, $k, \delta, \alpha$ and the functions $e_1,\cdots, e_{k+2}$, and a bounded linear map $\mathcal T: C^{k,\alpha}_{-\delta}(\mathcal Q_{R_2})\to C^{k+2, \alpha}_{-\delta+2}(\mathcal Q_{R_2})$ such that $\Delta_{g}\circ \mathcal T=\Id$ and $\|\mathcal T\|\leqslant C$. \end{proposition}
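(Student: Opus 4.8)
The plan is to regard $\Delta_g$ as a perturbation of the cone Laplacian and invert it by a Neumann series built on the model right inverse $\mathcal S_R$ supplied by Lemma \ref{p:weighted-esimate-quotient-space}. Set $E:=\Delta_g-\Delta_{g_{\mathcal C}}$ on $\mathcal Q_{R_1}$. Since $e_0(s)\to 0$, for $R_2$ large enough we have $\tfrac12 g_{\mathcal C}\le g\le 2g_{\mathcal C}$ on $\mathcal Q_{R_2}$, so there $g$ is uniformly elliptic and $g^{-1}$ is uniformly comparable to $g_{\mathcal C}^{-1}$. Taking $\nabla:=\nabla_{g_{\mathcal C}}$ as reference connection, one has the exact pointwise identity
\[
Eu=(g^{ij}-g_{\mathcal C}^{ij})\,\nabla_i\nabla_j u-g^{ij}A_{ij}^k\,\nabla_k u,\qquad A_{ij}^k=\tfrac12 g^{kl}\big(\nabla_i g_{jl}+\nabla_j g_{il}-\nabla_l g_{ij}\big),
\]
so $E$ is a second order operator whose coefficients are built only from $g^{-1}-g_{\mathcal C}^{-1}$ and $\nabla(g-g_{\mathcal C})$ (using $\nabla g_{\mathcal C}=0$), contracted with the bounded tensor $g^{-1}$; in particular both coefficient blocks decay by virtue of \eqref{new riemannian metric}.

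The crux is the weighted mapping estimate
\[
\|E\|_{C^{k+2,\alpha}_{-\delta+2}(\mathcal Q_R)\to C^{k,\alpha}_{-\delta}(\mathcal Q_R)}\ \le\ C_0\,\eta(R),\qquad \eta(R):=\sup_{s\ge R}\ \sum_{l=0}^{k+2}e_l(s),
\]
with $C_0$ depending only on $\mathcal C,k,\delta,\alpha$. To see this one works on each dyadic annulus $A_{s,2s}$ (over which the weighted norms are defined). Distributing a covariant derivative $\nabla^m$, $m\le k$, of $Eu$ by the Leibniz rule, suppose $m_1$ derivatives land on a coefficient and the remaining $m-m_1$ (plus the two, resp.\ one, intrinsic to $E$) land on $u$. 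By \eqref{new riemannian metric} the coefficient factor is bounded by $s^{-m_1}e_{m_1}(s)$ for the second order part and by $s^{-(m_1+1)}e_{m_1+1}(s)$ for the first order part; since $u\in C^{k+2,\alpha}_{-\delta+2}$, the $u$-factor is bounded by $\lesssim s^{-\delta+2-(m-m_1+2)}\|u\|$, resp.\ one derivative fewer. In every case the powers of $s$ combine to exactly $s^{-\delta-m}\|u\|$, so after multiplying by the weight $s^{\delta+m}$ built into the target norm one is left with a factor $e_l(s)$ with $l\le m+1\le k+1$; the H\"older seminorm at order $k$ costs one further index, producing $e_{k+2}$, and the ellipticity/comparability of $g$ uses only $e_0\to 0$. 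Taking $\sup_{s\ge R}$ gives the displayed bound, and $\eta(R)\to 0$ as $R\to\infty$ because each $e_l(s)\to 0$. Note that $E$ gains \emph{no} decay in the weight exponent (its coefficients are only $o(1)$, not $o(r^{-\varepsilon})$) — what it gains is the small constant $\eta(R)$.

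Now fix $\delta\in(0,1)\setminus\Gamma$; let $C_1=C_1(\mathcal C,\delta,k,\alpha)$ be the uniform bound on $\|\mathcal S_R\|$ from Lemma \ref{p:weighted-esimate-quotient-space}, and choose $R_2\ge\max(R_1,1)$ so large that $C_0C_1\,\eta(R_2)\le\tfrac12$. Then $\mathcal S_{R_2}\colon C^{k,\alpha}_{-\delta}(\mathcal Q_{R_2})\to C^{k+2,\alpha}_{-\delta+2}(\mathcal Q_{R_2})$ and $E\circ\mathcal S_{R_2}$ maps $C^{k,\alpha}_{-\delta}(\mathcal Q_{R_2})$ to itself with operator norm $\le\tfrac12$, while $\Delta_{g_{\mathcal C}}\circ\mathcal S_{R_2}=\Id$ yields $\Delta_g\circ\mathcal S_{R_2}=\Id+E\circ\mathcal S_{R_2}$. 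Hence $\Id+E\circ\mathcal S_{R_2}$ is invertible on $C^{k,\alpha}_{-\delta}(\mathcal Q_{R_2})$ with inverse of norm $\le 2$ (Neumann series), and
\[
\mathcal T:=\mathcal S_{R_2}\circ\big(\Id+E\circ\mathcal S_{R_2}\big)^{-1}
\]
satisfies $\Delta_g\circ\mathcal T=(\Id+E\circ\mathcal S_{R_2})\circ(\Id+E\circ\mathcal S_{R_2})^{-1}=\Id$ and $\|\mathcal T\|\le 2C_1=:C$, with $R_2$ and $C$ depending only on the stated data. The only genuine work is the weighted estimate for $E$; the rest is the routine perturbation argument, and the single bookkeeping obstacle is checking that the powers of $s$ balance after all product-rule terms so that $E$ indeed maps $C^{k+2,\alpha}_{-\delta+2}\to C^{k,\alpha}_{-\delta}$ with a constant that can be pushed below $\|\mathcal S_{R_2}\|^{-1}$ by enlarging $R_2$.
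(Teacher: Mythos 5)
Your proposal is correct and follows essentially the same route as the paper: write $\Delta_g=\Delta_{g_{\mathcal C}}+E$ with $E$ a second-order operator whose coefficients are built from $g^{-1}-g_{\mathcal C}^{-1}$ and $\nabla_{g_{\mathcal C}}g$, use \eqref{new riemannian metric} to show $\|E\|_{C^{k+2,\alpha}_{-\delta+2}\to C^{k,\alpha}_{-\delta}}$ is small once $R$ is large, and invert via the right inverse $\mathcal S_R$ of Lemma \ref{p:weighted-esimate-quotient-space}. The only difference is presentational: the paper compresses the Neumann-series inversion into the phrase ``standard functional analysis,'' whereas you write it out explicitly and carry out the dyadic-annulus bookkeeping for the weighted operator norm of $E$.
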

\begin{proof}
One can write $\Delta_{g}=\Delta_{g_\mathcal C}+(g-g_{\mathcal C})*\nabla^2_{g_\mathcal C}+\nabla_{g_{\mathcal C}}g*\nabla_{g_\mathcal C}$. Then on $\mathcal{Q}_R$, $R\geqslant R_1$, we have
$$\|\Delta_g-\Delta_{g_\mathcal C}\|\leqslant C  \sup_{s\geqslant R, \ 0\leqslant l \leqslant k+2}e_l(s).$$
The right hand side is small when $R\gg1$. Then the conclusion follows from standard functional analysis. \end{proof}

The following is an application of the standard elliptic regularity by a straightforward rescaling argument. We omit the proof. 

\begin{proposition}\label{c:elliptic regularity} Let $g$ be a Riemannian metric on $\mathcal{Q}_{R_1}$ satisfying \eqref{new riemannian metric}.
	Suppose $u$ is a function defined on $\mathcal Q_{R_1}$ such that  for some $s, s'\in \mathbb R$ and for all $l\geqslant0$, $\left|\nabla^l_g(\Delta_gu)\right|_g\leqslant C_l r^{-l-2+s}$, and $\int_{\mathcal{Q}_{R_1}}u^2r^{-2s'-2n}d\operatorname{Vol}_g\leqslant \hat C$. Then for any $l>0$ and $\gamma>\max(s, s')$,  there exist constants $A_{\gamma l}>0$ depending only on $\gamma,\mathcal C$, $R_1$, $C_0, C_1, \cdots, C_l, \hat C$ and the functions $e_1,\cdots, e_{l}$, such that
	\begin{equation*}
	    	\left|\nabla_g^{l}u\right|_g\leqslant A_{\gamma l} r^{\gamma-l} \text{ on $\mathcal{Q}_{R_1}$}.
	\end{equation*}
\end{proposition}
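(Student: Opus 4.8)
The plan is to run the standard \emph{annular rescaling} argument: on each dyadic annulus $A_{\rho,2\rho}$ one rescales the metric to unit size, applies interior elliptic estimates on a \emph{fixed} annulus to a family of metrics of uniformly bounded geometry, and then scales back. Since the statement is purely Riemannian --- only $\Delta_g$ enters --- this is just local elliptic regularity, the one point to watch being uniformity in $\rho$, for which the hypothesis \eqref{new riemannian metric} is tailor-made.

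\textbf{The rescaled metrics.} Fix $l>0$ and choose $k\geq l$. For $\rho$ large I would set $\tilde g_\rho:=\rho^{-2}g$ on $A_{\rho/2,4\rho}$, identified with the fixed annulus $A_{1/2,4}\subset\mathcal C$ via the dilation $x\mapsto\rho^{-1}x$ of the cone, under which $\rho^{-2}g_{\mathcal C}$ becomes $g_{\mathcal C}$ itself. A constant rescaling leaves the Levi-Civita connection unchanged, and the $\tilde g_\rho$-norm of a $(0,m)$-tensor is $\rho^{m}$ times its $g$-norm; hence on $A_{1/2,4}$ one has $|\nabla^m_{g_{\mathcal C}}(\tilde g_\rho-g_{\mathcal C})|_{g_{\mathcal C}}=\rho^{m}|\nabla^m_{g_{\mathcal C}}(g-g_{\mathcal C})|_{g_{\mathcal C}}$, which by \eqref{new riemannian metric} is $\leq C\sup_{s\geq\rho/2}e_m(s)\to 0$ as $\rho\to\infty$. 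Thus $\tilde g_\rho\to g_{\mathcal C}$ in $C^k(A_{1/2,4})$, so there is $R_2\geq R_1$, depending only on $\mathcal C$, $R_1$ and $e_1,\dots,e_k$, such that for $\rho\geq R_2$ the $\tilde g_\rho$ are uniformly elliptic with $C^k$-bounded coefficients, and interior elliptic estimates for $\Delta_{\tilde g_\rho}$ on $A_{1,2}\Subset A_{1/2,4}$ hold with constants independent of $\rho$.

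\textbf{Transporting the data and concluding.} Writing $f=\Delta_gu$ and using $\Delta_{\tilde g_\rho}=\rho^{2}\Delta_g$ together with the tensor scaling, on $A_{1,2}$ --- i.e.\ $A_{\rho,2\rho}$ before rescaling, where $r\asymp\rho$ --- one gets $|\nabla^m_{\tilde g_\rho}(\Delta_{\tilde g_\rho}u)|_{\tilde g_\rho}=\rho^{m+2}|\nabla^m_gf|_g\leq C\rho^{m+2}\cdot\rho^{-m-2+s}=C\rho^{s}$ for every $m\geq0$, and similarly for Hölder seminorms, so $\|\Delta_{\tilde g_\rho}u\|_{C^{l-2,\alpha}(A_{1/2,4},\tilde g_\rho)}\leq C\rho^{s}$; and since $d\operatorname{Vol}_g=\rho^{2n}\,d\operatorname{Vol}_{\tilde g_\rho}$ while $r\asymp\rho$ on $A_{\rho/2,4\rho}$, restricting $\int_{\mathcal Q_{R_1}}u^2r^{-2s'-2n}\,d\operatorname{Vol}_g\leq\hat C$ to that annulus gives $\|u\|_{L^2(A_{1/2,4},\tilde g_\rho)}\leq C\rho^{s'}$. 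Interior elliptic regularity for $\Delta_{\tilde g_\rho}$ then yields, with constants uniform in $\rho\geq R_2$,
\[
\|u\|_{C^{l,\alpha}(A_{1,2},\tilde g_\rho)}\leq C\big(\|u\|_{L^2(A_{1/2,4},\tilde g_\rho)}+\|\Delta_{\tilde g_\rho}u\|_{C^{l-2,\alpha}(A_{1/2,4},\tilde g_\rho)}\big)\leq C(\rho^{s}+\rho^{s'}).
\]
In particular $|\nabla^l_{\tilde g_\rho}u|_{\tilde g_\rho}\leq C(\rho^{s}+\rho^{s'})$ on $A_{1,2}$, so scaling back $|\nabla^l_gu|_g=\rho^{-l}|\nabla^l_{\tilde g_\rho}u|_{\tilde g_\rho}\leq C\rho^{-l}(\rho^{s}+\rho^{s'})$ on $A_{\rho,2\rho}$; since $\rho\leq r\leq2\rho$ there, $\rho\geq R_2\geq1$ and $\gamma>\max(s,s')$, this is $\leq A_{\gamma l}\,r^{\gamma-l}$ for a constant $A_{\gamma l}$ with the asserted dependencies. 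Letting $\rho$ range over $[R_2,\infty)$ covers $\mathcal Q_{R_2}$, and the fixed collar $\{R_1\leq r\leq R_2\}$ --- where $r^{\gamma-l}$ is bounded below --- is handled by the same interior estimates and contributes only to the constant.

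I do not expect a serious obstacle. The two points needing care are (i) the scaling bookkeeping --- how $\nabla^m$, $\Delta_g$, $d\operatorname{Vol}_g$ and the Hölder seminorms transform under $g\mapsto\rho^{-2}g$ --- and (ii) verifying that $\{\tilde g_\rho\}_{\rho\geq R_2}$ genuinely has bounded geometry uniformly in $\rho$, which is exactly what the decay $e_m(s)\to0$ in \eqref{new riemannian metric} gives; the functions $e_1,\dots,e_l$ entering the final constant are precisely those needed for the $C^{l,\alpha}$ interior estimate for $\Delta_{\tilde g_\rho}$.
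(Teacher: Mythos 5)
Your argument is correct and is precisely the ``standard elliptic regularity by a straightforward rescaling argument'' that the paper invokes and explicitly omits: dyadic rescaling $\tilde g_\rho=\rho^{-2}g$ identified with a fixed annulus, uniform $C^k$-closeness to $g_{\mathcal C}$ via \eqref{new riemannian metric}, transport of the $L^2$ and $C^{l-2,\alpha}$ data with the exact weight exponents $\rho^{s'}$ and $\rho^{s}$, interior estimates uniform in $\rho$, and scaling back. The bookkeeping ($|\nabla^m\cdot|$ scales by $\rho^m$, $\Delta$ by $\rho^2$, $d\operatorname{Vol}$ by $\rho^{2n}$) and the role of $\gamma>\max(s,s')$ and of the functions $e_1,\dots,e_l$ are all accounted for correctly.
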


\subsection{Rough algebraic approximation of the Calabi-Yau metric}
  We continue to use the notations in Section \ref{two step degeneration}. Recall that $X'$ is the affine variety defined by the ring $R(X)$ of holomorphic functions on X with polynomial growth and we identify it with its image in $\mathbb C^N$ under the holomorphic embedding $\Phi_1$. We also have the crepant resolution map $\pi: X\rightarrow X'\subset \mathbb C^N$. In the following outside a compact $K$ of $X$ containing the exceptional set of $\pi$, we also identify $X$ with $X'$. In particular, $X\setminus K$ is holomoprhically embedded in $\mathbb C^N$.
  
  Since the coordinate functions on $\C^N$ have positive weight with respect to $\xi$, we can find a K\"ahler cone metric $\omega_\xi$ on $\C^N$ with Reeb vector field $\xi$. Fix a choice of such  $\omega_\xi$. Then we have $\Lambda^*\omega_\xi=\lambda^{2}\omega_\xi$.  Denote by $r_\xi=d_{\omega_\xi}(0, \cdot)$ the radial function on $\C^N$ defined by $\omega_\xi$. Notice we are not assuming $\omega_{\xi}$ is Calabi-Yau. The following result shows that we may view the restriction of $\omega_\xi$ to $X$ as a rough approximation of the unknown Calabi-Yau metric $\omega$.
 \begin{proposition}\label{rough estimate}
There exists a compact set $K\subset X$ such that for all $\delta>0$ and $k\geqslant 1$, on $X\setminus K$ we have	\begin{equation}\label{e:rough comparison} C_\delta^{-1} r^{-\delta}\omega\leqslant \omega_\xi\leqslant C_\delta r^{\delta}\omega, 
 \end{equation}
 \begin{equation}\label{eqn4.3}\left|\nabla^k_{\omega}\omega_{\xi}\right|_{\omega}\leqslant C_{\delta,k} r^{\delta-k}, 
 \end{equation}
 \begin{equation}\label{e:rough comparison of distance}
 C_\delta^{-1} r^{1-\delta}\leqslant r_\xi\leqslant C_\delta r^{1+\delta}.	
\end{equation}
 \end{proposition}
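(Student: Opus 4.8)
The plan is to establish \eqref{e:rough comparison}, \eqref{eqn4.3}, and \eqref{e:rough comparison of distance} by leveraging the weak conical approximation of Lemma \ref{l:asymptotic decay} together with the fact that both $\omega$ and $\omega_\xi$ are, up to the embedding $\Phi_1$, asymptotically comparable to cone metrics whose Reeb fields are commensurable. First I would work entirely on the end $X \setminus K$, which via $\Phi$ we identify with $\mathcal{Q}_{R_0} \subset \mathcal{C}$, so that $g$ is close to $g_{\mathcal C} = dr^2 + r^2 g_L$ with only $o(r^{-1})$ control on the connection but $o(1)$ control on the metric itself (Lemma \ref{l:asymptotic decay}). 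The key observation is that under the embedding $\Phi_\infty \colon \mathcal{C} \hookrightarrow \C^N$, the Reeb vector field $\xi$ is a linear diagonal field with positive weights $d_j$; thus the pullback of the Euclidean-type radial function $r_\xi^2$ to $\mathcal{C}$ is a positive function homogeneous of degree $2$ under the flow of $r\partial_r$ only after averaging — more precisely, along the cone $\mathcal{C}$, $r_\xi$ is bounded above and below by constant multiples of $r^{d_{\min}/\text{(normalization)}}$ and $r^{d_{\max}/\text{(normalization)}}$, but since $\xi = J_{\mathcal C}(r\partial_r)$ forces the weights to be consistent with the single scaling parameter $\lambda$, one in fact gets that $r_\xi$ restricted to $\mathcal{C}$ is comparable to $r$ up to constants, and hence $C^{-1} r^{1-\delta} \le r_\xi \le C r^{1+\delta}$ follows immediately — actually even with $\delta = 0$ on $\mathcal{C}$ itself, the $\delta$ appearing only to absorb the error between $X$ and $\mathcal{C}$. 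This gives \eqref{e:rough comparison of distance}.

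Next, for \eqref{e:rough comparison}: on the cone $\mathcal{C}$, the metric $\omega_\xi|_{\mathcal{C}}$ is a K\"ahler metric whose associated distance is comparable to $r_\xi \sim r$, and whose curvature and derivatives scale appropriately; comparing it with $\omega_{\mathcal C}$ on the compact link $L$ shows $C^{-1}\omega_{\mathcal C} \le \omega_\xi|_{\mathcal C} \le C\omega_{\mathcal C}$ away from the vertex on each fixed annulus, but since the scaling behaviors under $r\partial_r$ of $\omega_{\mathcal C}$ and $\omega_\xi|_{\mathcal C}$ may differ (the latter need not be a cone metric adapted to $r\partial_r$, only its Reeb field $\xi$ matches), one picks up at worst a factor of $r^{\pm \delta}$ — here $\delta$ can be taken arbitrarily small because the weights $d_j$ are all forced to lie in a bounded range around the normalizing constant by the requirement that $\xi$ generates a compact torus and $\mathcal C$ is a cone. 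I would make this precise by decomposing $r_\xi^2 = \sum |z_j|^2$-type expression pulled back to $\mathcal C$, writing each $|z_j|^2$ as $r^{2d_j}$ times a bounded function on $L$, and comparing with $r^2$; the slack is controlled by $\max_j |d_j - 1|$ which can be made small, or absorbed into $r^{\delta}$ for any $\delta > 0$. Transferring from $\mathcal{C}$ to $X$ via Lemma \ref{l:asymptotic decay} introduces only multiplicative errors tending to $1$, hence is harmless. For \eqref{eqn4.3}, the derivative bounds on $\omega_\xi$ measured in $\omega$: on $\mathcal{C}$, covariant derivatives of $\omega_\xi$ with respect to $g_{\mathcal C}$ satisfy $|\nabla_{g_{\mathcal C}}^k \omega_\xi|_{g_{\mathcal C}} \le C_k r^{\delta - k}$ by the scaling/homogeneity argument again, and switching from $\nabla_{g_{\mathcal C}}, |\cdot|_{g_{\mathcal C}}$ to $\nabla_\omega, |\cdot|_\omega$ costs controlled factors because the Christoffel symbols of $g$ relative to $g_{\mathcal C}$ decay like $o(r^{-1})$ (the $l=1$ case of Lemma \ref{l:asymptotic decay}) and higher derivatives of $g - g_{\mathcal C}$ decay like $o(r^{-l})$; an induction on $k$ converting $\nabla_{g_{\mathcal C}}$-derivatives to $\nabla_g$-derivatives then yields \eqref{eqn4.3}.

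The main obstacle I anticipate is the bookkeeping in \eqref{eqn4.3}: one must carefully track that each conversion between the two connections $\nabla_{g}$ and $\nabla_{g_{\mathcal C}}$, and each conversion between the two pointwise norms, introduces only errors that are $O(r^{\delta})$-bounded (in fact $o(1)$-perturbations) rather than accumulating into a genuine loss of decay order. This requires a clean inductive statement — something like: if a tensor $T$ satisfies $|\nabla_{g_{\mathcal C}}^m T|_{g_{\mathcal C}} \le C_m r^{a - m}$ for all $m \le k$, then $|\nabla_g^m T|_g \le C'_m r^{a+\delta-m}$ for all $m \le k$ — proved by expanding $\nabla_g = \nabla_{g_{\mathcal C}} + (g^{-1}\nabla_{g_{\mathcal C}} g) * (\cdot)$ and using that the correction term and its $g_{\mathcal C}$-derivatives decay strictly faster than the leading scaling. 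The other, more conceptual point requiring care is pinning down that the weights $d_j$ can be normalized so that $r_\xi$ is genuinely comparable to $r$ on $\mathcal C$ up to $r^{\pm\delta}$ for every $\delta > 0$; this follows from the fact that $\Phi_\infty$ realizes $\mathcal{C}$ as an affine cone on which the Euclidean radial function and the intrinsic radial function $r$ are both exhaustion functions that are proper and, up to the torus action, each is a bounded function times a fixed power of the other, combined with the homogeneity $\Lambda^*\omega_\xi = \lambda^2 \omega_\xi$ which forces the associated distance to scale with weight exactly one under $\Lambda$ — matching the metric scaling on $X$ up to the discrepancy between the metric flow and $\Lambda$, which is precisely the $r^{\pm\delta}$ slack. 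Once these two lemmas are in place, the three displayed inequalities drop out directly.
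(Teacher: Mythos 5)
There is a genuine gap. Your argument compares $\omega_\xi$ to the cone metric on $\mathcal C$ (via $\Phi_\infty$), and then tries to transfer to $X$ ``via Lemma~\ref{l:asymptotic decay}.'' But Lemma~\ref{l:asymptotic decay} only produces a \emph{diffeomorphism} $\Phi$ between the end of $X$ and $\mathcal Q_{R_0}\subset\mathcal C$ under which the intrinsic Riemannian metric and complex structure are close; it says nothing whatsoever about the relationship between the holomorphic embedding $\Phi_1\colon X'\hookrightarrow\C^N$ and the embedding $\Phi_\infty\colon\mathcal C\hookrightarrow\C^N$. The quantity $r_\xi$ (and the form $\omega_\xi$) restricted to $X$ is defined through $\Phi_1$, so to estimate it you must control how $\Phi_1(X')$ sits in $\C^N$ relative to $\Phi_\infty(\mathcal C)$ --- and this is exactly the delicate content of the 2-step degeneration theory, which your proof never invokes. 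Your phrase ``matching the metric scaling on $X$ up to the discrepancy between the metric flow and $\Lambda$, which is precisely the $r^{\pm\delta}$ slack'' is not a proof of the estimate but a restatement of the conclusion; \emph{why} should that discrepancy be polynomially bounded? A priori the accumulated error between the metric rescaling and the algebraic rescaling $\Lambda$ could be worse than any polynomial, and the whole point of the proposition is to rule this out.

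The paper's proof works on the algebraic side: it writes $\omega_\xi = \Phi_j^*\bigl((F_2^{-1}\circ\cdots\circ F_j^{-1})^*\omega_\xi\bigr)$, uses item~(b) of Section~\ref{two step degeneration} to get the $\lambda^{2j}$-scale comparison between $\omega$ and $\Phi_j^*\omega_\xi$ on the annular shells $B_{j+1}\setminus B_j$, and then controls the iterated composition by observing that $F_l^{-1}=\Lambda^{-1}\circ\sigma_l$ with $\sigma_l\in G_\xi$, $\|\sigma_l-\Id\|\to 0$, the $\sigma_l$ commute with $\Lambda$, and $\sigma^*\omega_\xi$ is homogeneous. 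The accumulated error is then governed by the elementary bound $\prod_{l=2}^j(1+C\|\sigma_l-\Id\|)\leqslant C_\delta\lambda^{-j\delta}$, valid for every $\delta>0$ precisely because $\|\sigma_l-\Id\|\to 0$ (without any rate). That multiplicative bound is exactly where the otherwise mysterious $r^{\pm\delta}$ slack comes from; your proposal has no analogue of it. The parts of your write-up that are correct --- that $r_\xi\sim r$ on $\mathcal C$ itself with $\delta=0$, and the inductive connection-conversion lemma for \eqref{eqn4.3} --- are subsidiary and are consistent with the paper's approach, but they do not substitute for the missing analysis of $F_j\circ\cdots\circ F_2$ versus $\Lambda^{j-1}$.
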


\begin{proof}Since we identify $X'$ with $\Phi_1(X')$ and outside a compact set identify $X$ with $X'$  using $\pi$, we have $\Phi_j=F_j\circ \cdots F_2:X\rightarrow \mathbb C^N$. By Item (b) in Section \ref{two step degeneration} we know that  for all $j\geqslant 1$ and $k\geqslant 0$, the following holds on $B_{j+1}\setminus B_{j}$:
\begin{equation}\label{e:compare omega and omegaxi}
C^{-1}\lambda^{2j}r\leqslant \Phi_j^*r_\xi\leqslant C \lambda^{2j}r,  	\ \ \ \ 
C^{-1}\lambda^{2j}\omega\leqslant \Phi_j^*\omega_\xi\leqslant C \lambda^{2j}\omega,  	\ \ \ \ |\nabla^k_{\omega}(\lambda^{-2j}\Phi_j^*\omega_{\xi})|_{\omega}\leqslant C_{k} \lambda^{kj}.	
\end{equation}
By construction we have $F_j^{-1}=\Lambda^{-1}\circ \sigma_j$ for a linear map $\sigma_j\in G_\xi$ with $\|\sigma_j-\Id\|\rightarrow0$, where $\|\cdot\|$ is a fixed norm on the space of $N\times N$ matrices. Since $\sigma_j$ commutes with $\Lambda$ we may write 
$$F_2^{-1}\circ \cdots\circ F_j^{-1}=\Lambda^{1-j}\sigma_2\circ \cdots\circ \sigma_j. $$
For any $\sigma\in G_\xi$  we have $\mathcal L_{r_\xi\p_{r_\xi}}\sigma^*\omega_\xi=2\sigma^*\omega_\xi$. It follows that $\sigma^*\omega_\xi$ is homogeneous of degree 2 on $\C^N$. So one can find a small neighborhood $U$ of $\text{Id}$ such that for all $\sigma\in U$ and all $k\geqslant 0$, the following holds on $\C^N$:
\begin{equation}\label{e:pull back metric estimate}|\nabla^k_{\omega_\xi}(\sigma^*\omega_\xi-\omega_\xi)|_{\omega_\xi} \leqslant C_k r_\xi^{-k}\|\sigma-\Id\|.\end{equation}
In particular we can also assume
\begin{equation}\label{e:connection difference estimate}|\nabla^k_{\omega_\xi}(\nabla_{\sigma^*\omega_\xi}-\nabla_{\omega_\xi})|_{\omega_\xi}\leqslant C_k r_\xi^{-k-1}\|\sigma-\Id\|, 
\end{equation}
where as usual we view the difference of the two connections as a tensor. 

Since $\|\sigma_j-\Id\|\rightarrow 0$, for $j\gg1$ we have $\sigma_j\in U$.  
We notice the elementary fact that for all $\delta>0$,  $$\prod_{l=2}^j\big(1+C\|\sigma_l-\Id\|\big)\leqslant C_\delta \lambda^{-j\delta}.$$ From this, iterating \eqref{e:pull back metric estimate} and using \eqref{e:compare omega and omegaxi} yield  \eqref{e:rough comparison}.  \eqref{e:rough comparison of distance} follows from a similar argument. 

We now prove the case $k=1$ in \eqref{eqn4.3}. The case $k>1$ can be proved similarly by induction. 
Notice for any $\sigma\in U$ and 2-form $\gamma$, we have 
\begin{eqnarray*}
\nabla_{\omega_\xi}\sigma^*\gamma&=&\nabla_{\sigma^*\omega_\xi}\sigma^*\gamma+(\nabla_{\omega_\xi}-\nabla_{\sigma^*\omega_\xi})\sigma^*\gamma\\&=&\sigma^*(\nabla_{\omega_\xi}\gamma)+(\nabla_{\omega_\xi}-\nabla_{\sigma^*\omega_\xi})\sigma^*\gamma.
\end{eqnarray*}
Therefore,
$$|\nabla_{\omega_\xi}(\sigma_j^*\cdots \sigma_2^*\omega_\xi)|_{\omega_\xi}=|\sum_{l=2}^{j} \sigma_j^*\cdots \sigma_{l+1}^*(\nabla_{\omega_\xi}-\nabla_{\sigma_l^*\omega_\xi})\sigma_{l-1}^*\cdots\sigma_2^*\omega_\xi|_{\omega_\xi}.$$
Using \eqref{e:pull back metric estimate} and \eqref{e:connection difference estimate} we obtain that on the annulus $\{\lambda\leqslant r_\xi\leqslant \lambda^{-1}\}\subset\C^N$ for all $\delta>0$, 
\begin{equation}\label{eqn4.7}|\nabla_{\omega_\xi}(F_2^{-1}\circ\cdots\circ F_j^{-1})^*\omega_\xi|_{\omega_\xi}\leqslant C \lambda^{-2j}\prod_{l=2}^j (1+C\|\sigma_l-\Id\|)\leqslant C_\delta \lambda^{-(2+\delta)j}. 
\end{equation}
By definition $\omega_\xi=\Phi_j^*\left((F_2^{-1}\circ\cdots\circ F_j^{-1})^*\omega_\xi\right)$, so  we have 
$$|\nabla_{\omega}\omega_\xi|_\omega=|\Phi_j^*(\nabla_{\omega_\xi}(F_2^{-1}\circ\cdots\circ F_j^{-1})^*\omega_\xi)|_\omega+|(\nabla_{\omega}-\nabla_{\Phi_j^*\omega_\xi})\Phi_j^*(F_2^{-1}\circ\cdots\circ F_j^{-1})^*\omega_\xi|_\omega.$$
The first term of the right hand side can be handled using \eqref{eqn4.7} and \eqref{e:compare omega and omegaxi}, and the second term  can be dealt with using \eqref{e:compare omega and omegaxi} and \eqref{e:rough comparison}. Together we get \eqref{eqn4.3} when $k=1$.
\end{proof}

      Although not needed in this paper, we remark that the above $\omega_\xi$ is not unique and we can find one with explicit algebraic formula.  We emphasize that from the proof we can not get rid of the $\delta$ in the estimates. In particular, we can \emph{not} claim the uniform equivalence between $\omega$ and $\omega_\xi$. Nonetheless we do know the error is smaller than any polynomial order which suffices for our applications. Moreover, the differences between the covariant derivatives do decay at infinity. Namely,

\begin{corollary}\label{estimates for different covariant derivatives}
 There exists a compact set $K\subset X$ such that on $X\setminus K$, \text{for all $\delta>0$ and $k\geqslant 0$} we have
       \begin{equation}\label{estimate for covariant derivatives}
       |\nabla^k_{\omega}(\nabla_\omega-\nabla_{\omega_{\xi}})|_{\omega}\leqslant C_{\delta,k} r^{\delta-k-1}.
 \end{equation}
\end{corollary}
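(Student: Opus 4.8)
The plan is to reduce the estimate on the difference of connections $\nabla_\omega - \nabla_{\omega_\xi}$ to the metric estimate \eqref{eqn4.3} already established in Proposition \ref{rough estimate}, exactly as the passage from \eqref{e:pull back metric estimate} to \eqref{e:connection difference estimate} was carried out in the proof of that proposition, but now with the roles of the background and comparison metrics played by $\omega$ and $\omega_\xi$. Recall the standard fact that if $g_1, g_2$ are two Kähler (or just Riemannian) metrics then the difference of Levi-Civita connections is a tensor expressible, in Christoffel-symbol form, as $(\nabla_{g_2} - \nabla_{g_1}) = \tfrac12 g_2^{-1}\,\nabla_{g_1}(g_2) * (\text{contractions})$, schematically $A := \nabla_{g_2}-\nabla_{g_1} = g_2^{-1} * \nabla_{g_1} g_2$. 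Applying this with $g_1 = \omega$ (our chosen background Calabi-Yau metric, which we treat as having ``unit size'' for covariant-derivative bookkeeping) and $g_2 = \omega_\xi$, we get $A = \nabla_\omega - \nabla_{\omega_\xi} = \omega_\xi^{-1} * \nabla_\omega \omega_\xi$ up to universal combinatorial constants.

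First I would record the two-sided bound $C_\delta^{-1} r^{-\delta}\,\omega \leqslant \omega_\xi \leqslant C_\delta r^{\delta}\,\omega$ from \eqref{e:rough comparison}, which immediately gives $|\omega_\xi^{-1}|_\omega \leqslant C_\delta r^{\delta}$ (the inverse metric is bounded in $\omega$-norm by the reciprocal of the smallest eigenvalue of $\omega_\xi$ relative to $\omega$). Next, from \eqref{eqn4.3} we have $|\nabla_\omega \omega_\xi|_\omega \leqslant C_{\delta,1}\,r^{\delta-1}$, which is the $k=1$ case we need for $A$ itself. Combining via the product rule in the schematic identity $A = \omega_\xi^{-1} * \nabla_\omega \omega_\xi$, and absorbing $r^{\delta}$ factors into a redefined (still arbitrarily small) $\delta$, yields $|A|_\omega = |\nabla_\omega - \nabla_{\omega_\xi}|_\omega \leqslant C_{\delta} r^{\delta-1}$, which is \eqref{estimate for covariant derivatives} for $k=0$.

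For the higher derivatives $k \geqslant 1$ I would argue by induction on $k$. Differentiating the identity $A = \omega_\xi^{-1} * \nabla_\omega \omega_\xi$ with $\nabla_\omega$ a total of $k$ times and expanding by the Leibniz rule produces a sum of terms each of which is a product of: some number of factors $\nabla_\omega^{a}(\omega_\xi^{-1})$, one factor $\nabla_\omega^{b}(\nabla_\omega \omega_\xi) = \nabla_\omega^{b+1}\omega_\xi$, with the total number of derivatives $a$'s plus $b$ equal to $k$. The factors $\nabla_\omega^{b+1}\omega_\xi$ are controlled directly by \eqref{eqn4.3}: $|\nabla_\omega^{b+1}\omega_\xi|_\omega \leqslant C_{\delta, b+1} r^{\delta-b-1}$. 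The factors $\nabla_\omega^{a}(\omega_\xi^{-1})$ are controlled by differentiating the identity $\omega_\xi^{-1} = $ (adjugate of $\omega_\xi$)$/\det(\omega_\xi$ rel.\ $\omega)$ — or more cleanly by differentiating $\omega_\xi \cdot \omega_\xi^{-1} = \Id$ repeatedly and solving for $\nabla_\omega^a(\omega_\xi^{-1})$ in terms of $\omega_\xi^{-1}$ and $\nabla_\omega^{\leqslant a}\omega_\xi$; since $|\omega_\xi^{-1}|_\omega \leqslant C_\delta r^\delta$ and $|\nabla_\omega^m \omega_\xi|_\omega \leqslant C_{\delta,m} r^{\delta - m}$ for all $m$, one gets $|\nabla_\omega^a(\omega_\xi^{-1})|_\omega \leqslant C_{\delta, a} r^{\delta - a}$. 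Multiplying the bounds on all factors of a given term gives $r^{(\text{total }\delta) - (\text{total }a\text{'s}) - (b+1)} = r^{(\text{total }\delta) - k - 1}$, and after relabelling the accumulated small exponent as $\delta$ we obtain $|\nabla_\omega^k A|_\omega \leqslant C_{\delta, k} r^{\delta - k - 1}$, as desired.

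The only mild subtlety — and the step I expect to need the most care — is keeping the bookkeeping of the $r^\delta$ factors honest: each use of \eqref{e:rough comparison} or \eqref{eqn4.3} costs an $r^\delta$, and finitely many such factors multiply to $r^{C\delta}$, which is harmless precisely because $\delta > 0$ is arbitrary and we may rescale $\delta \mapsto \delta/C$ at the outset. No new analytic input is required beyond Proposition \ref{rough estimate}; the corollary is a formal consequence.
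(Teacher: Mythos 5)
Your proof is correct and follows essentially the same route as the paper: both write the difference of Levi-Civita connections schematically as (a contraction of) $g_\xi^{-1} * \nabla_\omega g_\xi$ — the paper packages this as $T^{-1}*\nabla_\omega T$ with $T=g^{-1}g_\xi$, which is the same object after raising an index with the $\nabla_\omega$-parallel metric $g$ — and then reduce the derivative estimates to \eqref{e:rough comparison} and \eqref{eqn4.3}. You simply spell out the Leibniz-rule/induction bookkeeping that the paper leaves to the reader, including the correct observation that the finitely many accumulated $r^\delta$ losses are absorbed by rescaling $\delta$.
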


\begin{proof}
Two Riemannian metrics $g_1$ and $g_2$ on a manifold determine a section $T$ of $\operatorname{End}(TM)$ via 
\begin{equation*}
    g_2(\cdot,\cdot)=g_1(T\cdot,\cdot), 
\end{equation*}
which is positive and self-adjoint with respect to both $g_1$ and $g_2$. Using Koszul formula, one can check directly that
\begin{equation*}
    \nabla_{g_2}-\nabla_{g_1}=T^{-1}*\nabla_{g_1}T
\end{equation*} where we view both sides as (1,2)-tensors and $*$ denotes some algebraic operations. Therefore  to estimate $|\nabla_{g_1}^k(\nabla_{g_2}-\nabla_{g_1})|_{g_1}$ it suffices to estimate $T^{-1}$ and $\nabla^k_{g_1}T$. Then \eqref{estimate for covariant derivatives} follows from Proposition \ref{rough estimate}.
\end{proof}

\subsection{Plurisubharmonic weight functions}
We construct here some plurisubharmonic functions that will be used later. As mentioned in Section \ref{two step degeneration} we always identify $(X',p')$ with $(\Phi_1(X'),0)$ in $\mathbb C^N$. Then we can naturally restrict functions and forms defined on $\C^N$ to $X'$ and then pull-back to $X$ via the crepant resolution map $\pi$. When there is no confusion we will omit the various pull-back notations. 
 \begin{lemma}\label{psh function on a kahler cone}
     (1). Let $(\mathcal K, \omega_{\mathcal K})$ be a K\"ahler cone and let $\rho$ denote the distance function from the vertex of the cone. Then the function $\varphi  =\log(\rho^2+1)-\frac{1}{100}(\log(\rho^2+3))^{\frac{1}{2}}$ is a strictly plurisubharmonic function and there exists a compact set $K\subset \mathcal K$ such that on  $\mathcal K\setminus K$,\begin{equation*}
	\ii\partial\pp \varphi\geqslant C \rho^{-2}(\log \rho)^{-\frac{3}{2}}\omega_{\mathcal K} .
\end{equation*}

(2). The function on $X$  defined by $\varphi=\log(r_{\xi}^2+1)-\frac{1}{100}(\log(r_{\xi}^2+3))^{\frac{1}{2}}$ satisfies $\sqrt{-1}\p\bp\varphi>0$ away from the exceptional set of $\pi$. Moreover outside a compact set $K$ it satisfies that for all $\epsilon>0$, $$(1-\epsilon)\log (r^2+1)-C_{\epsilon}\leqslant \varphi\leqslant (1+\epsilon)\log (r^2+1)+C_{\epsilon},$$ $$\ii\partial\pp \varphi\geqslant {C_{\epsilon}^{-1}}{r^{-2-\epsilon}}\omega,$$ 
		$$|\nabla_\omega\varphi|_{\omega}\leqslant C_\epsilon r^{-1+\epsilon}. $$
 \end{lemma}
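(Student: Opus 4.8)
We treat the two parts in turn; part~(2) is deduced from part~(1) by restriction to $X$ together with Proposition~\ref{rough estimate}.

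\emph{Part (1).} Set $t=\rho^2$ and $F(t)=\log(t+1)-\tfrac1{100}(\log(t+3))^{1/2}$, so $\varphi=F(\rho^2)$. Away from the vertex $\rho$ is smooth and $\sqrt{-1}\partial\bar\partial\rho^2=2\omega_{\mathcal K}$, hence
\[
\sqrt{-1}\partial\bar\partial\varphi=2F'(\rho^2)\,\omega_{\mathcal K}+F''(\rho^2)\,\sqrt{-1}\partial\rho^2\wedge\bar\partial\rho^2 .
\]
An elementary one-variable computation gives $F'(t)>0$ for all $t\geqslant 0$ (using $(t+3)\sqrt{\log(t+3)}>t+1$) and $F''(t)<0$ for all $t\geqslant 0$ (the positive contribution to $F''$ from the correction term is at most $\tfrac3{400}(t+3)^{-2}$, which is smaller than $(t+1)^{-2}$). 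Combining $F''<0$ with the bound $0\leqslant\sqrt{-1}\partial\rho^2\wedge\bar\partial\rho^2\leqslant a\rho^2\,\omega_{\mathcal K}$ valid on a K\"ahler cone (with $a$ the constant fixed by the normalization $\omega_{\mathcal K}=\tfrac{\sqrt{-1}}2\partial\bar\partial\rho^2$) yields $\sqrt{-1}\partial\bar\partial\varphi\geqslant h(\rho^2)\,\omega_{\mathcal K}$ with $h(t):=2F'(t)+atF''(t)$. The point of the construction is that the leading contributions of $F',F''$ coming from $\log(t+1)$ combine into a term of size $O(t^{-2})$, so that the decisive term in $h$ — of size $c\,t^{-1}(\log t)^{-3/2}>0$ for large $t$ — is the one produced by the second derivative of $-\tfrac1{100}(\log(t+3))^{1/2}$; the coefficient $\tfrac1{100}$ is chosen small enough that in fact $h(t)>0$ for \emph{every} $t\geqslant 0$. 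Since $\rho^{-2}(\log\rho)^{-3/2}\sim t^{-1}(\log t)^{-3/2}$, this proves strict plurisubharmonicity on $\mathcal K\setminus\{\text{vertex}\}$ together with the stated lower bound on $\mathcal K\setminus K$; near the vertex $\varphi$ is bounded and continuous and psh on the punctured cone, hence extends to a psh function on $\mathcal K$ (only the quantitative statement away from $K$ will be needed below).

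\emph{Part (2).} By construction $\varphi$ is the pullback under $\pi$ of the restriction to $X'$ of the function $F(r_\xi^2)$ on $(\C^N,\omega_\xi)$. Applying part~(1) to the K\"ahler cone $(\C^N,\omega_\xi)$, the function $F(r_\xi^2)$ is strictly psh on $\C^N\setminus\{0\}$ with $\sqrt{-1}\partial\bar\partial F(r_\xi^2)\geqslant C\,r_\xi^{-2}(\log r_\xi)^{-3/2}\,\omega_\xi$ for $r_\xi\gg 1$. Since $X'\setminus\{0\}$ is a smooth submanifold of $\C^N\setminus\{0\}$, restriction of $(1,1)$-forms preserves positivity, and $\pi$ is a biholomorphism away from its exceptional set, this already yields $\sqrt{-1}\partial\bar\partial\varphi>0$ on $X$ off the exceptional set of $\pi$. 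On $X\setminus K$ we pass from $r_\xi$ to $r$ using Proposition~\ref{rough estimate}: we have $C_\delta^{-1}r^{1-\delta}\leqslant r_\xi\leqslant C_\delta r^{1+\delta}$, $\omega_\xi\geqslant C_\delta^{-1}r^{-\delta}\omega$, and — since restricting a $1$-form to the submanifold $X$ cannot increase its norm and $|d(r_\xi^2)|_{\omega_\xi}=O(r_\xi)$ on the cone — $|d(r_\xi^2)|_\omega\leqslant C_\delta r^{\delta/2}\,|d(r_\xi^2)|_{\omega_\xi}\leqslant C_\delta r^{1+2\delta}$. Then $\log(r_\xi^2+1)=2\log r_\xi+O(1)$ while $(\log(r_\xi^2+3))^{1/2}=O((\log r)^{1/2})$ is negligible against $\log(r^2+1)$, which gives the two-sided bound on $\varphi$; the inequality $r_\xi^{-2}(\log r_\xi)^{-3/2}\geqslant C_\delta r^{-2-3\delta}$ together with $\omega_\xi\geqslant C_\delta^{-1}r^{-\delta}\omega$ gives $\sqrt{-1}\partial\bar\partial\varphi\geqslant C_\delta^{-1}r^{-2-4\delta}\omega$; and $|\nabla_\omega\varphi|_\omega=|F'(r_\xi^2)|\,|d(r_\xi^2)|_\omega\leqslant 2r_\xi^{-2}\cdot C_\delta r^{1+2\delta}\leqslant C_\delta r^{-1+4\delta}$. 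Choosing $\delta$ small in terms of $\epsilon$ yields the three claimed inequalities.

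The only delicate point is the sign and decay bookkeeping in part~(1): one must verify that the term $-\tfrac1{100}(\log(\rho^2+3))^{1/2}$ — inserted precisely to slow the decay of $\sqrt{-1}\partial\bar\partial\log(\rho^2+1)\sim\rho^{-4}\omega_{\mathcal K}$ down to the borderline rate $\rho^{-2}(\log\rho)^{-3/2}$ — does not destroy plurisubharmonicity at \emph{any} radius, which is exactly what forces the cancellation described above and pins down the admissible range of its coefficient. Once part~(1) is in place, part~(2) is a mechanical translation through Proposition~\ref{rough estimate}.
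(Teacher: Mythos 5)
Your proof is correct and follows the same route as the paper, which only sketches the one-variable computation behind part (1) and then (for part (2)) applies it to $(\C^N,\omega_\xi)$ together with Proposition \ref{rough estimate}; you have simply supplied the details of that computation. One small point worth making explicit: the identity $\tfrac{2}{t+1}-\tfrac{at}{(t+1)^2}=\tfrac{(2-a)t+2}{(t+1)^2}$ is only $O(t^{-2})$ because the constant in $\sqrt{-1}\partial\rho^2\wedge\bar\partial\rho^2\leqslant a\rho^2\omega_{\mathcal K}$ is exactly $a=2$ for a K\"ahler cone with $\omega_{\mathcal K}=\tfrac{\sqrt{-1}}{2}\partial\bar\partial\rho^2$ — your argument implicitly uses this and would not go through for $a>2$, so it deserves to be stated.
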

 \begin{proof}
   (1) follows from a straightforward computation: the first term is strictly  plurisubharmonic but is decaying at the rate $\rho^{-4}$ along the radial direction; the second term is used to strengthen the positivity along the radial direction. Applying (1) to the K\"ahler cone $(\C^N, \omega_\xi)$ and then using Proposition \ref{rough estimate}, we obtain (2).
 \end{proof}
   % Choose a smooth increasing convex function $u:\mathbb R\rightarrow \mathbb R$ such that $u(t)=1$ on $2T$ and $u(t)=t$ for $t\gg1$. Then we  let $h_\alpha=u\circ \rho^{2\alpha}$. 

\subsection{Decomposition of the  K\"ahler form}
We will use the following version of H\"ormander's $L^2$ estimate, which follows from for example \cite[Chapter VIII, Theorem 6.1]{Demailly}.

\begin{theorem}\label{Hormander}
    Let $M$ be an $n$-dimensional complex manifold admitting a complete K\"ahler metric and $\omega_M$ be a K\"ahler metric on $M$ which is not necessarily complete. Suppose $\varphi_M$ is a smooth function with $\sqrt{-1}\p\bp\varphi_M\geqslant \Upsilon\omega_M$ for a continuous non-negative function $\Upsilon$. Let $q$ be a positive integer. Then we have 
    \begin{itemize}
        \item [(1).] for any  $(n,q)$ form $\eta$ on $M$  with  $\bp \eta=0$ and $\int_{M}\Upsilon^{-1} |\eta|_{\omega_M}^2e^{-\varphi_M}\omega_M^n<\infty$, there exists an $(n, q-1)$ form  $\zeta$ satisfying $\bp\zeta=\eta$ and with estimate
    \begin{equation*}
        \int_{M} |\zeta|_{\omega_M}^2e^{-\varphi_M}\omega_M^n\leqslant \int_{M}q^{-1}\Upsilon^{-1}|\eta|_{\omega_M}^2e^{-\varphi_M}\omega_M^n;
    \end{equation*}
    \item[(2).] if the Ricci curvature of $\omega_M$ is non-negative, then the conclusion in (1) holds for $(0,q)$ forms.
    \end{itemize}

\end{theorem}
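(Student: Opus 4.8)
The plan is to deduce both statements from the general weighted $L^2$-estimate for $\bar\partial$ on a complete K\"ahler manifold --- precisely \cite[Chapter VIII, Theorem 6.1]{Demailly} --- via the standard dictionary between smooth Hermitian weights and Hermitian holomorphic line bundles. Since $M$ is assumed to carry \emph{some} complete K\"ahler metric, that theorem applies with the (possibly incomplete) metric $\omega_M$ playing the role of the metric in which norms are measured, and it produces, for a Hermitian holomorphic line bundle $(L,h)$ and a $\bar\partial$-closed $(n,q)$-form $\eta$ with values in $L$, a solution $\zeta$ of $\bar\partial\zeta=\eta$ with
\[
\int_M |\zeta|^2_{\omega_M,h}\,\omega_M^n\ \leqslant\ \int_M \big\langle B^{-1}\eta,\eta\big\rangle_{\omega_M,h}\,\omega_M^n,\qquad B:=[\,\ii\Theta(L,h),\Lambda_{\omega_M}\,],
\]
whenever the right-hand side is finite, where $\langle B^{-1}\eta,\eta\rangle$ is understood in the usual way as $\inf\{|v|^2: Bv=\eta\}$ (equal to $+\infty$ at points where $\eta$ is not in the image of $B$).

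For part (1) I would take $L=\mathcal O_M$ with the smooth weight $h=e^{-\varphi_M}$, so that $\ii\Theta(L,h)=\ii\p\bp\varphi_M\geqslant \Upsilon\,\omega_M$. The point is a pointwise linear-algebra estimate: on $(n,q)$-forms the curvature operator $[\,\ii\p\bp\varphi_M,\Lambda_{\omega_M}\,]$ is bounded below by $q\,\Upsilon\cdot\Id$, because if one diagonalizes $\ii\p\bp\varphi_M$ with respect to $\omega_M$ at a point, its eigenvalue on the $(n,q)$-covector indexed by $J$ (with $|J|=q$) is the sum of the $q$ eigenvalues $\{\gamma_j\}_{j\in J}$, each of which is $\geqslant \Upsilon$. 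Hence $\langle B^{-1}\eta,\eta\rangle\leqslant (q\Upsilon)^{-1}|\eta|^2$ wherever $\Upsilon>0$; and the hypothesis $\int_M \Upsilon^{-1}|\eta|^2e^{-\varphi_M}\omega_M^n<\infty$ forces $\eta$ to vanish almost everywhere on $\{\Upsilon=0\}$, so that the right-hand side above is at most $\int_M q^{-1}\Upsilon^{-1}|\eta|^2e^{-\varphi_M}\omega_M^n$. This is exactly the asserted inequality.

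For part (2) I would identify a $(0,q)$-form with values in $\mathcal O_M$ with an $(n,q)$-form with values in the anticanonical bundle $K_M^{-1}$, the latter carrying the metric induced by $\omega_M$ twisted by $e^{-\varphi_M}$; under this identification the norms and volume forms in the two pictures agree. The curvature of this metric on $K_M^{-1}$ is $\Ric(\omega_M)+\ii\p\bp\varphi_M$, which is $\geqslant\Upsilon\,\omega_M$ once $\Ric(\omega_M)\geqslant 0$. Running the same pointwise eigenvalue bound on $(n,q)$-forms then gives $B\geqslant q\,\Upsilon\cdot\Id$ and hence the identical estimate, now for $(0,q)$-forms. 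The only points requiring care are bookkeeping ones --- checking that the existence of a complete K\"ahler metric (rather than completeness of $\omega_M$ itself) is all that Demailly's theorem demands, and justifying the ``inversion'' of the merely semi-positive operator $B$ --- and both are handled exactly as in the discussion of part (1), by passing to $\{\Upsilon>0\}$ and using the finiteness of the weighted $L^2$ norm of $\eta$; I do not expect any genuine obstacle beyond this care with conventions.
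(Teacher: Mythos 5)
Your proposal is correct and follows essentially the same approach as the paper, which proves the statement simply by citing \cite[Chapter VIII, Theorem 6.1]{Demailly}; you have merely filled in the routine bookkeeping (the pointwise eigenvalue bound $B\geqslant q\Upsilon\cdot\Id$ on $(n,q)$-covectors, the twist by $K_M^{-1}$ to pass from $(n,q)$- to $(0,q)$-forms, and the treatment of the set $\{\Upsilon=0\}$) that the citation leaves implicit.
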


The main result of this subsection is 
\begin{theorem}\label{decomposition of kahler forms}
	 For every $\epsilon>0$, there exist a K\"ahler form $\beta$ and a real-valued smooth function $\psi$ on $X$ such that 
\begin{enumerate}[(1).]
		\item $\omega=\beta+\ii\partial\pp \psi$,
		\item $|\nabla^k_{\omega}\beta|_{\omega}=O(r^{-2+\epsilon-k})$ for all $k\geqslant 0$,
		\item $|\nabla_{\omega}^k\psi|_{\omega}=O(r^{2+\epsilon-k})$ for all $k\geqslant 0$.
\end{enumerate}
\end{theorem}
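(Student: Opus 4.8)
The plan is to construct $\beta$ by modifying $\omega$ with an exact term that absorbs the difference between $\omega$ and the reference cone metric $\omega_\xi$ near infinity, and to check decay using the weighted elliptic theory of Section~\ref{S3}. First I would compare $\omega$ and $\omega_\xi$ on the end. By Proposition~\ref{rough estimate} they are slowly comparable and all covariant derivatives of $\omega_\xi$ with respect to $\omega$ decay like $r^{\delta-k}$; but this is not yet a good model since $\omega_\xi$ is not Calabi-Yau and the comparison is only up to $r^{\pm\delta}$. The better approach is to work with $\omega-\omega_\xi$ as a $\p\bp$-exact form in a suitable class: since $X\setminus K$ is holomorphically embedded in $\C^N$ and $H^1(X_R,\cO)=0$ by \eqref{vanishing of H^1}, the closed $(1,1)$-form $\omega-\omega_\xi$, restricted to a sublevel set $X_R$, is $\ii\p\bp$ of a smooth potential $u$ on $X_R$. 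Thus globally on $X_R$ we get $\omega=\omega_\xi+\ii\p\bp u$. One then needs a quantitative bound on $u$: using $|\nabla^k_\omega(\omega-\omega_\xi)|_\omega\le C_{\delta,k}r^{\delta-k}$ together with Proposition~\ref{c:elliptic regularity} (applied with $\Delta_\omega u$ controlled by the trace of $\omega-\omega_\xi$, which is $O(r^{\delta})$, i.e.\ $s=2+\delta$ after accounting for the Laplacian), one concludes $|\nabla^k_\omega u|_\omega=O(r^{2+\epsilon-k})$ for every $\epsilon>0$. Here an $L^2$ bound on $u$ to feed into Proposition~\ref{c:elliptic regularity} can be obtained either from the $\p\bp$-lemma construction via Hörmander's estimate (Theorem~\ref{Hormander}, using the plurisubharmonic weight $\varphi$ of Lemma~\ref{psh function on a kahler cone}) or from a direct integration argument; getting this bound cleanly is one of the two main technical points.

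Next I would take a cutoff. Let $\chi$ be a smooth cutoff that is $1$ on $\{r\le R_1\}$ and $0$ on $\{r\ge R_2\}$ with $|\nabla^k\chi|=O(r^{-k})$, and set $\psi=\chi u$, extended by zero, and $\beta=\omega-\ii\p\bp\psi$. Then $(1)$ holds tautologically, and $(3)$ holds because on the transition annulus $|\nabla^k_\omega(\chi u)|_\omega=O(r^{2+\epsilon-k})$ by the Leibniz rule and the bounds on $u$ and $\chi$, while $\psi$ is compactly supported once $r$ is bounded. For $(2)$, on $\{r\le R_1\}$ we have $\beta=\omega-\ii\p\bp u=\omega_\xi$, which needs to be modified: rather than $\omega_\xi$ itself (which is only $\p\bp$-closed, not obviously of the right decay as a deviation of a \emph{fixed} cone metric), I would instead not try to flatten $\beta$ all the way to a cone but only require the stated estimate $|\nabla^k_\omega\beta|_\omega=O(r^{-2+\epsilon-k})$. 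On $\{r\ge R_2\}$, $\beta=\omega$, which does not decay, so the naive cutoff is wrong: I actually need $\beta$ to equal a genuinely decaying Kähler form near infinity. The correct choice is to interpolate towards a Calabi-Yau \emph{cone} form, not towards $\omega_\xi$: write $\omega_{\mathcal C}=\ii\p_{J_{\mathcal C}}\bp_{J_{\mathcal C}}\tfrac12 r^2$ on the end, transport it to $X$ via the diffeomorphism $\Phi$ of Lemma~\ref{l:asymptotic decay}, and observe that $\omega-\Phi_*\omega_{\mathcal C}$ is $o(r^{-2})$-small (with derivatives, after the $\p\bp$-lemma step) hence can be written as $\ii\p\bp v$ with $|\nabla^k_\omega v|_\omega=o(r^{2-k})$; then set $\beta=\omega-\ii\p\bp(\chi' v)$ where $\chi'$ cuts off near infinity, so that $\beta$ equals $\Phi_*\omega_{\mathcal C}$ plus the remaining compactly-supported part, and a cone metric has $|\nabla^k_\omega(\Phi_*\omega_{\mathcal C})|_\omega=O(r^{-2-k})$ by homogeneity. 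Combining the two cutoffs (one near the compact core to remove the resolution region where $\omega_\xi$ is not Kähler, one near infinity to reach the cone) gives the desired $\beta$ and $\psi$, with $(2)$ following from $|\nabla^k_\omega\beta|_\omega=O(r^{-2+\epsilon-k})$ on each region.

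The main obstacle I anticipate is the interface between the \emph{algebraic} model $\omega_\xi$ (available globally, including across the resolution map $\pi$, but not Calabi-Yau and only comparable up to $r^{\pm\delta}$) and the \emph{geometric} cone model $\omega_{\mathcal C}$ (genuinely decaying, but only defined via the non-explicit diffeomorphism $\Phi$ of Lemma~\ref{l:asymptotic decay} with no a priori rate). One must run the $\p\bp$-lemma plus weighted-Schauder bootstrap (Proposition~\ref{c:elliptic regularity}) in a way that only uses the \emph{qualitative} decay of $\Phi^*g-g_{\mathcal C}$ and the \emph{slow} comparability of $\omega_\xi$, and still extract the polynomial exponent $-2+\epsilon$ in $(2)$ and $2+\epsilon$ in $(3)$; the loss of $\epsilon$ is exactly the loss in Proposition~\ref{rough estimate}, which is unavoidable and harmless. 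A secondary point is ensuring the $L^2$-integrability hypothesis in Proposition~\ref{c:elliptic regularity} and in Theorem~\ref{Hormander}: one arranges the weight so that $\Upsilon^{-1}|\eta|^2 e^{-\varphi_M}$ is integrable, which is where the precise form of $\varphi$ in Lemma~\ref{psh function on a kahler cone}, with its $(\log\rho)^{-3/2}$ positivity, is used. Modulo these, the construction is a standard cutoff-and-correct argument.
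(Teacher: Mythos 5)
Your proposal is not correct, and the gap is not in the technical bookkeeping but in the core estimates; let me point to three specific places where it breaks down, and then briefly describe what the paper actually does.

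First, the step ``the closed $(1,1)$-form $\omega-\omega_\xi$, restricted to $X_R$, is $\ii\p\bp u$ by $H^1(X_R,\mathcal O)=0$'' is unjustified. The vanishing $H^1(X_R,\mathcal O)=0$ is a \emph{Dolbeault} statement (a $\bp$-closed $(0,1)$-form is $\bp$-exact); solving $\ii\p\bp u=\omega-\omega_\xi$ is a Bott--Chern / $\p\bp$-lemma statement, and in particular it first requires $[\omega-\omega_\xi]=0$ in $H^2_{dR}(X_R)$. But $X_R$ retracts onto the exceptional set of the crepant resolution $\pi$, and for a nontrivial resolution $[\omega]\in H^2_{dR}(X_R)$ is generically nonzero (a K\"ahler class pairs positively with compact curves in the exceptional fiber), while $\omega_\xi=\pi^*(\tfrac{\ii}{2}\p\bp r_\xi^2)$ is $\p\bp$-exact. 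So $\omega-\omega_\xi$ is in general \emph{not} $\p\bp$-exact on $X_R$.

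Second, and more seriously: you assert that $\omega-\Phi_*\omega_{\mathcal C}$ is $o(r^{-2})$ and that ``a cone metric has $|\nabla^k_\omega(\Phi_*\omega_{\mathcal C})|_\omega=O(r^{-2-k})$ by homogeneity.'' Both are false. Lemma~\ref{l:asymptotic decay} gives only $|\nabla^l_{g_{\mathcal C}}(\Phi^*g-g_{\mathcal C})|=o(r^{-l})$ with no polynomial rate, so $\omega-\Phi_*\omega_{\mathcal C}$ is merely $o(1)$. And the K\"ahler form of a cone metric is \emph{parallel} with constant norm $|\omega_{\mathcal C}|_{g_{\mathcal C}}=\sqrt n$; since $g$ and $g_{\mathcal C}$ are uniformly equivalent on the end, $|\Phi_*\omega_{\mathcal C}|_\omega$ is bounded away from zero. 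Homogeneity of degree $2$ refers to the scaling of $\omega_{\mathcal C}$ and $g_{\mathcal C}$ jointly, which makes the pointwise norm scale-invariant, i.e.\ $O(1)$, not $O(r^{-2})$. So the $\beta$ you propose near infinity is of size $O(1)$ and does not come close to satisfying part~(2). (Your two claimed rates $o(r^{-2})$ for the form and $o(r^{2-k})$ for the potential $v$ are also mutually inconsistent: two derivatives of $v$ should lose $r^2$, not $r^4$.)

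Third, even if the errors above were repaired, the approach misses the structural reason $\beta$ cannot simply be pushed to zero: the restriction of $\omega$ to the end determines a (possibly nonzero) class in $H^2(L)$, and this class must be carried by $\beta$, because $\p\bp$-exact forms are de Rham exact. The decaying representative of that class is, up to error, the pullback of a $g_L$-harmonic form on the link, whose $g_{\mathcal C}$-norm is $O(r^{-2})$ by the cone structure $g_{\mathcal C}=dr^2+r^2g_L$. This is what sets the rate $-2$ in the statement. The paper's Step~1 does exactly this: it produces $\omega=\tilde\beta+d\tilde\eta$ with $\tilde\eta$ vanishing on $X_R$ and $|\nabla^k_\omega\tilde\beta|_\omega=O(r^{-2-k})$, and then Steps~2--5 convert $d\tilde\eta$ into $\ii\p\bp\psi$ (removing the $(0,2)$ part of $\tilde\beta$ via H\"ormander with the weight $\varphi$ of Lemma~\ref{psh function on a kahler cone}, controlling the size of a $d$-primitive by radial integration, using $H^1(X_R,\mathcal O)=0$ only to absorb the $(0,1)$-piece of that primitive on the 1-convex core, and finally adding $\ii\p\bp(Ah)$ with $h\approx r^\epsilon$ to make $\beta$ K\"ahler). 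Note each H\"ormander step costs an $\epsilon$ in the exponent, which is why the theorem states $O(r^{-2+\epsilon-k})$ rather than $O(r^{-2-k})$. Your proposal does not touch the $d$ versus $\ii\p\bp$ gymnastics nor the cohomology of the link, so the argument does not go through as written.
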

 
 \begin{proof}
 Let $\varphi$ be the plurisubharmonic function on $X$ obtained in Lemma \ref{psh function on a kahler cone}-(2). Fix $\epsilon>0$.
 
 \
 
 \textbf{Step 1.}   Using the diffeomorphism $\Phi$ constructed in Lemma \ref{l:asymptotic decay}, for  $R$ large we can identify $X\setminus X_R$ with $\mathcal{Q}_R$ and we may assume $X_R$ is 1-convex and contains the exceptional set of $\pi$. Then one can write $\omega=\tilde\beta+d\tilde\eta$ with $|\nabla^k_{\omega} \tilde\beta|_{\omega}=O(r^{-2-k})$ for all $k\geqslant 0$ and $\widetilde \eta=0$ on $X_R$.

 \
 
 \textbf{Step 2.}
 We decompose $\tilde\beta$ according to types:
 	$
 		\tilde\beta=\tilde \beta^{2,0}+\tilde \beta^{1,1}+\tilde \beta^{0,2}.
 $
 Since $d\tilde \beta=0$, we have $\bp\tilde\beta^{0, 2}=0$.  Notice by \textbf{Step 1}, we know that for all $\tau>0$, $$\int_{X} |\tilde\beta^{0,2}|_\omega^2 e^{-(n-2+\tau)\varphi}\omega^n<\infty.$$
Notice also that $\widetilde{\beta}^{0,2}=0$ on $X_R$, so we may apply Theorem  \ref{Hormander} (with $\varphi_M=(n-1+\tau)\varphi$) to obtain that for every $\tau>0$, there exists a (0,1)-form $\zeta$ on $X$ satisfying $\bp\zeta=\tilde\beta^{0,2}$ and $\int_{X}|\zeta|_{\omega}^2 e^{-(n-1+\tau)\varphi}\omega^n<\infty$.
It follows from  Proposition \ref{c:elliptic regularity} that $|\nabla_{\omega}^k\zeta|_{\omega}=O(r^{-1+2\tau-k})$ for all $k\geqslant 0$.
 Choose $\tau=\frac{\epsilon}{4}$. 
 	Then we obtain $\omega=\beta_2+d\eta_2$, where \begin{equation*}
 		\beta_2=\tilde\beta^{1,1}-\partial\zeta-\pp\olsi{\zeta}
 		, \ \ \ \ \eta_2=\zeta+\olsi{\zeta}+\tilde\eta,
 	\end{equation*} and $|\nabla_{\omega}^k\beta_2|_{\omega}=O(r^{-2+\frac{\epsilon}{2}-k})$ for all $k\geqslant 0$.  Notice $\beta_2$ and $d\eta_2$ are both real-valued $(1,1)$ forms.
 	
 	\

  	\textbf{Step 3.}  Let $\chi$ be a smooth compactly supported function on $X$ which equals 1 in a neighborhood of $X_R$. Then  $\eta_2=\chi\eta_2+\eta_2'$, where $\eta_2'$ vanishes in a neighborhood of $X_R$. Writing
  		$d\eta_2'=dr\wedge\alpha_1+\alpha_2$, where $\p_r\lrcorner \alpha_1=\p_r\lrcorner \alpha_2=0$, we define $\lambda=\int_{R}^{r}\alpha_1 dr$. Then $\lambda$ vanishes in a neighborhood of $X_R$ so extends to a 1-form on $X$. Moreover, one can check that $d\eta_2'=d\lambda$.  The decay of $\beta_2$ implies $|d\eta_2|_{\omega}\leqslant C$ on $X$, so we have $|\lambda|=O(r^{1+\delta})$ for all $\delta>0$. It follows that $d\eta_2=d\gamma$, where $\gamma=\chi\eta_2+\lambda$, with $|\gamma|=O(r^{1+\delta})$ for all $\delta>0$.
  		
  		\
  		
  		\textbf{Step 4.} Since $d\gamma=\omega-\beta_2$ is of type $(1,1)$, we have $\bp\gamma^{0, 1}=0$.  Since $H^1(X_R,\mathcal{O})=0$ by \eqref{vanishing of H^1}, there exists a smooth function $f$ on $X_R$ such that $\pp f=\gamma^{0,1}$ on $X_R$. Choose a cut off function $\chi_0$ which equals 1 in a neighborhood of the exceptional set of $\pi$ and 0 outside $X_{R-\epsilon'}$ for some small positive number $\epsilon'$. 
  		We now apply Theorem \ref{Hormander} (with $\varphi_M=(n+2+\frac \epsilon 2)\varphi$) to obtain  a function $u$ satisfying $\bp u=\gamma^{0,1}-\pp(\chi_0f)$ and 
 	\begin{equation*}
 		\int_X |u|^2e^{-(n+2+\frac{\epsilon}{2})\varphi}\omega^n<\infty.
 	\end{equation*}
Then $d\gamma=\sqrt{-1}\p\bp\psi_1$, where $\psi_1=\sqrt{-1}(u+\chi_0f-\bar u-\bar\chi_0\bar f)$. Again by Proposition \ref{c:elliptic regularity} we see
 	 $|\nabla_\omega^k\psi_1|_{\omega}=O(r^{2+\epsilon-k})$ for all $k\geqslant 0$.

\

\textbf{Step 5.} We have achieved that $\omega=\beta_2+\sqrt{-1}\p\bp\psi_1$ with estimates on both $\beta_2$ and $\psi_1$. It remains to modify $\beta_2$ to be a K\"ahler form. By the asymptotics for the complex structure and the Riemannian metric obtained in Lemma \ref{l:asymptotic decay}, we know that there exists a large constant $\olsi{R}>R$ such that on the region $\{r>\olsi{R}\}$ we have	$\ii\partial\pp r^{\epsilon}\geqslant c r^{-2+\epsilon}\omega$ for some positive number $c>0$. 
Choose a smooth non-decreasing convex function $u:\mathbb R\rightarrow \mathbb R$ such that $u(t)=1$ for $t\leqslant 2\olsi{R}^{\epsilon}$ and $u(t)=t$ for $t\gg1$ and let $h=u\circ r^{\epsilon}$. Then $h$ can be naturally viewed as a smooth plurisubharmonic function on $X$ and outside a compact set $K'$ we have 
\begin{equation*}
	\ii\partial\pp h\geqslant c r^{-2+\epsilon}\omega
\end{equation*}
for some positive number $c>0$. Choose a smooth compactly supported function $\chi$ on $X$ which equals 1 on a neighborhood of $K'$. Then 
 we define $$\beta=\beta_2+\ii\partial\pp(\chi\psi_1+Ah),$$
 $$\psi=(1-\chi)\psi_1-Ah, $$
 where $A$ is a constant. One can choose $A$ large  so that $\beta$ is K\"ahler. 
 Then $\omega=\beta+\sqrt{-1}\p\bp\psi$. It follows from the above construction that we have the desired estimates. 
\end{proof}

 \subsection{Construction  of background K\"ahler forms}\label{section:defining projection map}  Recall some notations in Section \ref{two step degeneration}:  $X'\subset \mathbb C^N$ is the affine variety defined by the ring $R(X)$ of holomorphic functions on X with polynomial growth, $W$ is the weighted asymptotic cone of $X'$ with respect to $\Lambda$ and $Y_j=\Lambda^{j-1}.X'$. So in particular away from $0$ we know that $Y_j$ converges to $W$ as smooth submanifolds in $\mathbb C^N$. For notational convenience we also denote $Y_\infty\equiv W$.   We denote by $\text{Sing}(Y_j)$ the singular set of $Y_j$, which  consists of finitely many points, and denote $Y_j^\circ=Y_j\setminus \text{Sing}(Y_j)$.    We also define $\mathfrak S\equiv\cup_{1\leqslant j\leqslant \infty}\text{Sing}(Y_j)$.

\begin{proposition}\label{construction of projection maps}
   There exist a constant $\delta_0>0$, a compact set $K\subset \mathbb C^N$ which contains $\mathfrak S$, and  for each $j\in \mathbb Z_{\geqslant 1}\cup\{\infty\}$ a  diffeomorphism $p_j:X'\setminus K\rightarrow Y_j\setminus K$  such that for all $k\geqslant 0$  
   \begin{equation*}
   |\nabla_{\omega_\xi}^k (p_j-\Id)|_{\omega_\xi}\leqslant C_kr_\xi^{-\delta_0-k},
   \end{equation*}
where we view both $p_j$ and $\Id$ as maps into $\C^N$, and    \begin{equation*} \label{e: complex strucutre close}
   	\left|\nabla^k_{\omega_\xi}\left(p_j^*(J_{Y_j})-J_{X'}\right)\right|_{\omega_\xi}\leqslant C_kr_\xi^{-\delta_0-k}.   \end{equation*} 
\end{proposition}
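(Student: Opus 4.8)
The plan is to construct the diffeomorphisms $p_j$ directly from the convergence $Y_j \to W$ in $\mathbb{C}^N$, exploiting that all $Y_j$ (including $Y_\infty = W$) are $T_\xi$-invariant and share a common radial structure coming from $\omega_\xi$. First I would use the strong sense of convergence recorded in Item (b)(3) of Section \ref{two step degeneration}: there is a generating set $\{P_1,\dots,P_l\}$ of $\mathcal{I}_{X'}$ whose leading terms $\{\tilde P_1,\dots,\tilde P_l\}$ generate $\mathcal{I}_W$, where leading terms are taken with respect to the weighted $\Lambda$-grading. Since $Y_j = \Lambda^{j-1}.X'$, the variety $Y_j$ is cut out by the rescaled polynomials $\lambda^{-(j-1)\cdot\deg}\,P_i\circ\Lambda^{1-j}$, which converge to $\tilde P_i$ with exponentially small (in $j$) lower-order tails; this gives, on the scale-invariant annular region $\{1 \le r_\xi \le \lambda^{-1}\}$, a uniform $C^k$ bound $|Y_j - W| = O(\lambda^{\delta_0 j})$ for a fixed $\delta_0>0$ measuring the smallest gap between the weighted degree of a generator and the next weight appearing in it. Combining this with the homogeneity $\Lambda^*\omega_\xi = \lambda^2\omega_\xi$ and rescaling by $\Lambda^{j-1}$ back to unit scale, the estimate propagates to all scales in the form $|Y_j - W|_{C^k(\text{at radius } r_\xi)} = O(r_\xi^{-\delta_0})$, and likewise $W$ is close to $X'$ at infinity in the same weighted sense (here one also uses that $X'$ and $W$ have the same asymptotic cone at infinity via $\mathcal{C}$, cf. Item (b)(5)).

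Next I would turn this $C^k$-closeness of the submanifolds into an actual diffeomorphism by a nearest-point projection. Fix $j$; away from a large compact set $K$ (chosen to contain $\mathfrak{S} = \cup_j \mathrm{Sing}(Y_j)$, which is possible since $\mathrm{Sing}(Y_j) \to \mathrm{Sing}(W)$ and each singular set is finite), both $X'\setminus K$ and $Y_j\setminus K$ are smooth submanifolds of $\mathbb{C}^N$ lying in a thin tubular neighborhood of one another. Using a tubular neighborhood of $X'$ in $\mathbb{C}^N$ (adapted to $\omega_\xi$, so that estimates are taken in the scale-invariant norm $|\cdot|_{\omega_\xi}$) and the normal exponential map, the orthogonal projection onto $Y_j$ of a point $x\in X'$ is well-defined once the $C^1$-distance is small, which holds for $r_\xi(x) \gg 1$; this defines $p_j: X'\setminus K \to Y_j\setminus K$. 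That $p_j$ is a diffeomorphism onto its image and that the image is the full $Y_j\setminus K$ (after possibly enlarging $K$) follows from the inverse function theorem together with the $C^1$-smallness, uniformly in $j$. The weighted estimate $|\nabla_{\omega_\xi}^k(p_j - \mathrm{Id})|_{\omega_\xi} \le C_k r_\xi^{-\delta_0 - k}$ is then exactly the $C^k$-closeness of $Y_j$ to $X'$ transported through the (scale-equivariant) tubular neighborhood construction, and the estimate on $p_j^*(J_{Y_j}) - J_{X'}$ follows because $J_{Y_j}$ is the restriction of the standard complex structure on $\mathbb{C}^N$ to the tangent spaces of $Y_j$: the tangent planes of $Y_j$ converge to those of $X'$ at the rate $r_\xi^{-\delta_0}$ in $C^k$, hence so do the induced complex structures after pulling back by $p_j$.

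The main obstacle I expect is making the closeness estimate $|Y_j - W| = O(r_\xi^{-\delta_0})$ genuinely uniform in $j$ and uniform in the radial variable simultaneously, i.e. decoupling the two limits $\epsilon\to 0$ (here, radius $\to\infty$) and $j\to\infty$. The subtlety, already flagged in the discussion of \cite{DS2} after Item (b), is that although $F_j = \Lambda + \tau_j$ with $\tau_j\to 0$, the accumulated composition $F_j\circ\cdots\circ F_2$ may drift away from $\Lambda^{j-1}$; one must check that this drift only costs a factor $\prod_{l}(1 + C\|\sigma_l - \mathrm{Id}\|) \le C_\delta \lambda^{-\delta j}$, exactly as in the proof of Proposition \ref{rough estimate}, and that this extra $\lambda^{-\delta j}$ is absorbed by lowering $\delta_0$ slightly (so the final $\delta_0$ is any number strictly smaller than the a priori weight gap). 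Once that bookkeeping is in place — essentially the same elementary product estimate used for \eqref{e:rough comparison} — the rest is a routine tubular-neighborhood argument carried out in the scale-invariant $\omega_\xi$-geometry. A secondary point to handle with care is choosing $K$ once and for all to contain $\mathfrak{S}$: this uses that $\mathrm{Sing}(Y_j)$ is contained in a fixed $r_\xi$-bounded set uniformly in $j$, which follows from the convergence $Y_j \to W$ together with $\mathrm{Sing}(W) = \{0\}$.
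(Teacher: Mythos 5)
Your proposal is essentially the same approach as the paper's: generators of $\mathcal I_{X'}$ with leading terms generating $\mathcal I_W$ give a polynomial decay rate $\delta_0$ (the paper writes $F_\alpha(\Lambda^{-l}.z)=\lambda^{-lc_\alpha}(G_\alpha(z)+\lambda^{l\delta_\alpha}E_{\alpha l}(z))$ and takes $\delta_0=\min_\alpha\delta_\alpha$), then one reduces to the fixed annulus $A_1=\{\lambda\leqslant r_\xi\leqslant\lambda^{-1}\}$ using $\Lambda^*\omega_\xi=\lambda^2\omega_\xi$, and constructs the diffeomorphism by normal projection in a tubular neighborhood; the $J$-estimate comes from the fact that the nearby parametrizations (the paper's $h_{\gamma,l}$) are holomorphic. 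The only structural difference is that the paper fixes the tubular neighborhood of $W$ once and for all, projects both $X'$ and $Y_j$ onto $W$ to get maps $p_\infty$ and $q_j$, and then sets $p_j=q_j^{-1}\circ p_\infty$ — this avoids the (mild) extra work of showing that the normal injectivity radii of the moving targets $Y_j$ are uniformly bounded below, which your direct projection $X'\to Y_j$ would require.

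One point of confusion worth flagging: the ``accumulated drift'' of $F_j\circ\cdots\circ F_2$ away from $\Lambda^{j-1}$, which you identify as the main obstacle, is actually irrelevant to this proposition. Here one compares $X'$, $Y_j=\Lambda^{j-1}.X'$, and $W$ purely as algebraic subvarieties of $\mathbb C^N$ equipped with the fixed cone metric $\omega_\xi$; $Y_j$ is an \emph{exact} rescaling of $X'$ by $\Lambda^{j-1}$, and the rate of convergence to $W$ is governed entirely by the weighted-degree gaps $\delta_\alpha$ of the generators, a polynomial statement with no drift term. The drift of the embeddings $\Phi_j$ is what prevents one from getting a definite polynomial rate in Proposition \ref{rough estimate} (comparing the Calabi-Yau metric $\omega$ with $\omega_\xi$), and that is precisely why the definite rate $\delta_0$ obtained here from algebraicity is so useful later. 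Similarly, the closeness of $X'$ to $W$ at infinity is not a consequence of both having asymptotic cone $\mathcal C$ — it follows directly from $W$ being, by definition, the $\Lambda$-weighted asymptotic cone of $X'$, i.e.\ the same generator estimate applied to $Y_1=X'$. Neither of these misattributions breaks your argument, but they conflate the algebraic picture on $\mathbb C^N$ with the analytic picture on $X$, which the paper is careful to keep separate.
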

\begin{remark}
Here the definite rate $\delta_0$ arises from the algebraicity of $W$. It is of crucial importance in our argument as it beats all the (arbitrarily small polynomial order) error when we compare $X$ to the asymptotic cone $\mathcal C$ using the 2-step degeneration theory (Proposition \ref{rough estimate}).
\end{remark}
\begin{proof}

By definition  we can find finitely many holomorphic polynomials $F_1, \cdots, F_m$ on $\C^N$ which generate the ideal of $X'$, such that $G_1, \cdots, G_m$ generate the ideal of $W$. Here $G_\alpha$ is the leading term of $F_\alpha$ with respect to the $\Lambda$ action. Then we know that for all $l\in \mathbb N$ we have $$F_\alpha(\Lambda^{-l}. z)=\lambda^{-l c_\alpha} (G_\alpha(z)+\lambda^{l \delta_\alpha}E_{\alpha l}(z))$$ for some constants  $\delta_{\alpha}, c_\alpha>0$ and some polynomials $E_{\alpha l}$ whose coefficients are uniformly bounded independent of $l$. Denote $F_{\alpha, l}= (\Lambda^{-l})^*F_\alpha$, then $F_{\alpha, 1}\cdots, F_{\alpha, l}$ generate the ideal of $Y_l$. Let $\delta_0=\min_\alpha \delta_\alpha. $

We define $p_\infty$ to be the normal projection map from $X'$ to $W$, i.e., for $x\in X'$, we let $p_\infty(x)$ to the unique point in $W$ that is closest to $X'$ with respect to the metric $\omega_\xi$. We need to show this is well-defined outside a compact set in $X$ and satisfies the desired properties. 
For $l\geqslant 1$  we denote by $A_l$ the annulus in $\C^N$ defined by $\lambda^{-l+1}\leqslant r_\xi\leqslant \lambda^{-l-1}$. Then $\Lambda^l$ maps $X'\cap A_l$ to $Y_l\cap A_1$. By the conical nature of $\omega_\xi$, it suffices to consider the normal projection from $Y_l\cap A_1$ to $W$ for $l\gg1$.  

\begin{claim}
  there is a covering of $W\cap A_1$ by finitely many open sets of the form $U_\gamma\cap W$ where $U_\gamma\subset\C^N$ such that for $l$ large there are holomorphic functions $h_{\gamma,l}: U_\gamma\cap W\rightarrow\C^N$ with $|\nabla^k_{\omega_\xi}(h_{\gamma, l}-\Id)|\leqslant C_k \lambda^{l\delta_0}$ for all $k\geqslant 0$ and  $Y_l\cap A_1\subset \cup_\gamma \text{Im}(h_{\gamma,l})$.
\end{claim}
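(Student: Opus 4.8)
The plan is to realize the local correction maps $h_{\gamma,l}$ via a holomorphic implicit function theorem argument applied to the defining equations. First I would fix a point $w\in W\cap A_1$ and, after a unitary change of coordinates on $\C^N$, arrange that the tangent space $T_wW$ is the first $n$ coordinate directions, so that near $w$ the variety $W$ is the graph of a holomorphic map $z''=\phi_W(z')$ where $z'\in\C^n$ and $z''\in\C^{N-n}$; this is possible because $W$ is smooth away from $0$ and $\mathfrak S\subset K$. By compactness of $W\cap A_1$ (inside the fixed annulus, away from the vertex) I can choose finitely many such points $w_\gamma$ and neighborhoods $U_\gamma$ so that the $U_\gamma\cap W$ cover $W\cap A_1$, and on each one $W$ is uniformly a graph with uniformly bounded holomorphic data and the $n\times n$ minors of the Jacobian of $(G_1,\dots,G_m)$ restricted to the "vertical" directions are uniformly invertible.

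Next, using the expansion $F_{\alpha,l}=\lambda^{l\delta_\alpha}$-close to $G_\alpha$ with uniformly bounded coefficients (from the displayed identity $F_\alpha(\Lambda^{-l}.z)=\lambda^{-lc_\alpha}(G_\alpha(z)+\lambda^{l\delta_\alpha}E_{\alpha l}(z))$, after clearing the common scalar $\lambda^{-lc_\alpha}$), I would solve on $U_\gamma\cap W$ the system $F_{\alpha,l}(z',z''+\text{correction})=0$ for the vertical correction as a function of $z'$. Concretely, for each $w=(z',\phi_W(z'))\in U_\gamma\cap W$ the holomorphic implicit function theorem — with quantitative constants coming from the uniform invertibility of the vertical Jacobian minors and the $C^0$-smallness of $F_{\alpha,l}-G_\alpha$ on $U_\gamma$ — produces a unique small $z''=z''(z')$ with $F_{\alpha,l}=0$, depending holomorphically on $z'$, and with $|z''(z')-\phi_W(z')|\le C\lambda^{l\delta_0}$. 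Setting $h_{\gamma,l}(w)=(z',z''(z'))$ gives a holomorphic map $U_\gamma\cap W\to\C^N$ landing in $Y_l$ (since the $F_{\alpha,l}$ cut out $Y_l$ set-theoretically near there — using that $Y_l$ is smooth away from $\mathfrak S$ and the ideal is generated by the $F_{\alpha,l}$, so the reduced structure agrees) with $h_{\gamma,l}=\Id+O(\lambda^{l\delta_0})$. The derivative estimates $|\nabla^k_{\omega_\xi}(h_{\gamma,l}-\Id)|\le C_k\lambda^{l\delta_0}$ follow by differentiating the implicit-function identity (or by Cauchy estimates on a slightly smaller neighborhood, since everything is holomorphic), uniformly in $\gamma$ and $l$.

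Finally, to get $Y_l\cap A_1\subset\bigcup_\gamma\mathrm{Im}(h_{\gamma,l})$, I would argue by (quantitative) closeness of $Y_l$ to $W$ in $A_1$: since $Y_l\to W$ as smooth submanifolds away from $\mathfrak S$, for $l$ large every point of $Y_l\cap A_1$ lies within $C\lambda^{l\delta_0}$ of $W\cap A_1$, hence in some $U_\gamma$; and within $U_\gamma$ the variety $Y_l$ is likewise a graph $z''=\psi_{l}(z')$ over (a slightly shrunk copy of) the same base, which by uniqueness in the implicit function theorem must coincide with the graph of $h_{\gamma,l}$. This shows the image of $h_{\gamma,l}$ exhausts $Y_l\cap A_1$ once we take the $U_\gamma$ to cover a neighborhood of $W\cap A_1$ rather than just $W\cap A_1$ itself. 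I expect the main obstacle to be the \emph{uniformity} of all constants in $l$ and $\gamma$: one must ensure that the neighborhoods $U_\gamma$, the invertibility bounds for the vertical Jacobians of $(G_\alpha)$, and the radii on which the implicit function theorem applies can all be chosen independently of $l$, which relies crucially on working inside the fixed compact annulus $A_1$ (bounded away from the vertex and from $\mathfrak S$) where $W$ is smooth and the rescaled equations $F_{\alpha,l}$ converge to $G_\alpha$ with uniformly bounded higher-order data.
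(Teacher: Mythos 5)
Your proposal is correct and follows essentially the same route as the paper: fix a point of $W\cap A_1$, pick the $N-n$ defining equations $G_\alpha$ that locally cut out $W$ with nondegenerate Jacobian, use the uniform $\lambda^{l\delta_0}$-closeness of the rescaled $F_{\alpha,l}$ to the $G_\alpha$ and a quantitative holomorphic implicit function theorem to produce $h_{\gamma,l}$, and then invoke compactness of $A_1$. The only cosmetic difference is the coordinate normalization --- the paper sets $\zeta_\alpha=G_\alpha$ so that $W$ becomes a coordinate subspace and the perturbation is immediate, whereas you use graph coordinates $z''=\phi_W(z')$ and solve explicitly; note also a small slip where you write ``$n\times n$ minors'' of the vertical Jacobian when it should be $(N-n)\times(N-n)$.
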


To prove the \textbf{Claim}, we fix a point $w\in W\cap A_1$. Then we can find a neighborhood $U\subset \C^N$, such that $W\cap U$ is given by the zero set of $N-n$ number of  $G_\alpha$'s (in the scheme-theoretic sense). For simplicity of notation we may assume these are $G_1, \cdots G_{N-n}$. Shrinking $U$ if necessary we may find local holomorphic coordinates $\{\zeta_1, \cdots, \zeta_N\}$ such that $\zeta_\alpha=G_\alpha$ for $\alpha=1, \cdots N-n$. Now we have  $F_{\alpha, l}=\lambda^{-lc_\alpha}(G_\alpha(z)+\lambda^{l\delta_\alpha} E_{\alpha l}(z))$.  It follows that for $l$ large the common zero set of $F_{\alpha, l}(\alpha=1, \cdots, N-n)$ is a smooth complex submanifold in $U$, so in particular it agrees with $Y_l\cap U$. Using the local  coordinates $\{\zeta_1, \cdots, \zeta_N\}$ it is easy to see that $Y_l\cap U$ is contained in the image of a holomorphic function $h_l:U\rightarrow \C^N$ such that $|\nabla^k_{\omega_\xi}(h_l-\Id)|_{\omega_\xi}\leqslant C_k\lambda^{l\delta_0}$ for all $k\geqslant 0$. Since $A_1$ is compact the \textbf{Claim} follows.

 \

Since the normal injectivity radius of $W\cap A_1$ is uniformly bounded, there is a tubular neighborhood $\mathcal N$ of $W\cap A_1$ such that the normal projection map $\Pi: \mathcal N\rightarrow W$ is smooth.  It follows from the \textbf{Claim} that for $l$ large, $Y_l\cap A_1\subset \mathcal N$.  So $p_\infty$ is well-defined and smooth outside a compact set in $X'$. It is straightforward to check that it satisfies the desired derivative bounds.   The estimates on the complex structures follow from the fact that $h_{\gamma, l}$ is holomorphic.

Now for each $j<\infty$ we can perform the above arguments uniformly (noticing that the closeness between $Y_j$ and $W$ gets improved as $j$ gets larger) to construct the corresponding projection maps $q_j: Y_j\setminus K\rightarrow W$ with uniform estimates. Then we let $p_j=q_j^{-1}\circ p_\infty$.  

\end{proof}

Now we fix $\epsilon_0\ll \delta_0$ and apply Theorem \ref{decomposition of kahler forms} with $\epsilon=\epsilon_0$ to get a decomposition on $X$
\begin{equation}\label{e:decomposition}
\omega=\beta+\sqrt{-1}\p\bp\psi.
\end{equation} Let $\psi_j'=\psi\circ p_j^{-1}$ and $r_j=r\circ p_j^{-1}.$  Denote $\gamma_j\equiv\sqrt{-1}\p\bp\psi_j'$.
By Lemma \ref{l:asymptotic decay},  Proposition \ref{rough estimate}, Corollary \ref{estimates for different covariant derivatives} and Proposition \ref{construction of projection maps}, we have
\begin{corollary}\label{c:metric close}
There exists a compact set $K\subset \C^N$ such that for all $j\in \mathbb Z_{\geqslant1}\cup\{\infty\}$, $\gamma_j$ is a K\"ahler form on $Y_j\setminus K$. Moreover,
\begin{itemize}
\item[(1).] for all $k\geqslant 0$ and $\delta>0$,
\begin{equation*}C_\delta^{-1}r_j^{1-\delta}\leqslant r_\xi\leqslant C_\delta r_j^{1+\delta} \text{ and }	|\nabla_{\gamma_j}^k(\gamma_j-\omega_\xi)|_{\gamma_j}\leqslant C_{\delta,k} r_j^{\delta-k},
\end{equation*}

	\item[(2).] for all $k\geqslant 0$ and $\delta<\delta_0-\epsilon_0$,  \begin{equation} \label{e:pull back metric close}
	|\nabla_\omega^k(p_j^*\gamma_j-\omega)|_{\omega}\leqslant C_{\delta, k} r_j^{-\delta-k}.
	\end{equation}
	\end{itemize}
\end{corollary}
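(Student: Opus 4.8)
The plan is to transport everything to $X$ via the projection maps $p_j$ and play it against the decomposition $\omega=\beta+\sqrt{-1}\,\p\bp\psi$ furnished by Theorem \ref{decomposition of kahler forms}, which we take with $\epsilon=\epsilon_0$, so that $|\nabla_\omega^k\beta|_\omega=O(r^{-2+\epsilon_0-k})$ and $|\nabla_\omega^k\psi|_\omega=O(r^{2+\epsilon_0-k})$. Throughout, I would use the following interchange principle, justified by Proposition \ref{rough estimate} and Corollary \ref{estimates for different covariant derivatives}: when estimating the size of a tensor at infinity one may freely replace $\omega_\xi$ by $\omega$, $\nabla_{\omega_\xi}$ by $\nabla_\omega$, and $r_\xi$ by $r$, at the cost only of an extra factor which is an arbitrarily small power of $r$ (of an order that may grow with the number of derivatives, but is absorbed into the constants). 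Since the estimates in (1) carry an $r^\delta$ slack and those in (2) only demand the definite gain $\delta_0-\epsilon_0>0$, this loss never matters.

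The computational heart is the identity
\[
p_j^*\gamma_j=\sqrt{-1}\,\p_{X'}\bp_{X'}\psi+E_j,
\]
where $E_j$ is $d$ of the $1$-form $d\psi\circ(p_j^*J_{Y_j}-J_{X'})$ up to a universal constant; this follows from writing $\sqrt{-1}\,\p_J\bp_J f$ as $d(df\circ J)$ up to a constant, with $df\circ J$ linear in $J$, so that pulling back by $p_j$ merely replaces $J_{Y_j}$ by $p_j^*J_{Y_j}$. I would then bound $E_j$ by combining $|\nabla_\omega^k(p_j^*J_{Y_j}-J_{X'})|_\omega\leqslant C_{\delta,k}r^{-\delta_0+\delta-k}$ (Proposition \ref{construction of projection maps} after the interchange) with the bounds on $\psi$ and the Leibniz rule, getting $|\nabla_\omega^kE_j|_\omega\leqslant C_{\delta,k}r^{\epsilon_0-\delta_0+\delta-k}$; since $\epsilon_0\ll\delta_0$, for each $\delta<\delta_0-\epsilon_0$ a small enough choice of the auxiliary slack gives $|\nabla_\omega^kE_j|_\omega\leqslant C_{\delta,k}r^{-\delta-k}$ on $X\setminus K$, uniformly in $j$. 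Part (2) is then immediate: $\sqrt{-1}\,\p_{X'}\bp_{X'}\psi=\omega-\beta$, so $p_j^*\gamma_j-\omega=E_j-\beta$, and as $|\nabla_\omega^k\beta|_\omega=O(r^{-2+\epsilon_0-k})$ is of even smaller order we obtain $|\nabla_\omega^k(p_j^*\gamma_j-\omega)|_\omega\leqslant C_{\delta,k}r^{-\delta-k}$; pulling this back and using $p_j^*r_j=r$ yields \eqref{e:pull back metric close}. In particular $p_j^*\gamma_j=\omega+O(r^{-\delta})$ is, for $r$ large, a positive $(1,1)$-form for $p_j^*J_{Y_j}$ (which is $O(r^{-\delta})$-close to $J_{X'}$); it is closed, being $\sqrt{-1}\,\p\bp$ of the real function $\psi_j'$; and $Y_j\setminus K$ is smooth since $K\supseteq\mathfrak S$. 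Hence $\gamma_j$ is K\"ahler on $Y_j\setminus K$, with $K$ uniform in $j$ because all the above estimates are uniform in $j$.

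For part (1), I would pull $\gamma_j-\omega_\xi$ back by $p_j$ and write
\[
p_j^*(\gamma_j-\omega_\xi)=(\omega-\omega_\xi)+E_j-\beta+(\omega_\xi-p_j^*\omega_\xi).
\]
By Proposition \ref{rough estimate}, $|\nabla_\omega^k(\omega-\omega_\xi)|_\omega\leqslant C_{\delta,k}r^{\delta-k}$; the terms $E_j$ and $\beta$ are of strictly smaller order by the above; and $|\nabla_\omega^k(\omega_\xi-p_j^*\omega_\xi)|_\omega=O(r^{-\delta_0+\delta-k})$ since $p_j$ is $C^\infty$-close to the identity in $\C^N$ at rate $r_\xi^{-\delta_0}$ while $\omega_\xi$ is a fixed $2$-homogeneous form; altogether $|\nabla_\omega^k(p_j^*(\gamma_j-\omega_\xi))|_\omega\leqslant C_{\delta,k}r^{\delta-k}$. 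Because $p_j^*\gamma_j=\omega+O(r^{-\delta})$, the metric $\gamma_j$ is uniformly equivalent to $(p_j^{-1})^*\omega$ and the two Levi-Civita connections differ by a tensor of negative order, so this estimate transfers through $p_j$ — using $p_j^*r_j=r$ — to $|\nabla_{\gamma_j}^k(\gamma_j-\omega_\xi)|_{\gamma_j}\leqslant C_{\delta,k}r_j^{\delta-k}$ on $Y_j\setminus K$. Finally, the radial comparison $C_\delta^{-1}r_j^{1-\delta}\leqslant r_\xi\leqslant C_\delta r_j^{1+\delta}$ is just \eqref{e:rough comparison of distance}, namely $C_\delta^{-1}r^{1-\delta}\leqslant r_\xi\leqslant C_\delta r^{1+\delta}$ on $X\setminus K$, transported through $p_j$: pushing forward changes $r_\xi$ only by a factor $1+O(r_\xi^{-\delta_0})$ and sends $r$ to $r_j$, so after absorbing the perturbation into $\delta$ the inequality holds on $Y_j\setminus K$.

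The real obstacle will be the bookkeeping in the middle paragraph: identifying the ``non-holomorphicity error'' $E_j$ and, above all, checking that the $r^\delta$-losses built into Proposition \ref{rough estimate} and Corollary \ref{estimates for different covariant derivatives} never overwhelm the definite gain $\delta_0$ coming from the algebraicity of $W$ (Proposition \ref{construction of projection maps}). The decisive input is the a priori choice $\epsilon_0\ll\delta_0$, which ensures that the growth $r^{2+\epsilon_0}$ of the potential $\psi$ is beaten, with a fixed margin, by the decay $r^{-\delta_0}$ of $p_j^*J_{Y_j}-J_{X'}$.
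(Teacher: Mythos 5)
Your proof is correct and is essentially the argument the paper leaves implicit: the paper cites exactly the four prior results you invoke (Lemma \ref{l:asymptotic decay}, Proposition \ref{rough estimate}, Corollary \ref{estimates for different covariant derivatives}, Proposition \ref{construction of projection maps}), together with the decomposition $\omega=\beta+\sqrt{-1}\p\bp\psi$ that defines $\gamma_j$, and your identity $p_j^*\gamma_j=\sqrt{-1}\p_{J_{X'}}\bp_{J_{X'}}\psi+E_j$ with $E_j$ built from $d\psi\circ(p_j^*J_{Y_j}-J_{X'})$ is precisely the mechanism through which the definite gain $r^{-\delta_0}$ beats the $r^{2+\epsilon_0}$ growth of the potential and the $r^{\pm\delta}$ slack in the interchange estimates. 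One small point worth recording: your observation that $\beta$ is of "even smaller order" than $E_j$ uses $\delta_0-\epsilon_0\leqslant 2-\epsilon_0$, i.e.\ $\delta_0\leqslant 2$; this is harmless since one may always shrink $\delta_0$ in Proposition \ref{construction of projection maps}, but it is cleanest to say so explicitly.
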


Next we  extend $\gamma_j$ to a global K\"ahler form on $Y_j^\circ$ in a uniformly controlled way. In the following, we denote by  $\omega_0=\frac{\ii}{2}\partial\pp r_0^2$ the K\"ahler form corresponding to the standard Euclidean metric on $\mathbb C^N$, where $r_0$ is the radius function for the Euclidean metric.

\begin{lemma}\label{construction of exact background kahler forms}
	There exist compact sets $K_0\subset K_1\subset \mathbb C^N$ containing a neighborhood of $\mathfrak S$, a constant $A_0$ and K\"ahler forms $\omega_j=\ii\partial\pp \psi_j$ on $Y_j^{\circ}$, such that for all $j\in \mathbb Z_{\geqslant1}\cup\{\infty\}$,
	\begin{itemize}
	    \item[(1).] $\omega_j=A_0\omega_0$ on $K_0\cap Y_j^\circ$,
	    \item[(2).] $\psi_j=\psi_j'$ on $Y_j\setminus K_1$.
	\end{itemize}

	\end{lemma}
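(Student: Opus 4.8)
The plan is to produce $\omega_j$ by gluing, over a fixed compact annulus, the Euclidean Kähler form $A_0\omega_0$ near the singular locus to the form $\gamma_j=\sqrt{-1}\partial\bar\partial\psi_j'$ of Corollary~\ref{c:metric close} near infinity, keeping exactness throughout.

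I would begin with two preliminary reductions. First, $\overline{\mathfrak S}$ is compact: for $j<\infty$, $\sing(Y_j)=\Lambda^{j-1}.\sing(X')$ since $Y_j$ is biholomorphic to $X'$, while $\sing(W)=\{0\}$ by the discussion in Section~\ref{two step degeneration}; as $\Lambda$ is a linear contraction of $\C^N$ fixing the origin, all of the sets $\Lambda^{j-1}.\sing(X')$ lie in $\{r_\xi\leqslant R_{\mathfrak S}\}$, $R_{\mathfrak S}:=\max_{\sing(X')}r_\xi$, and accumulate only at $0$. Hence I fix $R_0>\max\{R_{\mathfrak S},\ \sup_K r_\xi\}$ ($K$ the compact set of Corollary~\ref{c:metric close}) and $R_1>R_0+1$, and set $K_0=\{r_\xi\leqslant R_0\}\subset K_1=\{r_\xi\leqslant R_1\}$ in $\C^N$; with these choices $\psi_j'$ and $\gamma_j$ are defined on $Y_j\setminus K_0$ for every $j$. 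Second, replacing $\psi$ in \eqref{e:decomposition} by $\psi+C$ for a large constant (which changes neither $\omega$ nor the conclusions of Theorem~\ref{decomposition of kahler forms}) I may assume $\psi\geqslant 1$ on $X$, so that $\psi_j'\geqslant 1$ on $Y_j\setminus K_0$.

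Next I set up the interpolation. On the smooth locus, $A_0\omega_0=A_0\sqrt{-1}\partial\bar\partial(\tfrac12 r_0^2)$ restricts to a Kähler form with a global potential. On the fixed compact annulus $A=\{R_0\leqslant r_\xi\leqslant R_1\}$ one has, uniformly in $j$ (by Corollary~\ref{c:metric close}, Proposition~\ref{construction of projection maps} and compactness): $\gamma_j\geqslant c_0\omega_0$ and $\omega_\xi\geqslant c_0\omega_0$ for some $c_0>0$, $\tfrac12 r_0^2\leqslant M_0$, and $|\psi_j'|+|\nabla_{\omega_\xi}\psi_j'|_{\omega_\xi}\leqslant M_1$. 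I fix $A_0\in(0,\min\{c_0,M_0^{-1}\})$ and put $u:=\psi_j'-A_0\tfrac12 r_0^2$, so that on $A\cap Y_j^\circ$ one has $u\geqslant 1-A_0M_0>0$ and $\gamma_j-A_0\omega_0\geqslant 0$. Choosing a cutoff $\chi=\Theta(\tfrac12 r_\xi^2)$ with $\Theta$ smooth non-decreasing, $\equiv 0$ on $(-\infty,\tfrac12R_0^2]$ and $\equiv 1$ on $[\tfrac12 R_1^2,\infty)$, I define
\[
\psi_j:=A_0\tfrac12 r_0^2+\chi\,u=(1-\chi)\,A_0\tfrac12 r_0^2+\chi\,\psi_j'\qquad\text{on }Y_j^\circ,
\]
which is well-defined and smooth because $\chi$ is supported in $\{r_\xi\geqslant R_0\}$, where $\psi_j'$ is defined. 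By construction $\psi_j\equiv A_0\tfrac12 r_0^2$ on $K_0\cap Y_j^\circ$ and $\psi_j\equiv\psi_j'$ on $Y_j\setminus K_1$, so (1) and (2) hold at once, and the problem reduces to showing that $\omega_j:=\sqrt{-1}\partial\bar\partial\psi_j$ is positive, the only nontrivial region being $A$.

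This positivity is the main obstacle. With $\rho:=\tfrac12 r_\xi^2$ (plurisubharmonic, $\sqrt{-1}\partial\bar\partial\rho=\omega_\xi$) one computes on $A\cap Y_j^\circ$
\[
\omega_j=A_0\omega_0+\chi(\gamma_j-A_0\omega_0)+u\,\Theta'(\rho)\,\omega_\xi+u\,\Theta''(\rho)\,\sqrt{-1}\partial\rho\wedge\bar\partial\rho+\Theta'(\rho)\bigl(\sqrt{-1}\partial\rho\wedge\bar\partial u+\sqrt{-1}\partial u\wedge\bar\partial\rho\bigr).
\]
The first three terms are $\geqslant A_0\omega_0>0$ (using $u>0$, $\Theta'\geqslant 0$, $\omega_\xi>0$ and $A_0\leqslant c_0$). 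The last term is indefinite but dominated, for any $t>0$, by $t\Theta'\sqrt{-1}\partial\rho\wedge\bar\partial\rho+t^{-1}\Theta'|\nabla_{\omega_\xi}u|^2_{\omega_\xi}\,\omega_\xi$, with $|\nabla_{\omega_\xi}u|_{\omega_\xi}$ bounded on $A$ uniformly in $j$. The plan is then to build the profile $\Theta$ so that on the portion of $[\tfrac12 R_0^2,\tfrac12R_1^2]$ where $\Theta''\geqslant 0$ the nonnegative term $u\Theta''\sqrt{-1}\partial\rho\wedge\bar\partial\rho$ absorbs the first piece of the cross term, while on the rest $|\Theta''|$ is kept small relative to $\Theta'$ so that $u\Theta'\omega_\xi$ controls $u|\Theta''|\sqrt{-1}\partial\rho\wedge\bar\partial\rho$, the residual $t^{-1}\Theta'|\nabla_{\omega_\xi}u|^2\omega_\xi$ being absorbed into $A_0\omega_0$; one then gets $\omega_j\geqslant\tfrac12 A_0\omega_0>0$ on $A$, hence a Kähler form on all of $Y_j^\circ$. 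I expect carrying out this absorption cleanly—a standard but fiddly piece of local Kähler gluing—to be the only real work; since every constant above depends only on $X$, $\mathcal C$, $\omega_\xi$ and the fixed data $R_0,R_1,A_0$, the construction is automatically uniform in $j$ by Corollary~\ref{c:metric close} and the smooth convergence $Y_j\to W$ away from $0$, as needed in the subsequent sections.
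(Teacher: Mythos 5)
Your approach differs from the paper's and contains a genuine gap precisely at the step you flag as ``fiddly''. You interpolate the two potentials over a fixed compact annulus, writing $\psi_j=A_0\tfrac12 r_0^2+\chi u$, which inevitably produces the cross term $\Theta'(\rho)\bigl(\ii\partial\rho\wedge\bar\partial u+\ii\partial u\wedge\bar\partial\rho\bigr)$. After Cauchy--Schwarz and absorbing $\ii\partial\rho\wedge\bar\partial\rho\leqslant 2\rho\,\omega_\xi$, positivity of $\omega_j$ forces (up to harmless constants comparing $\omega_0$ and $\omega_\xi$) the inequality
\begin{equation*}
A_0\lambda_0 + u\bigl(\Theta'+2\rho\Theta''\bigr) - 2\sqrt{2\rho}\,|\nabla u|\,\Theta' \;\geqslant\; 0
\end{equation*}
on the annulus. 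Since $\Theta$ rises from $0$ to $1$ with $\Theta'$ vanishing at both ends, the quantity $\Theta'+2\rho\Theta''=2\rho^{1/2}\tfrac{d}{d\rho}(\rho^{1/2}\Theta')$ is necessarily negative on a definite portion of the annulus, and no choice of profile makes this negativity small relative to $\Theta'$ without widening the annulus. But widening does not help: by your own constraint $A_0\leqslant M_0^{-1}\lesssim R_1^{-2}$ the term $A_0\lambda_0$ shrinks, while $u\sim\psi_j'\lesssim r_\xi^{2+\epsilon}$ and $|\nabla u|\lesssim r_\xi^{1+\epsilon}$ grow by Theorem~\ref{decomposition of kahler forms}, so the negative contributions beat the positive ones. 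Shifting $\psi$ by a constant to make $u$ large also fails, since $u$ multiplies the unavoidably negative factor $\Theta'+2\rho\Theta''$.

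The paper's construction avoids exactly this problem by two devices that your proposal does not use. First, it exploits the \emph{quantitative} positivity $\ii\partial\bar\partial\psi_j'\geqslant c\,r_\xi^{-1/2}\omega_\xi$ obtained from Proposition~\ref{rough estimate} and Proposition~\ref{construction of projection maps}, not merely the qualitative bound $\gamma_j\geqslant c_0\omega_0$ on a compact annulus. Second, the gluing is \emph{additive} rather than interpolative: the extension of $\psi_j'$ inward by a cutoff is confined to a fixed compact region where its $\ii\partial\bar\partial$-defect is a bounded constant $-\tfrac{A}{2}\omega_0$, and one then adds a bump $\chi\circ\phi$ that is linear (slope $A_0$, which may be taken \emph{large}, $A_0>A\epsilon^{-1}$) near the singularity and is then cut down to zero with the controlled decay $\chi'\geqslant-\tfrac{c}{8}t^{-1/2}$, $\chi''\geqslant-\tfrac{c}{8}t^{-3/2}$. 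Because $\chi\circ\phi$ is \emph{added} to $\psi_j'$, there is no $\ii\partial\chi\wedge\bar\partial\psi_j'$ cross term in the outer region, and the error $\ii\partial\bar\partial(\chi\circ\phi)\gtrsim -c\,\phi^{-1/2}\omega_\xi\sim -c\,r_\xi^{-1}\omega_\xi$ is strictly dominated by the $r_\xi^{-1/2}$ positivity of $\ii\partial\bar\partial\psi_j'$. Both the allowance for a large $A_0$ and the rate-matching between the cutoff's derivatives and the decaying positivity are essential and are absent from your interpolation scheme.
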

\begin{proof}
	By Proposition \ref{rough estimate} and Proposition \ref{construction of projection maps}, we know that there exist constants $c, T_1>0$ such that 
$\ii\partial\pp \psi_j'\geqslant cr_\xi^{-\frac{1}{2}}\omega_{\xi}$
	on $\{r_{\xi}\geqslant T_1/2\}\cap Y_j$. 
Using a cut-off function on $\C^N$  we may extend   $\psi_j'$ smoothly to $Y_j^\circ$,  so that it vanishes when $r_{\xi}\leqslant T_1/2$, equals $\psi\circ p_j^{-1}$ when $r_{\xi}\geqslant T_1$ and satisfies
$	\ii\partial\pp \psi_j'\geqslant -\frac{A}{2}\omega_{0} 
$ on  $\{r_{\xi}\leqslant T_1\}\cap Y_j$ for some constant $A>0$ independent of $j$.

Choose a smooth increasing and convex function $u: \mathbb{R} \rightarrow \mathbb{R}$ such that $u(t)=T_1^2 / 3$ for $t \leqslant T_1^2 / 2$ and $u(t)=t$ for $t \geqslant T_1^2$. Define the function  $r_{1}^2= u \circ r^2_{\xi}$ on $\C^N$. Then $\ii\partial\pp r_1^2 $ is globally non-negative  and equals $\omega_{\xi}$ on $\{r_{\xi}\geqslant T_1\}$.  Choose a cutoff function $\rho:\mathbb R\rightarrow\mathbb R$ which equals 1 when ${t\leqslant 2T_1^2}$ and equals $0$ when ${t\geqslant 4T_1^2}$. Then it is straightforward to check that there exists a small positive constant $\epsilon$ such that $\phi=\epsilon(\rho \circ r_{\xi}^2)\cdot r_0^2+r_1^2$ satisfies that $\sqrt{-1}\p\bp\phi$ is a K\"ahler form on $\C^N$. By construction there exists a constant $T>1$ such that $\{r_{\xi}\leqslant 2T_1\}\subseteq \{\phi\leqslant T\}$.

It is easy to find a smooth function $\chi:[0,\infty)\rightarrow\mathbb R$ such that the following hold
	\begin{itemize}
\item	$\chi(t)=A_0t$ if $t\leqslant T$, where $A_0$ is a constant bigger than $A\epsilon^{-1}$;
\item $\chi(t)=0$ for $t\gg1$;
\item for all $t>0$ we have $\chi''\geqslant-\frac{c}{8}t^{-\frac{3}{2}}$ and $\chi'\geqslant -\frac{c}{8}t^{-\frac{1}{2}}$. 
	\end{itemize}Then $\psi_j=\chi\circ \phi+\psi_j'$ satisfies the desired properties. 
	\end{proof}
	
	Recall that we have the crepant resolution map $\pi: X\rightarrow X'$, thus for $j\in \mathbb Z_{\geqslant 1}$ the composition map $\Lambda^j\circ \pi$ is a crepant resolution of $Y_j$. We denote the latter by $\pi_j:\hat Y_j=X\rightarrow Y_j$. For all $j\in \mathbb Z_{\geqslant 1}$ and $\epsilon\in (0,\lambda^{4j})$ (we emphasize here that the domain of $\epsilon$ depends on $j$), we have the following family of K\"ahler forms on $\hat Y_j$
$$\widetilde{\omega}_{j,\epsilon}\equiv \epsilon \beta +\pi_j^*\omega_j=\epsilon\beta+\ii\partial\pp \pi_j^*\psi_j,$$ where $\beta$ is given in \eqref{e:decomposition}. Notice that $\beta$ and $\widetilde{\omega}_{j,\epsilon}$ can also be naturally viewed as forms on $Y_j^\circ$ via $(\pi_j^{-1})^*$. The following lemma (together with Corollary \ref{c:metric close}-(2)) shows that they are uniformly asymptotic to the cone $\mathcal C$.
\begin{lemma}\label{decay of beta}
    There exists a compact set $K\subset \mathbb C^N$ such that for all $j\in \mathbb Z_{\geqslant 1}$ and $k\geqslant0$ on $Y_j\setminus K$  we have
    \begin{equation*}
        \lambda^{4j}\left|\nabla^k_{\omega_j}(\pi_j^{-1})^*\beta\right|_{\omega_j}\leqslant C_k r_j^{-\frac{1}{2}-k}.
    \end{equation*}
\end{lemma}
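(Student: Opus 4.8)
The plan is to transfer the decay estimates for $\beta$ that we already have on $X$ (from Theorem \ref{decomposition of kahler forms}) over to $Y_j$ via the projection maps $p_j$, and then control the discrepancy between the reference metrics $\omega$ on $X$ and $\omega_j$ on $Y_j$ uniformly in $j$. Recall that outside a compact set $\beta$, $\widetilde\omega_{j,\epsilon}$ and $\omega_j$ are all regarded as forms on $Y_j^\circ$ through $(\pi_j^{-1})^*$; since $\pi_j$ is biholomorphic away from its exceptional set, all computations on $Y_j\setminus K$ can equivalently be done on $X\setminus K'$ after pulling back, so I may work with $\beta$, $\omega$ (via $p_j^*$), and $p_j^*\gamma_j$.

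First I would recall that by Theorem \ref{decomposition of kahler forms}(2) we have $|\nabla^k_\omega\beta|_\omega=O(r^{-2+\epsilon_0-k})$ on $X\setminus K$. The point is that $r_j = r\circ p_j^{-1}$ is comparable to $r$ up to the $\delta$-losses of Corollary \ref{c:metric close}(1), and since $\epsilon_0\ll\delta_0$, the exponent $-2+\epsilon_0$ is still $\leqslant -1/2$ after absorbing the small polynomial errors and the scaling factor $\lambda^{4j}$ (note $\Lambda^{j-1}$ scales the cone radius by $\lambda^{-(j-1)}$ and $\beta$ being essentially a $(1,1)$-form of ``weight two'' picks up the corresponding factor, which is exactly the $\lambda^{4j}$-type normalization appearing in the statement — I should track this carefully but it is bookkeeping). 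So pulling back via $p_j$ and using that $p_j$ is $r_\xi^{-\delta_0}$-close to the identity in all derivatives (Proposition \ref{construction of projection maps}), together with the two-sided bound $C_\delta^{-1}r_j^{1-\delta}\leqslant r_\xi\leqslant C_\delta r_j^{1+\delta}$, gives $\lambda^{4j}|\nabla^k_{\gamma_j}(\pi_j^{-1})^*\beta|_{\gamma_j}\leqslant C_k r_j^{-1/2-k}$ where the derivatives and norms are taken with respect to $\gamma_j$.

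Next I would upgrade from $\gamma_j$ to $\omega_j$. By Lemma \ref{construction of exact background kahler forms}, $\omega_j=\ii\partial\pp\psi_j$ and $\psi_j=\psi_j'$ on $Y_j\setminus K_1$, so $\omega_j=\gamma_j$ outside $K_1$. Hence on $Y_j\setminus K$ with $K\supset K_1$ the two metrics literally agree, and the estimate with $\gamma_j$ replaced by $\omega_j$ is immediate — this is where uniformity in $j$ comes from, since $K_1$ is a fixed compact set in $\C^N$ independent of $j$. The only care needed is to make the compact set $K$ in the statement large enough to contain $K_1$ and the region where Corollary \ref{c:metric close} and Proposition \ref{construction of projection maps} apply with the chosen $\delta$'s; all constants $C_k$ are then uniform in $j$ because every ingredient (Theorem \ref{decomposition of kahler forms}, Corollary \ref{c:metric close}, Proposition \ref{construction of projection maps}) is uniform in $j$.

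The main obstacle is not any single hard estimate but rather the correct bookkeeping of the three sources of exponents: the genuine decay $-2+\epsilon_0$ of $\beta$, the small $\delta$-losses from comparing $r$, $r_j$, $r_\xi$, and — most delicately — the scaling behavior under $\Lambda^{j-1}$ that produces the factor $\lambda^{4j}$. I would need to verify that the inequality $\epsilon_0 \ll \delta_0$ leaves enough room so that $-2+\epsilon_0+(\text{all }\delta\text{-losses}) \leqslant -1/2$, which it does since we are free to take $\delta$ as small as we like relative to $\epsilon_0$ and $\epsilon_0$ as small as we like relative to $\delta_0 \leqslant 1$; in fact the exponent $-1/2$ in the statement is deliberately pessimistic to make this slack obvious. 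Making sure that $\beta$, which is compactly-supported-modified in Step 5 of Theorem \ref{decomposition of kahler forms}, behaves correctly under $(\pi_j^{-1})^*$ near the (uniformly bounded) region where the cutoffs live is the last routine point, again handled by enlarging $K$.
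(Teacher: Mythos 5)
Your outline correctly names the raw ingredients (the decay of $\beta$ from Theorem~\ref{decomposition of kahler forms}, the comparisons in Proposition~\ref{rough estimate} and Corollary~\ref{c:metric close}, the projection maps of Proposition~\ref{construction of projection maps}, and the dilations $\Lambda^j$), and your Step~2 is fine since $\omega_j=\gamma_j$ outside the fixed compact set $K_1$ by Lemma~\ref{construction of exact background kahler forms}(2). But the central mechanism is misattributed, and the ``bookkeeping'' you defer to is precisely where the content of the lemma lives. You write that the estimate follows from ``pulling back via $p_j$ and using that $p_j$ is $r_\xi^{-\delta_0}$-close to the identity.'' However $p_j$ is only the map that transports the K\"ahler potential $\psi$ and hence controls $\gamma_j$; it is \emph{not} the map under which $\beta$ is transported to $Y_j$. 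That transport is $(\pi_j^{-1})^*$, which on the end is the large dilation $\Lambda^{-j}$, not a near-identity map. So the essential comparison is between the \emph{rescaled} form $(\Lambda^{-j})^*\beta$ and the \emph{unrescaled} ambient metric $\omega_j$ (close to $\omega_\xi$, not to $\lambda^{-2j}\omega_\xi$). The paper's proof is the explicit chain \eqref{replacement of omega and omega_xi}: compare $|(\Lambda^{-j})^*\beta|_{\omega_j}$ to $|(\Lambda^{-j})^*\beta|_{\omega_\xi}$, use the exact homogeneity $\Lambda^*\omega_\xi=\lambda^{2}\omega_\xi$ and $(\Lambda^{-j})^*r_\xi=\lambda^{-j}r_\xi$, pass to $|\beta|_\omega$ via Proposition~\ref{rough estimate}, and only then invoke the decay of $\beta$; this chain is not a corollary of $p_j\approx\Id$ and the $r_\xi\sim r_j$ bound alone.

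Two further inaccuracies. First, the factor $\lambda^{4j}$ is not produced by ``$\beta$ being a $(1,1)$-form of weight two'' --- weight counting under $\Lambda^{-j}$ would give $\lambda^{\pm2j}$, and in fact carrying out the chain shows $|(\Lambda^{-j})^*\beta|_{\omega_j}=O(\lambda^{-\epsilon j}r_j^{-2+\epsilon})$ for any small $\epsilon>0$; the $\lambda^{4j}$ is a deliberately generous choice (tied to the requirement $\epsilon\in(0,\lambda^{4j})$ in the definition of $\widetilde\omega_{j,\epsilon}$), not a weight-counting identity. Second, the inequality $\epsilon_0\ll\delta_0$ is not the relevant slack here: $\delta_0$ encodes the algebraic convergence rate of $Y_j$ to $W$ and never enters this estimate, while the small exponent losses you must control are the free parameters $\epsilon_1,\epsilon_2,\delta$ of Proposition~\ref{rough estimate} and Corollary~\ref{c:metric close}, which can all be taken arbitrarily small independently of $\delta_0$; the saving grace is that $\lambda<1$, so any surplus $\lambda^{-cj}$ with $c$ small is swallowed by $\lambda^{4j}$. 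Finally, for $k\geqslant 1$ you need the tensor identity \eqref{covariant derivative with respect to different metrics} together with the connection-difference bound of Corollary~\ref{estimates for different covariant derivatives}, neither of which your proposal invokes.
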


\begin{proof}
Outside a compact set, we identify $X$ with $X'$ and therefore we may just view $\pi_j$ as $\Lambda^j$. Let us first prove the case $k=0$. By Corollary \ref{c:metric close} and the fact $\Lambda^*\omega_\xi=\lambda^{2}\omega_\xi$, we know that for any $\epsilon_1,\epsilon_2>0$, there exists constants $C_{\epsilon_1}$ and $C_{\epsilon_2}$ such that
\begin{equation}\label{replacement of omega and omega_xi}
\begin{aligned}
 \left|(\Lambda^{-j})^*\beta\right|_{\omega_j}&\leqslant C_{\epsilon_1}\left|(\Lambda^{-j})^*\beta\right|_{\omega_{\xi}} r_{\xi}^{\epsilon_1}\leqslant C_{\epsilon_1}(\Lambda^{-j})^*(\left|\beta\right|_{(\Lambda^j)^*\omega_{\xi}}) r_{\xi}^{\epsilon_1}\\
 &\leqslant C_{\epsilon_1}\lambda^{-2j} (\Lambda^{-j})^*(\left|\beta\right|_{\omega_{\xi}})r_{\xi}^{\epsilon_1}\leqslant C_{\epsilon_1}C_{\epsilon_2}\lambda^{-2j} (\Lambda^{-j})^*(\left|\beta\right|_{\omega}r_{\xi}^{\epsilon_2})r_{\xi}^{\epsilon_1} \\
 &\leqslant C_{\epsilon_1}C_{\epsilon_2}\lambda^{-2j-\epsilon_2j}(\Lambda^{-j})^*(r^{-1}) r_{\xi}^{\epsilon_1+\epsilon_2},
\end{aligned}
\end{equation}where for the last inequality we used the estimate of $\beta$ with respect to the metric $\omega$ obtained in Theorem \ref{decomposition of kahler forms}. 
By choosing $\epsilon_1$ and $\epsilon_2$ small enough, one can directly show that there exists a positive constant $C_0$ such that
\begin{equation*}
     \lambda^{4j}\left|(\Lambda^{-j})^*\beta\right|_{\omega_j}\leqslant C_0 r_j^{-\frac{1}{2}}.
\end{equation*}
This proves the case $k=0$. Note that suppose we have two Riemannian metrics $g_1$ and $g_2$ then for any tensor $\eta$ and another fixed background metric $g_0$, we have
\begin{equation}\label{covariant derivative with respect to different metrics}
\begin{aligned}
 \left|\nabla_{g_1}^k\eta\right|_{g_0}\leqslant& C_k\sum_{\substack{k_1+k_2=k\\k_i\geqslant0}} \left|\nabla_{g_1}-\nabla_{g_2}\right|_{g_0}^{k_1}\left|\nabla_{g_2}^{k_2}\eta\right|_{g_0}\\
 &+C_k\sum_{\substack{k_1+k_2+k_3=k-1\\k_i\geqslant0, k_1\geqslant1}} \left|\nabla_{g_2}^{k_1}(\nabla_{g_1}-\nabla_{g_2})\right|_{g_0}\left|\nabla_{g_1}-\nabla_{g_2}\right|_{g_0}^{k_2}\left|\nabla_{g_2}^{k_3}\eta\right|_{g_0},
\end{aligned}
\end{equation} where we view $\nabla_{g_1}-\nabla_{g_2}$ as a tensor and  make a convention that $\nabla^0_{g_2}\eta=\eta$.
Then the proof for $k\geqslant 1$ follows from the same argument as we did in \eqref{replacement of omega and omega_xi} by using \eqref{covariant derivative with respect to different metrics} together with Corollary \ref{estimates for different covariant derivatives} and Theorem \ref{decomposition of kahler forms}.
\end{proof}

	Recall that $\Omega$ is the given holomorphic volume form on $X$. Since $\Omega$ is holomorphic and satisfies the equation $\omega^n=(\ii)^{n^2}\Omega\wedge \ols{\Omega}$, we obtain $\nabla_{\omega}\Omega=0$. We can view $\Omega$ naturally as a holomorphic volume form on $X'=Y_1$ (notice the canonical bundle of each $Y_j$ is trivial).
	
\begin{lemma}\label{construction of holomorphic volume form}
For any $j\in \mathbb Z_{\geqslant 1}$,	there exists a  nowhere vanishing holomorphic volume form $\Omega_j$ on $Y_j$ satisfying:
	\begin{itemize}
		\item [(1).]there exist a constant $\delta_1>0$  and a compact set $K\subset \C^N$ such that on $Y_j\setminus K$, for all $k\geqslant 0$  the function $h_j=\log\left(\frac{\Omega_j\wedge\ols{\Omega}_j}{(p_j^{-1})^*(\Omega\wedge\ols{\Omega})}\right)$ satisfies  
	\begin{equation*}
		|\nabla^k_{\omega_j}h_j|\leqslant C_k r_j^{-\delta_1-k}.
	\end{equation*} 
	\item[(2).] for every compact set $K_1\subset \mathbb C^N$ we have
	\begin{equation*}
		\sup_{j\in \mathbb Z_{\geqslant1}}\int_{K_1\cap Y_j}(\ii)^{n^2}\Omega_j\wedge\ols{\Omega}_j<\infty.
	\end{equation*}

	\end{itemize}
	\end{lemma}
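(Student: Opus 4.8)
The plan is to build $\Omega_j$ as a holomorphic correction, near infinity, of the form $(p_j^{-1})^*\Omega$, exploiting the triviality of $K_{Y_j}$ and a weighted H\"ormander $L^2$ estimate, much as in the proof of Theorem~\ref{decomposition of kahler forms}. On $Y_j\setminus K$ let $\Theta_j$ be the $(n,0)$-part of $(p_j^{-1})^*\Omega$ with respect to $J_{Y_j}$. Since $\Omega$ is holomorphic, $(p_j^{-1})^*\Omega$ is $d$-closed, so $\bar\partial\Theta_j$ is a universal algebraic expression in the non-$(n,0)$ components of $(p_j^{-1})^*\Omega$ and their covariant derivatives; by Proposition~\ref{construction of projection maps} the tensor $p_j^*J_{Y_j}-J_{X'}$, hence those components, is $O(r_\xi^{-\delta_0-k})$ in $C^k$ \emph{uniformly in $j$}. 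Therefore, for $K$ large uniformly in $j$, $\Theta_j$ is nowhere vanishing on $Y_j\setminus K$, with $|\Theta_j-(p_j^{-1})^*\Omega|_{\omega_j}\leqslant C r_j^{-\delta_0}|\Theta_j|_{\omega_j}$ and $|\nabla^k_{\omega_j}\bar\partial\Theta_j|_{\omega_j}\leqslant C_k r_j^{-\delta_0-1-k}|\Theta_j|_{\omega_j}$ (passing between $r_\xi$ and $r_j$ by Corollary~\ref{c:metric close}). Recall also that, $\pi_j\colon\hat Y_j=X\to Y_j$ being a crepant resolution of a normal variety, $K_X=\pi_j^*K_{Y_j}$ and $\pi_{j*}\mathcal O_X=\mathcal O_{Y_j}$; hence $\Omega$ descends to a nowhere vanishing holomorphic volume form $\Omega_j^{\mathrm{alg}}=(\Lambda^{1-j})^*\Omega_{X'}$ on $Y_j$, where $\Omega_{X'}$ is the descent of $\Omega$ to $X'$, and every holomorphic section of $\pi_j^*K_{Y_j}$ over $X$ descends to $Y_j$.

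Fix a large constant $R_*$ and then a small constant $\eta$, both independent of $j$, and set $S_j=\{R_*\leqslant r_\xi\leqslant(1+\eta)R_*\}\cap Y_j$. On $S_j$ the function $\Theta_j/\Omega_j^{\mathrm{alg}}$ is close to holomorphic and varies by a factor $1+O(\eta)$; choosing $c_j$ to be its value at one point of $S_j$ (so $c_j$ is comparable to $\lambda^{n(j-1)}$, up to a factor coming from the comparison between $\omega$ and $\omega_\xi$), the form $c_j\Omega_j^{\mathrm{alg}}$ differs from $\Theta_j$ on $S_j$ by a factor $1+O(\eta)+O(R_*^{-\delta_0})$. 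With a cut-off $\chi$ equal to $1$ on $\{r_\xi\geqslant(1+\eta)R_*\}$ and $0$ on $\{r_\xi\leqslant R_*\}$, the glued section $\widehat\Theta_j=\chi\,\Theta_j+(1-\chi)\,c_j\Omega_j^{\mathrm{alg}}=\phi_j\cdot c_j\Omega_j^{\mathrm{alg}}$ is a smooth, \emph{nowhere vanishing} section of $K_{Y_j}$ over $Y_j$ (near $\mathrm{Sing}(Y_j)$ it equals the holomorphic nonzero $c_j\Omega_j^{\mathrm{alg}}$), equal to $\Theta_j$ near infinity, and $\bar\partial\phi_j$ is pointwise small relative to $|\phi_j|$ everywhere and $O(r_j^{-\delta_0-1}|\phi_j|)$ at infinity, uniformly in $j$. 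Pulling $\phi_j$ back to $X$, we solve $\bar\partial w_j=\bar\partial\phi_j$ on the complete K\"ahler manifold $(X,\omega)$ by Theorem~\ref{Hormander}(1), with $\varphi_M=\nu\varphi$ a suitable moderate multiple of the plurisubharmonic function of Lemma~\ref{psh function on a kahler cone}-(2) (chosen so that $w_j$ decays). Choosing $\eta$ small enough, the weighted $L^2$-norm of $\bar\partial\phi_j$, and hence of $w_j$, is small, and $w_j$ decays at infinity at a definite rate; together these give $|w_j|<|\phi_j|$ on all of $Y_j$. Then $(\phi_j-w_j)\,c_j\Omega_j^{\mathrm{alg}}$ is a nowhere vanishing holomorphic section of $K_X=\pi_j^*K_{Y_j}$, so it descends to the desired nowhere vanishing holomorphic volume form $\Omega_j$ on $Y_j$, with $\Omega_j=(1-w_j/\phi_j)\,\Theta_j$ near infinity.

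Feeding the decay of $\bar\partial\phi_j$ and the $L^2$-bound on $w_j$ into Proposition~\ref{c:elliptic regularity}, applied at the scale set by $r_j$, gives $|\nabla^k_{\omega_j}(w_j/\phi_j)|_{\omega_j}=O(r_j^{-\delta_1-k})$ for some $\delta_1\in(0,\delta_0)$; combined with $\Theta_j=(p_j^{-1})^*\Omega\,(1+O(r_j^{-\delta_0}))$ this yields (1). For (2), since $|w_j|<|\phi_j|$ the integral over $K_1$ is bounded by $C\int_{K_1\cap Y_j}\widehat\Theta_j\wedge\overline{\widehat\Theta}_j$, whose dominant part equals $|c_j|^2\int_{\Lambda^{1-j}(K_1)\cap X'}\Omega_{X'}\wedge\overline{\Omega}_{X'}$ after the substitution $\Lambda^{1-j}$; since $K_1$ is fixed, $\Lambda^{1-j}(K_1)\subset\{r_\xi\leqslant C\lambda^{-(j-1)}\}$, and because $\Omega_{X'}\wedge\overline{\Omega}_{X'}$ is locally integrable on $X'$ (its singularities are crepant-resolved, hence canonical) with $\int_{\{r_\xi\leqslant T\}\cap X'}\Omega_{X'}\wedge\overline{\Omega}_{X'}$ comparable to $|\Omega_{X'}|_{\omega_\xi}^2$ at $r_\xi\sim T$ times $T^{2n}$ for $T$ large, the normalization built into $c_j$ makes this quantity bounded uniformly in $j$.

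The main obstacle is precisely this uniformity in $j$. As $j\to\infty$ the region of $Y_j$ carrying $\Theta_j$, and its preimage in $\hat Y_j=X$, drift to infinity, the constants $c_j$ run off like $\lambda^{n(j-1)}$, and $\omega$ and $\omega_\xi$ are comparable only up to a factor $r^{o(1)}$ (Proposition~\ref{rough estimate}), so $c_j$ is not a clean power of $\lambda$. One must therefore define $c_j$ by matching the genuine scale of $\Theta_j$, choose $S_j$, the cut-offs and the weight $\nu\varphi$ relative to $r_j$, and run the H\"ormander and elliptic estimates after rescaling so that every constant stays independent of $j$, relying on the uniform-in-$j$ statements of Proposition~\ref{construction of projection maps}, Corollary~\ref{c:metric close} and Lemma~\ref{construction of exact background kahler forms}. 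A secondary point, built into the gluing, is to ensure that $\widehat\Theta_j$---hence $\Omega_j$---is genuinely nowhere vanishing, not merely nonzero near infinity.
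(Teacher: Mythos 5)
Your overall strategy — take the $(n,0)$-part $\Theta_j$ of $(p_j^{-1})^*\Omega$, which fails to be holomorphic by an amount of order $r_j^{-\delta_0}$, and correct it by solving a $\bar\partial$-equation via H\"ormander — is exactly the paper's. The two arguments diverge in how the correction is arranged and how nowhere-vanishing is obtained. The paper simply cuts $\Theta_j$ off (to zero) near $\mathrm{Sing}(Y_j)$, solves $\bar\partial v_j=\bar\partial(\chi\Theta_j)$ directly on $Y_j^\circ$ against the weight $(n-1+\tau)\varphi_j$, and then observes that $\pi_j^*\Omega_j/\Omega$ is a holomorphic function on $X$ of sub-polynomial growth, hence a nonzero constant; nowhere-vanishing and the identification $\Omega_j=\mathrm{const}\cdot(\Lambda^{-j})^*\Omega$ come for free, and (2) drops out of the weighted $L^2$ bound for $v_j$. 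You instead glue $\Theta_j$ to the algebraic form $c_j\Omega_j^{\mathrm{alg}}$ along a thin shell $S_j$, solve on $X$ with $\omega$, and try to enforce nowhere-vanishing through the pointwise inequality $|w_j|<|\phi_j|$. That extra machinery is not just heavier — it creates the problem discussed below — whereas the ``degree zero $\Rightarrow$ constant'' observation sidesteps it entirely. (Also a small point: Theorem~\ref{Hormander}(1) is for $(n,q)$-forms; to solve for the function/$(0,1)$-form pair you want part (2), which is fine here since $\omega$ is Ricci-flat.)

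The concrete gap is in the clause ``Choosing $\eta$ small enough, the weighted $L^2$-norm of $\bar\partial\phi_j$\dots is small.'' The $\bar\partial\chi$-term of $\bar\partial\phi_j$ has magnitude of order $(\eta R_*)^{-1}\bigl(\eta+R_*^{-\delta_0}\bigr)$ on the shell $S_j$, whose weighted volume is of order $\eta R_*^{2n-2\nu}$, so the squared weighted $L^2$ norm of this term scales like $R_*^{2n-2-2\nu}\bigl(\eta+R_*^{-2\delta_0}\eta^{-1}+2R_*^{-\delta_0}\bigr)$. Sending $\eta\to0$ with $R_*$ fixed makes this \emph{blow up}, not shrink; the smallness you need is obtained by sending $R_*\to\infty$ with $\eta$ bounded below (say comparable to $R_*^{-\delta_0}$), which is the opposite of the quantifier order you wrote (``Fix a large constant $R_*$ and then a small constant $\eta$\dots Choosing $\eta$ small enough\dots''). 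Relatedly, the passage from smallness of the weighted $L^2$-norm of $w_j$ to the global pointwise bound $|w_j|<|\phi_j|$ on all of $Y_j$ is asserted rather than argued: it requires a Bergman/mean-value estimate for the holomorphic $w_j$ on balls whose size and weight vary across the noncompact manifold, uniformly in $j$. The argument can likely be repaired by fixing the quantifiers and carrying out that estimate, but as written the step fails; the paper's route avoids the issue altogether by reading off nowhere-vanishing from the structure of $R(X)$.
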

	\begin{remark}
    It follows from the proof that for $j\in \mathbb Z_{\geqslant 1}$, $\Omega_j$ coincides with $(\Lambda^{-j})^*\Omega$ up to a nonzero multiplicative constant depending on $j$. For our purpose we need the precise asymptotics of $\Omega_j$ and the stated properties of $h_j$ essentially determine this constant. Moreover from the proof, we also obtain 
    \begin{equation}\label{fix angle}
        \left|\nabla_{\omega_j}^k((p_j^{-1})^*\Omega-\Omega_j)\right|_{\omega_j}\leq C_k r_j^{-\delta_1-k} \text{ on $Y_j\setminus K$, for all $k\geqslant 0$}.
    \end{equation} Note that (1) and (2) in the proposition are invariant if we multiply $\Omega_j$ by a complex number with norm 1 and the estimate \eqref{fix angle} can be used to eliminate this freedom.
\end{remark}
\begin{proof}
Let  $\varphi  =\log(r_{\xi}^2+1)-\frac{1}{100}(\log(r_{\xi}^2+3))^{\frac{1}{2}}$ and denote $\varphi_j=\varphi|_{Y_j}$. Then by Lemma \ref{psh function on a kahler cone}-(1) and Corollary \ref{c:metric close}, we know that outside a compact set $K$ for every $\epsilon>0$, there is a positive constant $c_{\epsilon}$ such that 
$
    \ii\p\pp \varphi_j\geqslant c_{\epsilon}r_j^{-2-\epsilon}\omega_j.
$
Let $p_j$ be given in Proposition \ref{construction of projection maps}. For all $\delta\in (0, \delta_0-\epsilon_0)$ we have 
			\begin{equation} \label{e:almost constant}
		    \left|\left((p_j^{-1})^*\Omega\right)^{n,0}\right|_{\omega_j}-1=O(r_j^{-\delta}),
		\end{equation}
	and for all $k\geqslant 1$, 
	$
		\left|\nabla^k_{\omega_j}\left((p_j^{-1})^*\Omega\right)^{n,0}\right|_{\omega_j}\leqslant C_{\delta, k} r_j^{-\delta-k}
	$ since $\nabla_{\omega}\Omega=0$. 
	
Fix $T$ large so that $K\subset \{r_\xi\leqslant T\}$, and fix  a cut off function $\chi$ on $\C^N$ which equals 0 when $r_\xi\leqslant T$, equals 1 when $r_\xi>2T$. Recall that we use $Y_j^{\circ}=Y_j\setminus \text{Sing}(Y_j)$ to denote the smooth locus of $Y_j$. Since $Y_j$ is affine and $ \text{Sing}(Y_j)$ consists of finitely many points, we know $Y_j^\circ$ admits a complete K\"ahler metric. Then for any $\tau>0$ small, we can apply Theorem \ref{Hormander} on $M=Y_j^{\circ}$, with $\omega_M=\omega_j$ and $\varphi_M=(n-1+\tau)\varphi_j$, to obtain an $(n,0)$ form $v_j$  satisfying
	$\label{partial bar equation 2}
		\pp v_j=\pp \left(\chi\cdot  ((p_j^{-1})^*\Omega)^{n,0}\right), 
	$ such that
	$$\int_{Y_j^{\circ}}e^{-(n-1+\tau)\varphi_j}(\ii)^{n^2}v_j\wedge \ols{v}_j\leqslant C. $$
 By Proposition \ref{c:elliptic regularity}, when $r_\xi\geqslant 2T$, we have $|\nabla^k_{\omega_j}v_j|\leqslant C_k r_j^{-\delta_1-k}$ for some constant $\delta_1>0$ and all $k\geqslant0$. 
	
  Now let $\Omega_j=\chi ((p_j^{-1})^*\Omega)^{n,0}-v_j$. It is holomorphic on $Y_j^{\circ}$ and is non-zero by \eqref{e:almost constant} and the decay of $v_j$. Notice that the function $\frac{(\Lambda^{j})^*\Omega_j}{\Omega}$ is a  holomorphic function on $X$ which is $O(r^{\epsilon})$ for all $\epsilon>0$, so must be a nonzero constant. In particular, $\Omega_j$ is a nowhere vanishing holomorphic volume form on $Y_j$. The desired estimates follow from straightforward computation.
\end{proof}

In the following proposition, we omit the notation $(\pi_j^{-1})^*$ and view $\widetilde{\omega}_{j,\epsilon}$ as a K\"ahler form on $Y_j^\circ \subset \mathbb C^N$. We also view $r_j$ (which is a priori only defined on the end of $Y_j$ using $r\circ p_j^{-1}$) as a positive smooth function on $Y_j$ which is a constant on $K_0\cap Y_j$ where $K_0\subset \mathbb C^N$ is a fixed compact set independent of $j$, containing $\mathfrak S=\cup_{1\leqslant j\leqslant \infty}\text{Sing}(Y_j)$.
\begin{proposition}\label{preconditioning}
There exist a $\delta_2>0$, a compact set $K\subset \C^N$ containing a neighborhood of $\mathfrak S$ and for all $j\in\mathbb Z_{\geqslant 1}$ and $\epsilon\in (0, \lambda^{4j})$ a smooth function  $u_{j,\epsilon}$ on $Y_j^\circ$ satisfying
\begin{itemize}
    \item[(1).] for all $k\geq0$, 
	$\left|\nabla^k_{\widetilde\omega_{j,\epsilon}}u_{j,\epsilon}\right|_{\widetilde\omega_{j,\epsilon}}\leqslant C_k r_j^{-\delta_2+2-k}$ on $Y_j\setminus K$,
    \item[(2).] let $e^{-f_{j,\epsilon}}=\frac{\left(\widetilde\omega_{j,\epsilon}+\ii\partial\pp u_{j,\epsilon}\right)^n}{(\ii)^{n^2}\Omega_j\wedge\ols{\Omega}_j}$, then on $Y_j\setminus K$ for all $k\geq0$,  $
	\left |\nabla^k_{\widetilde\omega_{j,\epsilon}}f_{j,\epsilon}\right|_{\widetilde\omega_{j,\epsilon}}\leqslant C_k r_j^{-\delta_2-2-k}$,
	 \item[(3).] $u_{j,\epsilon}$ is supported on $Y_j\setminus K$, 
    \item [(4).]
        $\omega_{j}+\sqrt{-1}\p\bp u_{j,\epsilon}\geqslant \frac{1}{2}\omega_j$ and for all $k\geqslant 0$, $\left|\nabla^k_{\omega_j}u_{j,\epsilon}\right|_{\omega_j}\leq C_k (r_j+1)^{-\delta_2+2-k}$ on $Y_j^\circ$.
	
\end{itemize}
\end{proposition}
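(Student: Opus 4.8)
The plan is to perturb the background K\"ahler form $\widetilde\omega_{j,\epsilon}=\omega_j+\epsilon\beta$ by a small exact $(1,1)$--term $\sqrt{-1}\partial\bar\partial u_{j,\epsilon}$, supported near infinity, so that the Monge--Amp\`ere error decays strictly faster than $r_j^{-2}$ at infinity --- a definite improvement over the naive error of $\widetilde\omega_{j,\epsilon}$ itself. \emph{Step 1 (the initial error).} Put $e^{-F^{(0)}_{j,\epsilon}}=\widetilde\omega_{j,\epsilon}^n/\bigl((\sqrt{-1})^{n^2}\Omega_j\wedge\ols{\Omega}_j\bigr)$ on $Y_j^\circ$. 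On the end one expands $(\omega_j+\epsilon\beta)^n$ and uses: (i) Lemma \ref{decay of beta} together with $\epsilon<\lambda^{4j}$, which bounds every $\epsilon\beta$--term by $Cr_j^{-1/2}$ in the $\omega_j$--norm; (ii) Corollary \ref{c:metric close}(2), comparing $\omega_j=\gamma_j$ with the Calabi--Yau form $\omega$ transported by $p_j$ up to $O(r_j^{-\delta})$ for $\delta<\delta_0-\epsilon_0$; (iii) \eqref{fix angle}, comparing $\Omega_j$ with $(p_j^{-1})^*\Omega$ up to $O(r_j^{-\delta_1})$; and (iv) the identity $\omega^n=(\sqrt{-1})^{n^2}\Omega\wedge\ols{\Omega}$ on $X$. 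This gives, with constants uniform in $(j,\epsilon)$, $|\nabla^k_{\widetilde\omega_{j,\epsilon}}F^{(0)}_{j,\epsilon}|_{\widetilde\omega_{j,\epsilon}}\leqslant C_k r_j^{-a-k}$ with $a:=\tfrac12\min\{1,\delta_0-\epsilon_0,\delta_1\}>0$. Moreover, Corollary \ref{c:metric close}, Lemma \ref{decay of beta} and Lemma \ref{l:asymptotic decay} together show that, transported by $p_j$ and the diffeomorphism of Lemma \ref{l:asymptotic decay}, $\widetilde\omega_{j,\epsilon}$ satisfies \eqref{new riemannian metric} with functions $e_l$ that do not depend on $(j,\epsilon)$, so Propositions \ref{p:Poisson equation general} and \ref{c:elliptic regularity} apply with uniform constants.

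\emph{Step 2 (iterated correction on the end).} This is a finite Newton-type iteration on the end $\mathcal Q_R$. Given that the current form $\widetilde\omega_{j,\epsilon}^{(i)}=\widetilde\omega_{j,\epsilon}+\sqrt{-1}\partial\bar\partial(v^{(0)}_{j,\epsilon}+\cdots+v^{(i-1)}_{j,\epsilon})$ (uniformly comparable to $\widetilde\omega_{j,\epsilon}$) has Monge--Amp\`ere error $F^{(i)}_{j,\epsilon}$ with $|\nabla^k F^{(i)}_{j,\epsilon}|=O(r_j^{-a_i-k})$, I would solve $\Delta_{\widetilde\omega_{j,\epsilon}^{(i)}}v^{(i)}_{j,\epsilon}=e^{F^{(i)}_{j,\epsilon}}-1$ via Proposition \ref{p:Poisson equation general} at weight $-a_i$ (after a harmless perturbation of $a_i$ to miss the finite exceptional set), obtaining $|\nabla^k v^{(i)}_{j,\epsilon}|=O(r_j^{2-a_i-k})$ and $|\nabla^k\sqrt{-1}\partial\bar\partial v^{(i)}_{j,\epsilon}|=O(r_j^{-a_i-k})$. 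Since $\det(I+B)=\exp(\operatorname{tr}B+Q(B))$ with $Q$ the quadratic-and-higher remainder and $\operatorname{tr}_{\widetilde\omega_{j,\epsilon}^{(i)}}\sqrt{-1}\partial\bar\partial v^{(i)}_{j,\epsilon}=e^{F^{(i)}_{j,\epsilon}}-1$, the new error is $F^{(i+1)}_{j,\epsilon}=-(e^{F^{(i)}_{j,\epsilon}}-1-F^{(i)}_{j,\epsilon})-Q(\,\cdot\,)=O(r_j^{-2a_i})$, i.e.\ the exponent at least doubles each step. After finitely many steps $a_m>2+\delta_2$ for a chosen $\delta_2\in(0,a)$ (also missing the exceptional set); set $u^\circ_{j,\epsilon}=\sum_{i<m}v^{(i)}_{j,\epsilon}$, so that $|\nabla^k u^\circ_{j,\epsilon}|=O(r_j^{2-a-k})\subseteq O(r_j^{2-\delta_2-k})$ (the first corrector dominates) and the Monge--Amp\`ere error of $\widetilde\omega_{j,\epsilon}+\sqrt{-1}\partial\bar\partial u^\circ_{j,\epsilon}$ is $O(r_j^{-2-\delta_2-k})$, all uniform in $(j,\epsilon)$. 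The last one or two solves use a weight slightly above $1$; this is a mild extension of Lemma \ref{p:weighted-esimate-quotient-space} to weights in $(0,2)\setminus\Gamma$ with $\Gamma$ finite, or one may instead produce those late correctors from the $L^2$--estimate of Theorem \ref{Hormander} and upgrade to pointwise decay by Proposition \ref{c:elliptic regularity}.

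\emph{Step 3 (globalization).} Fix a compact $K\subset\C^N$, independent of $(j,\epsilon)$, containing a neighborhood of $\mathfrak S$ and a large ball, and set $u_{j,\epsilon}:=\chi\,u^\circ_{j,\epsilon}$ with $\chi$ a cutoff equal to $0$ on $K$ and to $1$ outside a slightly larger compact set, the transition taking place on the annulus $\{R_0\leqslant r_\xi\leqslant 2R_0\}$ with $R_0$ large, so that $|\nabla^\ell\chi|_{\omega_j}=O(R_0^{-\ell})$ there. Then $u_{j,\epsilon}$ vanishes near $\mathfrak S$, which gives (3); estimate (1) follows from Step 2 after replacing $\widetilde\omega_{j,\epsilon}$ by the uniformly comparable $\omega_j$ (Corollary \ref{c:metric close}(1)); on the transition annulus $\sqrt{-1}\partial\bar\partial(\chi u^\circ_{j,\epsilon})$ has $\omega_j$--norm $O(R_0^{-a})$ (from $|u^\circ_{j,\epsilon}|=O(R_0^{2-a})$, $|\nabla u^\circ_{j,\epsilon}|_{\omega_j}=O(R_0^{1-a})$, $|\nabla^2\chi|_{\omega_j}=O(R_0^{-2})$) and off it it is $O(r_j^{-a})$, so together with $u_{j,\epsilon}\equiv 0$ on $K$ we get $\omega_j+\sqrt{-1}\partial\bar\partial u_{j,\epsilon}\geqslant\tfrac12\omega_j$ for $R_0$ large, which is (4); and (2) holds on $Y_j\setminus K$ once $K$ is enlarged to contain the transition annulus, since beyond it $u_{j,\epsilon}=u^\circ_{j,\epsilon}$ and Step 2 applies directly.

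\emph{Main obstacle.} The heart of the matter is uniformity: every constant above must be controlled as $j\to\infty$ and as $\epsilon\to\lambda^{4j}$, which is why Step 1 insists on the uniform-in-$(j,\epsilon)$ conical asymptotics of $\widetilde\omega_{j,\epsilon}$, and why the weighted estimates of Section \ref{S3} were set up uniformly in $R$. The second difficulty is that the $\epsilon\beta$--contribution to the initial error decays only like $r_j^{-1/2}$, so no single linear correction reaches $O(r_j^{-2-\delta_2})$: the doubling iteration is essential, and one must verify that the accumulated corrector still grows strictly more slowly than $r_j^{2-\delta_2}$, which forces $\delta_2<a$. The residual technical points --- weighted solves at weights just above $1$, and placing the cutoff region inside the compact set appearing in (2) --- are routine.
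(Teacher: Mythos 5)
Your proposal is correct and follows essentially the same route as the paper: the paper reduces to the uniform weighted Poisson solves of Propositions~\ref{p:Poisson equation general} and~\ref{c:elliptic regularity} and then cites \cite[Section 2.4]{CH1} for precisely the Newton-type preconditioning iteration that you spell out in Step~2, followed by a radial cutoff exactly as in your Step~3. One small slip: in Step~3 you propose to ``enlarge $K$ to contain the transition annulus'' so that (2) holds there, but that would destroy (3), since with the enlarged $K$ the function $u_{j,\epsilon}=\chi\,u^\circ_{j,\epsilon}$ no longer vanishes on all of $K$; the correct resolution (and what the paper's ``choose $R_3$ sufficiently large'' accomplishes) is to keep $K\subset\{r_j\leqslant R_3\}$ so that (3) holds, and then note that on the fixed annulus $\{R_3\leqslant r_j\leqslant 2R_3\}$ both the Monge--Amp\`ere error and $r_j$ are uniformly bounded independently of $(j,\epsilon)$, so that (1) and (2) hold there with an appropriately adjusted constant. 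You are also right to flag that the iteration needs the weighted inverse at rates beyond the interval $(0,1)$ stated in Lemma~\ref{p:weighted-esimate-quotient-space}; this extension is implicit in the appeal to \cite{CH1} and is standard since the exceptional set of indicial roots is discrete.
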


\begin{proof}
We first identify the end of $X'$ differentiably with $\mathcal Q_{R_0}$ via Lemma \ref{l:asymptotic decay}. Then using the maps $p_j$ constructed in Proposition \ref{construction of projection maps} we may identify the end of all $Y_j$ with $\mathcal Q_{R_1}$ for some $R_1>R_0$. In particular we may view the metrics $\widetilde g_{j,\epsilon}$ (associated to $\widetilde\omega_{j,\epsilon}$) on $Y_j^\circ$ as a family of Riemannian metrics on $\mathcal Q_{R_1}$. By Lemma \ref{l:asymptotic decay}, Corollary \ref{c:metric close} and Lemma \ref{decay of beta} we know that  for all $j$,  $$s^l\sup_{\p\mathcal Q_s}|\nabla^l_{g_\mathcal C}(\widetilde g_{j,\epsilon}-g_\mathcal C)|_{g_\mathcal C}\leqslant e_l(s)$$ for some $e_l(s)$ with $\lim_{s\rightarrow\infty} e_l(s)=0$. Then we can apply Proposition \ref{p:Poisson equation general}  to solve the Poisson equation with uniform estimates for the metric $\widetilde g_{j, \epsilon}$ on $\mathcal Q_{R_2}$, where $R_2\geqslant R_1$ is given in Proposition \ref{p:Poisson equation general}. More precisely, we solve the equation in a fixed H\"older space for $k$ large and then apply elliptic regularity as Proposition \ref{c:elliptic regularity} to gain the decay for all derivatives. Next we can argue as in \cite[Section 2.4]{CH1} to obtain functions $u_{j,\epsilon}$ defined only on the end of $Y_j$ satisfying item (1) and (2) in the proposition. Fix a smooth function $H(t):[\,0,\infty)\rightarrow [\,0,1\,]$, which equals 0 for $t\leqslant 1$ and equals 1 for $t\geqslant 2$. Combining the estimate in (1) with Lemma \ref{construction of exact background kahler forms} and Lemma \ref{decay of beta}, it is direct to show that we can choose $R_3$ sufficiently large independent of $j$ and $\epsilon$, such that  $H(\tfrac{r_j}{R_3})u_{j,\epsilon}$ is a global defined function on $Y_j$ satisfying all (1)-(4) in the proposition.
\end{proof}

For all $j\in \mathbb Z_{\geqslant 1}$ and $\epsilon\in (0,\lambda^{4j})$, we define K\"ahler forms on $\hat Y_j$ $$\omega_{j, \epsilon}\equiv \widetilde{\omega}_{j,\epsilon}+\ii\partial\pp \pi_j^*u_{j, \epsilon},$$ and we denote by $g_{j, \epsilon}$ and $J_{j, \epsilon}$  the corresponding Riemannian metric and  complex structure. It follows from Lemma \ref{l:asymptotic decay}, Proposition \ref{construction of projection maps}, Corollary \ref{c:metric close}, Lemma \ref{decay of beta} and Proposition \ref{preconditioning} that $(g_{j, \epsilon},J_{j, \epsilon})$ are still uniformly asymptotic the cone $\mathcal{C}$. More precisely, we have
\begin{proposition}\label{close to asymptotic cone C}
There exist a compact set $K\subset \mathbb C^N$ and a large positive number $R_2$ such that  for all $j\in \mathbb Z_{\geqslant 1}$, the diffeomorphism $P_j=p_j\circ \Phi:\mathcal Q_{R_2}\rightarrow Y_j\setminus K$ satisfies that for all $l\geqslant 0$,
\begin{equation*}
\lim_{s\rightarrow\infty}\sup_{j\geqslant 1}\sup_{\p \mathcal Q_s}s^l\left (|\nabla^l_{g_\mathcal C}(P_j^*g_{j, \epsilon}-g_{\mathcal C})|_{g_\mathcal C}+|\nabla^l_{g_\mathcal C}(P_j^*J_{j, \epsilon}-J_{\mathcal C})|_{g_\mathcal C}\right)=0,
\end{equation*} where we omit $(\pi_j^{-1})^*$ again and view $g_{j, \epsilon}$ and $J_{j, \epsilon}$ as tensors on $Y_j$.
\end{proposition}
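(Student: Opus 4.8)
The plan is to assemble the estimate for $P_j^*g_{j,\epsilon}-g_{\mathcal C}$ and $P_j^*J_{j,\epsilon}-J_{\mathcal C}$ out of the ingredients already in place, using the triangle inequality in three pieces corresponding to the three modifications made to $\omega_\xi$ on the end of $Y_j$. Recall $\omega_{j,\epsilon}=\epsilon\beta+\gamma_j+\ii\partial\pp\pi_j^*u_{j,\epsilon}$ on the end, where $\gamma_j=\ii\partial\pp\psi_j'$ and $\psi_j'=\psi\circ p_j^{-1}$ there. So on $\mathcal Q_{R_2}$ we may write
\begin{equation*}
P_j^*g_{j,\epsilon}-g_{\mathcal C}=\big(P_j^*g_{j,\epsilon}-\Phi^*g\big)+\big(\Phi^*g-g_{\mathcal C}\big),
\end{equation*}
and control the second term by Lemma \ref{l:asymptotic decay} and the first term by comparing $\omega_{j,\epsilon}$ with $\omega$ through the diffeomorphism $p_j$.

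First I would record the metric comparison on the end. By Corollary \ref{c:metric close}-(2) we have $|\nabla_\omega^k(p_j^*\gamma_j-\omega)|_\omega\leqslant C_{\delta,k}r_j^{-\delta-k}$ for $\delta<\delta_0-\epsilon_0$; by Lemma \ref{decay of beta} together with $\epsilon\in(0,\lambda^{4j})$ we have $|\nabla^k_{\omega_j}(\pi_j^{-1})^*(\epsilon\beta)|_{\omega_j}\leqslant \epsilon\lambda^{-4j}C_kr_j^{-1/2-k}\leqslant C_kr_j^{-1/2-k}$; and by Proposition \ref{preconditioning}-(1),(4) we have $|\nabla^k_{\widetilde\omega_{j,\epsilon}}u_{j,\epsilon}|_{\widetilde\omega_{j,\epsilon}}\leqslant C_kr_j^{-\delta_2+2-k}$, so $|\nabla^k_{\omega_j}\ii\partial\pp u_{j,\epsilon}|$ decays at order $r_j^{-\delta_2-k}$. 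Since all the reference metrics $\omega$, $\omega_\xi$, $\gamma_j$, $\omega_j$, $\widetilde\omega_{j,\epsilon}$ are uniformly equivalent on the end up to $r_j^{\pm\delta}$ for any $\delta>0$ (Proposition \ref{rough estimate}, Corollary \ref{c:metric close}-(1)), and the covariant-derivative differences between them decay (Corollary \ref{estimates for different covariant derivatives}), I can convert all these bounds into bounds measured with a single metric, say $\omega$, using the tensor identity \eqref{covariant derivative with respect to different metrics}. The upshot is $|\nabla^k_\omega(p_j^*\omega_{j,\epsilon}-\omega)|_\omega\leqslant C_k r^{-\delta_3-k}$ for some uniform $\delta_3>0$ (take $\delta_3$ slightly below $\min(\delta_0-\epsilon_0,\tfrac12,\delta_2)$, absorbing the various small polynomial losses). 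The same computation with the holomorphic data gives $|\nabla^k_\omega(p_j^*J_{j,\epsilon}-J)|_\omega\leqslant C_kr^{-\delta_3-k}$, using the complex-structure estimate in Proposition \ref{construction of projection maps} and the fact that $\ii\partial\pp$ of a function with decaying Hessian does not change the complex structure.

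Next I would transport this from $\Phi^*g$ on $\mathcal Q_{R_0}$ to $g_{\mathcal C}$. By Lemma \ref{l:asymptotic decay}, $\Phi^*g\to g_{\mathcal C}$ and $\Phi^*J\to J_{\mathcal C}$ in the sense that $s^l\sup_{\p\mathcal Q_s}(|\nabla^l_{g_{\mathcal C}}(\Phi^*g-g_{\mathcal C})|+|\nabla^l_{g_{\mathcal C}}(\Phi^*J-J_{\mathcal C})|)\to0$. Composing with $p_j$, and noting $P_j=p_j\circ\Phi$, I get on $\mathcal Q_{R_2}$
\begin{equation*}
P_j^*g_{j,\epsilon}-g_{\mathcal C}=\Phi^*\big(p_j^*\omega_{j,\epsilon}\text{-metric}-g\big)+\big(\Phi^*g-g_{\mathcal C}\big),
\end{equation*}
where the first summand, pulled back by $\Phi$, is bounded by $C_k r^{-\delta_3-k}$ measured in $\Phi^*g$; since $\Phi^*g$ is uniformly equivalent to $g_{\mathcal C}$ with $o(r^{-1})$ connection difference (Lemma \ref{l:asymptotic decay}), the same quantity is $\leqslant C_k r^{-\delta_3'-k}$ in $g_{\mathcal C}$ for any $\delta_3'<\delta_3$, hence certainly $s^l\cdot(\text{this})\to0$ uniformly in $j$ and $\epsilon$. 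Adding the second summand, which tends to zero by Lemma \ref{l:asymptotic decay}, gives the claimed uniform limit; the same argument handles the $J$-term.

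The main obstacle is purely bookkeeping: keeping the estimates \emph{uniform in $j$ and $\epsilon$}. Every constant that enters — in Proposition \ref{rough estimate}, Corollary \ref{c:metric close}, Lemma \ref{decay of beta}, Proposition \ref{preconditioning}, Proposition \ref{construction of projection maps} — has already been arranged to be independent of $j$ (and the decay from $\beta$ carries the saving factor $\lambda^{4j}$ that exactly cancels $\epsilon<\lambda^{4j}$), so the uniformity is available; but one must check that converting between the various ambient metrics via \eqref{covariant derivative with respect to different metrics} does not secretly introduce $j$-dependence, which it does not because Corollary \ref{estimates for different covariant derivatives} and Corollary \ref{c:metric close}-(1) are themselves uniform. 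There is no analytic difficulty here — all the hard work was done in constructing $u_{j,\epsilon}$ and $\omega_{j,\epsilon}$ — so the proof is a short assembly, which is why the paper states it as a corollary-style proposition following directly from the cited results.
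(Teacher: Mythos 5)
Your proposal is correct and takes essentially the same approach as the paper, which states this proposition as a direct consequence of Lemma \ref{l:asymptotic decay}, Proposition \ref{construction of projection maps}, Corollary \ref{c:metric close}, Lemma \ref{decay of beta} and Proposition \ref{preconditioning} without spelling out details. Your decomposition of $\omega_{j,\epsilon}$ on the end into $\gamma_j+\epsilon\beta+\ii\partial\pp u_{j,\epsilon}$, the cancellation of $\lambda^{-4j}$ by $\epsilon<\lambda^{4j}$, the conversion to a common reference metric via \eqref{covariant derivative with respect to different metrics}, and the final triangle-inequality step through $\Phi^*g$ are exactly the intended assembly.
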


\section{Uniform estimates for the complex Monge-Amp\`ere equation}\label{s5}
From now on for $j\in \mathbb Z_{\geqslant 1}$  we will abuse notations and identify $\omega_j$, $\Omega_j$ and $r_j$ with their pull-backs under $\pi_j:\hat Y_j\rightarrow Y_j$. We now look for Calabi-Yau metrics on $\hat Y_j$ which are asymptotic to $\omega_{j, \epsilon}$. To do this  we need to solve on $\hat Y_j$ the equation
\begin{equation}\label{complex monge-ampere equation}
\begin{aligned}
	\left(\omega_{j, \epsilon} +\ii\partial\pp \varphi_{j, \epsilon}\right)^n=(\ii)^{n^2}\Omega_j\wedge\ols{\Omega}_j.
\end{aligned}
	\end{equation}
	 Let $\mu_j$ denote the volume form $(\ii)^{n^2}\Omega_j\wedge\ols{\Omega}_j$ which is also equal to $e^{f_{j,\epsilon}}\omega_{j,\epsilon}^n$. It is proved in Proposition \ref{preconditioning} that
	there exist a $\delta_2>0$ and a compact set $K\subset \mathbb C^N$ such that on $\hat Y_j\setminus \pi_j^{-1}(K\cap Y_j)$ for all $k\geqslant0$,
		\begin{equation}\label{decaying of Ricci potential}
			\left|\nabla^k_{\omega_{j, \epsilon}}f_{j, \epsilon}\right|_{\omega_{j, \epsilon}}\leqslant C_k r_{j}^{-\delta_2-2-k}.
		\end{equation} 
 The following theorem can be derived from Tian-Yau \cite[Section 4]{TY2}. See also Hein \cite[Section 4]{hein1}.
 
 \begin{theorem}[{\cite{TY2,hein1}}]\label{ existence result of Tian-Yau}
 	For fixed $j$ and $\epsilon$, (\ref{complex monge-ampere equation}) admits a smooth solution $\varphi_{j, \epsilon}$ on $\hat Y_j$ such that 
 	\begin{itemize}
 		\item [(1)] there exists a constant $p_0$ depending only on $\delta_2$ such that $\Vert\varphi_{j, \epsilon}\Vert_{L^{p_0}(\mu_j)}<\infty$ and $\varphi_{j, \epsilon}(x)\rightarrow 0$ as $ r_j(x)\rightarrow \infty$.
 		\item [(2)] $\left\Vert\nabla^k_{\omega_{j, \epsilon}}\varphi_{j, \epsilon}\right\Vert_{L^{\infty}(\omega_{j,\epsilon})} <\infty$ for all $k\geqslant0$.
 	\end{itemize}
 \end{theorem}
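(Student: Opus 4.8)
The plan is to verify that $(\hat Y_j,\omega_{j,\epsilon},\Omega_j)$ satisfies the hypotheses of the Tian--Yau solution of the Calabi problem on complete non-compact K\"ahler manifolds, and then to recall the structure of that proof. The needed input is already in place: by Proposition \ref{close to asymptotic cone C} the metric $\omega_{j,\epsilon}$ is complete and, outside a compact set, uniformly close to the Calabi--Yau cone $\mathcal C$ (in the rate-free sense $o(s^{-l})$), hence it has bounded geometry, quadratic curvature decay and Euclidean volume growth, and in particular supports a Euclidean-type Sobolev inequality. Moreover the Ricci potential $f_{j,\epsilon}$, defined by $e^{f_{j,\epsilon}}\omega_{j,\epsilon}^n=\mu_j$, decays polynomially by \eqref{decaying of Ricci potential}; together with the volume growth this gives $f_{j,\epsilon}\in L^{q}(\omega_{j,\epsilon}^n)$ for all $q>2n/(\delta_2+2)$, and it is this integrability threshold that fixes the exponent $p_0=p_0(n,\delta_2)$ in conclusion (1).

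I would then solve $(\omega_{j,\epsilon}+\sqrt{-1}\,\p\bp\varphi_t)^n=e^{tf_{j,\epsilon}}\omega_{j,\epsilon}^n$ for $t\in[0,1]$ by the continuity method, working in weighted H\"older spaces adapted to the conical end as in Section \ref{S3}, with a weight matched to the decay rate $\delta_2$ of $f_{j,\epsilon}$ and avoiding the indicial set of the cone Laplacian. Openness is the implicit function theorem: the linearization is $\Delta_{\omega_t}$, whose required isomorphism property on these weighted spaces is proved in the same way as Proposition \ref{p:Poisson equation general} (one now needs the regime of decaying solutions, but the argument is identical, cf.\ \cite[Section 6]{SZ}). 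Closedness follows from a priori estimates uniform in $t$: a $C^0$-estimate by Moser iteration against the Sobolev inequality, using the $L^{p_0}$-control of $f_{j,\epsilon}$ — run in weighted form as in \cite[Section 4]{TY2} this also yields the uniform bound $\|\varphi_t\|_{L^{p_0}(\mu_j)}\leqslant C$; Yau's Laplacian ($C^2$) estimate, using the bounded bisectional curvature of $\omega_{j,\epsilon}$, which gives $C^{-1}\omega_{j,\epsilon}\leqslant\omega_t\leqslant C\omega_{j,\epsilon}$; and then a complex Evans--Krylov interior estimate together with Schauder estimates, applied on balls of controlled geometry, for the higher-order bounds — this last is conclusion (2). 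One may equally replace the continuity method by Tian--Yau's original exhaustion of $\hat Y_j$ by bounded domains with the Dirichlet problem, which carries the same estimates.

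For the remaining assertion $\varphi_{j,\epsilon}(x)\to0$ as $r_j(x)\to\infty$, I would use that, once the Laplacian estimate is in force, $\varphi_{j,\epsilon}$ solves a uniformly elliptic second-order equation $\widetilde\Delta\,\varphi_{j,\epsilon}=f_{j,\epsilon}$ with coefficients of bounded geometry and polynomially decaying right-hand side; the local $L^{p_0}$-to-$L^\infty$ elliptic estimate on the unit ball around a point $x$ then gives $|\varphi_{j,\epsilon}(x)|\leqslant C\bigl(\|\varphi_{j,\epsilon}\|_{L^{p_0}(B(x,1))}+\|f_{j,\epsilon}\|_{L^\infty(B(x,1))}\bigr)$, and both terms on the right tend to $0$ as $x\to\infty$ (the first because $\varphi_{j,\epsilon}\in L^{p_0}(\mu_j)$ globally, the second by \eqref{decaying of Ricci potential}).

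The point that is not completely routine is that $\omega_{j,\epsilon}$ is asymptotic to $\mathcal C$ only in the rate-free sense, so the usual asymptotically-conical weighted theory does not apply off the shelf; but this is exactly what the robust weighted estimates of Section \ref{S3} (Propositions \ref{p:weighted-esimate-quotient-space}, \ref{p:Poisson equation general} and \ref{c:elliptic regularity}) were set up to accommodate, and the definite decay rate $\delta_2+2$ of the source term — arranged by the preconditioning of Proposition \ref{preconditioning} — is all that propagates into the analysis. Since no uniformity in $j$ or $\epsilon$ is asserted here, for each fixed $(j,\epsilon)$ the whole argument stays within the classical Tian--Yau framework.
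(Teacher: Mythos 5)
Your proposal is correct and reconstructs exactly the Tian--Yau/Hein argument that the paper invokes by citation (the paper gives no proof beyond the references to \cite{TY2,hein1} and the remark that the $L^\infty$ bound comes from Moser iteration). You correctly identify all the ingredients the paper's setup supplies --- completeness, Euclidean volume growth and Sobolev inequality from Proposition~\ref{close to asymptotic cone C}, the polynomial decay of $f_{j,\epsilon}$ from \eqref{decaying of Ricci potential} and hence the $L^q$ integrability threshold fixing $p_0$ --- and your continuity-method/exhaustion outline with Moser iteration, Yau's $C^2$ estimate, and Evans--Krylov--Schauder for higher regularity, plus the local elliptic estimate for the decay to zero, is the standard route; the weighted-space variant you mention is also the alternative the paper itself points to in the discussion surrounding Lemma~\ref{non uniform estimate}.
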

 \begin{remark}
 	The proof of the $L^{\infty}$ bound of $\varphi_{j, \epsilon}$ is via the Moser iteration, thus indeed we know that $\left \Vert\varphi_{j, \epsilon}\right\Vert_{L^p(\mu_j)}<\infty$ for all $p\gg1$. In the following, we fix a large  $p_0$ such that $q_0=\frac{n(p_0+1)}{n+p_0}$ satisfies 
 	\begin{equation}\label{assumption on index q_0}
 		(2+\delta_2)q_0>2n.
 	\end{equation}
 \end{remark}
 
 Since the background metrics $\omega_{j,\epsilon}$ are asymptotic to  the cone metric on $\mathcal{C}$, by the method in \cite[Section 5]{TY2} and using ${r_j}^{-2\delta}$ as a barrier to apply the maximum principle, we can prove the following decay estimates for  $\varphi_{j,\epsilon}$. We remark that   alternatively, one can also use the method of \cite{CH1, CGT} to solve the complex Monge-Amp\`ere equation in a suitable weighted space (since we have Lemma \ref{p:weighted-esimate-quotient-space}),  then the decay property will be a direct by-product.
 
  \begin{lemma}\label{non uniform estimate}
  	There exist a constant $\delta_3>0$ depending only on $\delta_2$, compact sets $K_{j,\epsilon}\subset \mathbb C^N$ depending only on $\left\Vert\varphi_{j,\epsilon}\right\Vert_{L^{\infty}}$  and  constants $Q_{k, j, \epsilon}$  depending only on $k$ and  $\left\Vert\varphi_{j,\epsilon}\right\Vert_{L^{\infty}}$ such that  on $Y_j\setminus K_{j,\epsilon}$ for all $k\geqslant0$, 
  	\begin{equation*}
  		 \left|\nabla^k_{\omega_{j, \epsilon}}\varphi_{j, \epsilon}\right|_{\omega_{j,\epsilon}}\leqslant Q_{k, j, \epsilon} { r_j}^{-\delta_3-k}.  
  	\end{equation*}  
  	 \end{lemma}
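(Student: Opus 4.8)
The plan is to combine the uniform decay of the Ricci potential $f_{j,\epsilon}$ from Proposition \ref{preconditioning} with a barrier argument in the spirit of Tian--Yau \cite[Section 5]{TY2}. First I would recall that, by Theorem \ref{ existence result of Tian-Yau}, for fixed $j,\epsilon$ the solution $\varphi_{j,\epsilon}$ is globally bounded on $\hat Y_j$ and tends to $0$ at infinity, and that the background metric $\omega_{j,\epsilon}$ is asymptotic to the cone metric $g_{\mathcal C}$ (Proposition \ref{close to asymptotic cone C}). Rewriting the Monge--Amp\`ere equation \eqref{complex monge-ampere equation} as $(\omega_{j,\epsilon}+\sqrt{-1}\p\bp\varphi_{j,\epsilon})^n=e^{-f_{j,\epsilon}}\omega_{j,\epsilon}^n$ and using the concavity of $\log\det$, one gets a differential inequality of the form $\Delta_{\omega_{j,\epsilon}}\varphi_{j,\epsilon}\geqslant -Cr_j^{-2-\delta_2}$ on the end (and a matching upper-barrier inequality after interchanging the roles of the two metrics). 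Since on $\mathcal C$ one has $\Delta_{g_\mathcal C}r^{-2\delta}\sim -c\,\delta\,r^{-2-2\delta}$ for small $\delta>0$ away from the vertex (the exact constant coming from the spectrum of the cross-section Laplacian, which forces $2\delta<\delta_2$ and a lower eigenvalue bound to ensure the sign), the function $r_j^{-2\delta}$ is a supersolution of $\Delta_{\omega_{j,\epsilon}}$ up to the small perturbation controlled by Lemma \ref{l:asymptotic decay}; fixing $\delta_3=2\delta$ with $0<\delta_3<\delta_2$ gives the required comparison function.

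Next I would run the maximum principle on the region $Y_j\setminus K_{j,\epsilon}$ where $K_{j,\epsilon}=\{r_j\leqslant R_{j,\epsilon}\}$ is chosen large enough (depending on $\Vert\varphi_{j,\epsilon}\Vert_{L^\infty}$) that the barrier dominates $\varphi_{j,\epsilon}$ on the inner boundary $\{r_j=R_{j,\epsilon}\}$: one compares $\varphi_{j,\epsilon}$ with $\pm A\,r_j^{-\delta_3}$ for a suitable constant $A=A(\Vert\varphi_{j,\epsilon}\Vert_{L^\infty})$, using that $\varphi_{j,\epsilon}\to0$ at infinity to apply the maximum principle on the non-compact end (or equivalently apply it on an exhaustion and let the exhausting radius $\to\infty$). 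This yields the $C^0$-estimate $|\varphi_{j,\epsilon}|\leqslant Q_{0,j,\epsilon}r_j^{-\delta_3}$ on $Y_j\setminus K_{j,\epsilon}$.

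To upgrade to all higher derivatives I would use a rescaled Schauder / elliptic bootstrap argument: around a point $x$ with $r_j(x)=s\gg1$, rescale the metric $\omega_{j,\epsilon}$ by $s^{-2}$ so that on a unit ball the rescaled metric is uniformly (independently of $j,\epsilon,s$) close to a fixed reference — thanks to Proposition \ref{close to asymptotic cone C} — and the complex Monge--Amp\`ere equation becomes a uniformly elliptic equation with $C^0$-data of size $O(s^{-\delta_3})$ for $\varphi_{j,\epsilon}$ and $O(s^{-2-\delta_2})$ for $f_{j,\epsilon}$. Interior Schauder estimates for the linearized (and then iterated) equation then give $|\nabla^k_{\omega_{j,\epsilon}}\varphi_{j,\epsilon}|\leqslant C_k s^{-\delta_3-k}$ after scaling back; this is exactly the mechanism already invoked in the proof of Proposition \ref{preconditioning} via Proposition \ref{c:elliptic regularity}, so I would reduce to that. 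The constants $Q_{k,j,\epsilon}$ genuinely depend on $\Vert\varphi_{j,\epsilon}\Vert_{L^\infty}$ only through the size $R_{j,\epsilon}$ of the compact set (where the barrier must beat $\varphi_{j,\epsilon}$) and through the $C^0$-constant $A$; the ellipticity constants and Schauder constants are uniform.

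The main obstacle, as in all such arguments, is \emph{uniformity of the barrier step}: one must verify that the sign and size of $\Delta_{\omega_{j,\epsilon}}r_j^{-\delta_3}$ are controlled \emph{uniformly in $j$ and $\epsilon$} on the end, which requires the uniform asymptotics of $\omega_{j,\epsilon}$ to $g_\mathcal C$ (Proposition \ref{close to asymptotic cone C}) together with a uniform lower bound on the relevant eigenvalue gap of the link $L$ of $\mathcal C$ — the latter is a fixed datum of $\mathcal C$, so this is fine, but it is the point where the choice $\delta_3=\delta_3(\delta_2)<\delta_2$ is pinned down. A secondary subtlety is that the compact sets $K_{j,\epsilon}$ and constants $Q_{k,j,\epsilon}$ are \emph{not} yet uniform in $j,\epsilon$ at this stage (they depend on $\Vert\varphi_{j,\epsilon}\Vert_{L^\infty}$); making them uniform is deferred to later sections and is not part of this lemma.
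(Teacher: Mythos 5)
Your overall strategy — a barrier of the form $r_j^{-2\delta}$ with $\delta_3 = 2\delta < \delta_2$ small, a maximum principle on $\{r_j \geqslant R_{j,\epsilon}\}$ with $R_{j,\epsilon}$ chosen to beat $\|\varphi_{j,\epsilon}\|_{L^\infty}$, and a rescaled elliptic bootstrap for the higher derivatives — is exactly the plan carried out in the paper, and your analysis of where the constants come from (the eigenvalue gap of the link, the uniform asymptotics from Proposition \ref{close to asymptotic cone C}) is accurate. However, there is a genuine gap in your middle step: you propose to \emph{linearize} the equation via concavity of $\log\det$, obtaining $\Delta_{\omega_{j,\epsilon}}\varphi_{j,\epsilon}\geqslant -Cr_j^{-2-\delta_2}$, with ``a matching upper-barrier inequality after interchanging the roles of the two metrics.'' The lower barrier is fine, but the interchange gives $\Delta_{\hat\omega_{j,\epsilon}}\varphi_{j,\epsilon}\leqslant Cr_j^{-2-\delta_2}$, where the Laplacian is now with respect to the \emph{unknown} metric $\hat\omega_{j,\epsilon}$. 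To use $r_j^{-2\delta}$ as an upper barrier you would then need $\Delta_{\hat\omega_{j,\epsilon}}r_j^{-2\delta}$ to have a definite negative sign of the right magnitude — and this does \emph{not} follow from the mere quasi-isometry $\hat\omega_{j,\epsilon}\asymp\omega_{j,\epsilon}$. The form $\sqrt{-1}\p\bp r_j^{-2\delta}$ is indefinite (positive in the radial direction, negative tangentially); its trace against $\omega_{j,\epsilon}^{-1}$ is negative because of precise cancellation, and a quasi-isometry with eigenvalue ratio bounded away from $1$ can flip the sign, especially when $n$ is small. So the ``interchange the roles'' step, as stated, is not justified.

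The paper sidesteps this entirely by running the comparison at the level of the \emph{nonlinear} Monge--Amp\`ere operator rather than a linearization: it proves (uniformly in $j,\epsilon$) that
\begin{equation*}
\left(\omega_{j,\epsilon}+R\,\sqrt{-1}\p\bp r_j^{-2\delta}\right)^n < e^{f_{j,\epsilon}}\omega_{j,\epsilon}^n
\quad\text{and}\quad
\left(\omega_{j,\epsilon}-R\,\sqrt{-1}\p\bp r_j^{-2\delta}\right)^n > e^{f_{j,\epsilon}}\omega_{j,\epsilon}^n
\end{equation*}
on $\{r_j\geqslant R\}$ for $R$ large, using the expansion of $(\omega+\eta)^n/\omega^n$ and the pointwise estimates $|\sqrt{-1}\p\bp r_j^{-2\delta}|_{\omega_{j,\epsilon}}\leqslant C_0 r_j^{-2(\delta+1)}$ and $\Delta_{\omega_{j,\epsilon}}r_j^{-2\delta}\leqslant -\delta r_j^{-2(\delta+1)}$. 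One then applies the comparison principle for the Monge--Amp\`ere operator directly against $\hat\omega_{j,\epsilon}$, with $R_{j,\epsilon}=\max\{R_*,\|\varphi_{j,\epsilon}\|_{L^\infty}^{1/(1-2\delta)}\}$ as you guessed; the operator $\Delta_{\hat\omega_{j,\epsilon}}$ never appears. To repair your proof you should replace the concavity/linearization step by this nonlinear comparison (or otherwise supply a controlled closeness, not just equivalence, of $\hat\omega_{j,\epsilon}$ to $\omega_{j,\epsilon}$ before invoking the $\Delta_{\hat\omega_{j,\epsilon}}$ barrier). The rest of your argument (in particular the $C^0\Rightarrow C^k$ bootstrap and the dependence of $K_{j,\epsilon}$, $Q_{k,j,\epsilon}$ on $\|\varphi_{j,\epsilon}\|_{L^\infty}$ only) is correct and matches the paper.
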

  
  \begin{proof}
  	We focus on the $C^0$ decay property since the higher order estimates follow from a standard rescaling argument and elliptic estimates (see \cite[Proposition 5.1]{TY2}).  Fix a positive constant $\delta< \min\left\{\frac{\delta_2}{2},\frac{1}{2}\right\}$ and then we can prove the lemma holds for $\delta_3=2\delta$.
  	\begin{claim}
  	 there is a large constant $R_*$ independent of $j$ and $\epsilon$ such that for every $R\geqslant R_*$, on the region $\{r_j\geqslant R\}$   we have
  \begin{equation*}
  	\left(\omega_{j, \epsilon}+R\ii\partial\pp r_{j}^{-2\delta}\right)^n< e^{f_{j, \epsilon}}\omega_{j, \epsilon}^n\text{ and } \left(\omega_{j, \epsilon}-R\ii\partial\pp r_{j}^{-2\delta}\right)^n> e^{f_{j, \epsilon}}\omega_{j, \epsilon}^n.
  \end{equation*}
  	\end{claim}
  The decay of $\varphi_{j,\epsilon}$ follows from this claim by applying the maximal principle to the region $\{r_j\geqslant R_{j,\epsilon}\}$ with
  	$
  	    	R_{j,\epsilon}= \max\{R_*, \left\Vert\varphi_{j,\epsilon}\right\Vert_{L^{\infty}}^{\frac{1}{1-2\delta}}\}.
 $ 
  Let us prove the \textbf{Claim}. We only prove the first inequality since the second inequality follows from the same argument.
  By \eqref{decaying of Ricci potential} and Proposition \ref{close to asymptotic cone C}, we know that  outside a compact set $K_0$, there exists a constant $C_0>0$ such that
  \begin{equation}\label{supersubharmonic}
  \begin{aligned}
  1-C_0r^{-2-\delta_2}\leqslant e^{f_{j,\epsilon}}&\leqslant 1+C_0r^{-2-\delta_2},\\
       \left|\ii\p\pp r_j^{-2\delta}\right|_{\omega_{j,\epsilon}}&\leqslant C_0r_j^{-2(\delta+1)},\\
       \Delta_{\omega_{j,\epsilon}}r_j^{-2\delta}&\leqslant -\delta r_j^{-2(\delta+1)}.
  \end{aligned}
  \end{equation} The last inequality here is due to the fact that on the cone $\mathcal C$, for the radial function $r$ we have $$\Delta_{\omega_{\mathcal C}}r^{-2\delta}=4\delta(-n+\delta+1)r^{-2(\delta+1)}.$$
  Choose  $R_*$ large such that $K_0\subseteq \{r_j\leqslant R_*\}$ and  by the second inequality in \eqref{supersubharmonic} and the choice of $\delta$ we may also assume that for all $R\geqslant R_*$,  on $\{r_j\geqslant R\}$ we have 
  \begin{equation}\label{Laplacian dominates}
    \frac{\left(\omega_{j, \epsilon}+R\ii\partial\pp r_{j}^{-2\delta}\right)^n}{\omega_{j,\epsilon}^n}\leqslant 1+R\Delta_{\omega_{j,\epsilon}}r_j^{-2\delta}+\frac{\delta}{2}r_j^{-2(\delta+1)}, 
  \end{equation}
and
\begin{equation}\label{choice of delta}
    \left(-R\delta+\frac{\delta}{2}\right)r_j^{-2(\delta+1)}\leqslant -2C_0r_j^{-2-\delta_2}.
\end{equation}
    	Then the claim follows from \eqref{supersubharmonic}, \eqref{Laplacian dominates} and \eqref{choice of delta}.
  \end{proof}
 
 Denote   $$\hat\omega_{j, \epsilon}=\omega_{j, \epsilon}+\ii\partial\pp \varphi_{j, \epsilon}.$$ 
Below we will derive uniform estimates on $\varphi_{j, \epsilon}$. Then we will take a double limit,  first as $\epsilon\rightarrow0$ and then as $j\rightarrow\infty$, to obtain a Calabi-Yau metric on $W\setminus\{0\}$. The following arguments are essentially standard, given the large body of literature in this field; see for example the recent work by Collins-Guo-Tong \cite{CGT} which deals with the case when the background metric is asymptotically conical in the stronger sense (with polynomial decay rate).  Notice that we do not need uniform estimates for all $j$ and $\epsilon\in (0, \lambda^{4j})$. Instead we first fix $j$ and derive uniform estimates as $\epsilon\rightarrow 0$, which allows us to take a limit $\varphi_j$. Then we will derive uniform estimates for $\varphi_j$ as $j\rightarrow\infty$. Notice we are free to pass to subsequences when taking limits and we do not need uniqueness of the limits. 
 
 Let $$A_{j, \epsilon}=\sup_{q\geqslant q_0} \Vert  e^{-f_{j, \epsilon}}-1\Vert_{L^q(\mu_j)}, \ \ \ \ A_{j}=\limsup_{\epsilon\rightarrow 0} A_{j,\epsilon}$$ By \eqref{decaying of Ricci potential} and \eqref{assumption on index q_0} we know for each $j,\epsilon$, $A_{j, \epsilon}<\infty$. The following lemma shows that $A_j< \infty$ and moreover we have a uniform bound on $A_j$. 

 \begin{lemma}\label{l:Aj uniform bound}
 There exists a constant $C>0$ such that $A_j\leqslant C$ for all $j$.
 \end{lemma}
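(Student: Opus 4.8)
The plan is to prove the stronger statement that $A_{j,\epsilon}\leqslant C$ for all $j\in\mathbb Z_{\geqslant1}$ and all $\epsilon\in(0,\lambda^{4j})$, which immediately gives $A_j=\limsup_{\epsilon\to0}A_{j,\epsilon}\leqslant C$. Since $\mu_j=e^{f_{j,\epsilon}}\omega_{j,\epsilon}^n$, we have $e^{-f_{j,\epsilon}}=\omega_{j,\epsilon}^n/\mu_j$. I would fix once and for all a large radius $R$, independent of $j$ and $\epsilon$, so that the compact set $K$ from \eqref{decaying of Ricci potential} satisfies $\pi_j^{-1}(K\cap Y_j)\subset\{r_j<R\}$ and the cut-off in Proposition \ref{preconditioning} makes $u_{j,\epsilon}$ vanish on $\{r_j<R\}$, and then estimate $\int_{\hat Y_j}|e^{-f_{j,\epsilon}}-1|^q\mu_j$ separately over the end $\{r_j\geqslant R\}$ and the core $\{r_j<R\}$, uniformly in $q\geqslant q_0$, $j$, $\epsilon$.

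On the end, \eqref{decaying of Ricci potential} with $k=0$ gives $|f_{j,\epsilon}|\leqslant Cr_j^{-\delta_2-2}$, hence $|e^{-f_{j,\epsilon}}-1|\leqslant Cr_j^{-\delta_2-2}\leqslant1$ after enlarging $R$, so $|e^{-f_{j,\epsilon}}-1|^q\leqslant|e^{-f_{j,\epsilon}}-1|^{q_0}$ there for all $q\geqslant q_0$. By Proposition \ref{close to asymptotic cone C} together with the asymptotics of $\Omega_j$ from Lemma \ref{construction of holomorphic volume form}, under the identification $P_j$ the function $r_j$ equals the cone radius $r$ and $\mu_j$ is uniformly comparable to $\omega_{\mathcal C}^n$, so
\[\int_{\{r_j\geqslant R\}}|e^{-f_{j,\epsilon}}-1|^{q_0}\mu_j\leqslant C\int_R^\infty\rho^{-(2+\delta_2)q_0+2n-1}\,d\rho,\]
which is finite and independent of $j,\epsilon$ exactly because $(2+\delta_2)q_0>2n$ by \eqref{assumption on index q_0}. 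Taking $q$-th roots gives a uniform bound for $\|e^{-f_{j,\epsilon}}-1\|_{L^q(\{r_j\geqslant R\},\mu_j)}$.

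On the core, $u_{j,\epsilon}\equiv0$, so $\omega_{j,\epsilon}=\epsilon\beta+\pi_j^*\omega_j\leqslant\beta+\pi_j^*\omega_j$ since $\epsilon<\lambda^{4j}<1$. What I need here is the uniform upper bound $e^{-f_{j,\epsilon}}=\omega_{j,\epsilon}^n/\mu_j\leqslant C$ on $\{r_j<R\}$, with $C$ independent of $j$ and $\epsilon$. This should follow from the explicit construction of the background data: away from the exceptional set of $\pi_j$, the forms $\omega_{j,\epsilon}$ and $\mu_j$ are $\pi_j$-pull-backs of $\epsilon(\pi_j^{-1})^*\beta+\omega_j$ and of $(\sqrt{-1})^{n^2}\Omega_j\wedge\overline\Omega_j$ on $Y_j$; by the Remark following Lemma \ref{construction of holomorphic volume form}, $\Omega_j$ is a constant $c_j$ times a $\Lambda$-power pull-back of the fixed form $\Omega$, with $c_j$ and the scaling pinned down by the requirement that $\Omega_j$ be asymptotic at infinity to the fixed cone volume form $\Omega_{\mathcal C}$ (Lemma \ref{construction of holomorphic volume form}-(1)), while by Lemma \ref{construction of exact background kahler forms} the form $\omega_j$ on the core is $\sqrt{-1}\partial\bar\partial$ of an explicit function built from $r_0^2$ and $r_\xi^2$ with uniformly controlled coefficients. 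Since all $Y_j$ are biholomorphic to $X'$ with the same isolated singularity, the $\Lambda$-scalings in the numerator and the denominator cancel in the ratio, yielding $\omega_j^n\leqslant C(\sqrt{-1})^{n^2}\Omega_j\wedge\overline\Omega_j$ on the core of $Y_j$ uniformly; near the exceptional set the added term $\epsilon\beta$ dominates, $\omega_{j,\epsilon}^n\leqslant C\epsilon^n\beta^n$, and this is still $\leqslant C\mu_j$ because $\epsilon<\lambda^{4j}$ is far below the scale of $\mu_j$. Granting $e^{-f_{j,\epsilon}}\leqslant C$ on the core, we get $|e^{-f_{j,\epsilon}}-1|\leqslant e^{-f_{j,\epsilon}}+1\leqslant C$ there, and with the uniform mass bound $\mu_j(\{r_j<R\})\leqslant C$ of Lemma \ref{construction of holomorphic volume form}-(2) this gives $\int_{\{r_j<R\}}|e^{-f_{j,\epsilon}}-1|^q\mu_j\leqslant C^q$, hence a uniform bound for $\|e^{-f_{j,\epsilon}}-1\|_{L^q(\{r_j<R\},\mu_j)}$ after taking $q$-th roots.

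Adding the end and core contributions and taking the supremum over $q\geqslant q_0$ yields $A_{j,\epsilon}\leqslant C$, hence $A_j\leqslant C$. I expect the main obstacle to be the core estimate $e^{-f_{j,\epsilon}}\leqslant C$: it is entirely a matter of carefully tracking how the ingredients of the background construction scale under the $\Lambda$-action and of checking that the normalization of $\Omega_j$ matches that of $\omega_j$, so that the two scalings cancel uniformly in $j$. Away from this bookkeeping, the argument is the standard combination of conical decay at infinity with the integrability hypothesis \eqref{assumption on index q_0}, as in \cite{CGT}.
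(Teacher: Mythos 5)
Your overall decomposition into end and core is the same as the paper's, and the end estimate — combining $|f_{j,\epsilon}|\leqslant Cr_j^{-\delta_2-2}$ from \eqref{decaying of Ricci potential} with the integrability hypothesis $(2+\delta_2)q_0>2n$ — is correct and matches the paper (which packages this as the bound $A_{j,\epsilon}\leqslant C(\Vert e^{-f_{j,\epsilon}}\Vert_{L^\infty(\pi_j^{-1}(K))}+1)$). However, the core estimate, which is the actual content of the lemma, contains a genuine gap, and you acknowledge as much when you write that the main obstacle is the uniform bound $e^{-f_{j,\epsilon}}\leqslant C$ on the core and that it is ``entirely a matter of carefully tracking how the ingredients of the background construction scale under the $\Lambda$-action.'' It is not. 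The issue is not a scaling/bookkeeping one: the ratio $\omega_0^n/\mu_j$ depends on the extrinsic geometry of $Y_j\subset\C^N$ near $\operatorname{Sing}(Y_j)$, and as $j\to\infty$ the singularity becomes ``deeper'' (approaching the singular cone $W$), so there is no local or formal reason that the $\Lambda$-scalings in numerator and denominator cancel uniformly. A local computation alone cannot rule out that $\omega_0^n/\mu_j$ blows up near the singular point uniformly in $j$.

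The paper resolves this with a global argument that you do not have. Since $\omega_0|_{Y_j}$ is the induced metric on a complex submanifold of flat $\C^N$, the Gauss--Codazzi equation implies $\operatorname{Ric}(\omega_0|_{Y_j})\leqslant 0$, and therefore $\omega_0^n/\bigl((\ii)^{n^2}\Omega_j\wedge\ols{\Omega}_j\bigr)$ is plurisubharmonic on $Y_j^\circ$ (indeed its logarithm is, and the exponential of a psh function is psh); this function extends smoothly across the exceptional set after pulling back to $\hat Y_j$. The maximum principle then bounds its supremum on $K\cap Y_j^\circ$ by its values near $\partial K$, where Lemma \ref{construction of holomorphic volume form} gives uniform control. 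Combined with the uniform bound $\omega_j+\ii\p\bp u_{j,\epsilon}\leqslant C_2\omega_0$ on the core from Lemma \ref{construction of exact background kahler forms} and Proposition \ref{preconditioning}, and taking $\limsup_{\epsilon\to0}$ so that the $\epsilon\beta$ term drops out, this gives the required uniform bound on $\limsup_{\epsilon\to0}\Vert e^{-f_{j,\epsilon}}\Vert_{L^\infty(\pi_j^{-1}(K))}$. Incidentally, you try to prove the stronger statement $A_{j,\epsilon}\leqslant C$ rather than $A_j\leqslant C$, which forces you to control the $\epsilon\beta$ term near the exceptional set; the paper sidesteps this entirely by taking $\epsilon\to 0$ first, which is all the lemma asks for. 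You should adopt that simplification, and in any case the plurisubharmonicity/maximum-principle step is the missing idea your proof needs.
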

 
\begin{proof}
 Let $K$ be the union of the compact sets obtained in Lemma \ref{construction of holomorphic volume form}-(1) and Proposition \ref{preconditioning}, then by (\ref{decaying of Ricci potential}), \eqref{assumption on index q_0} and Lemma \ref{construction of holomorphic volume form}, we obtain there exists a constant $C$ independent of $j$ and $\epsilon$ such that 
 \begin{equation}\label{decomposition of the bound}
     A_{j, \epsilon}\leqslant C\left(\left\Vert e^{-f_{j,\epsilon}}\right\Vert_{L^{\infty}(\pi_j^{-1}(K))}+1\right).
 \end{equation}
By the construction in Lemma \ref{construction of exact background kahler forms} and Proposition \ref{preconditioning}, $\omega_j+\ii \partial\pp u_{j,\epsilon}\leqslant C_1\omega_j\leqslant C_2 \omega_0$ on $Y_j^\circ\cap K$, where $\omega_0$ denotes the K\"ahler form for the Euclidean metric on $\mathbb C^N$, for some constants $C_1$ and $C_2$ independent of $j$ and $\epsilon$. Therefore by the definition of $f_{j,\epsilon}$ we obtain 
\begin{equation}\label{upper bound on compact set}
    \limsup_{\epsilon\rightarrow 0} \left\Vert e^{-f_{j,\epsilon}}\right\Vert_{L^{\infty}(\pi_j^{-1}(K))} \leqslant  C_2^n\left\Vert\frac{\omega_0^n}{(\ii)^{n^2}{\Omega_j}\wedge\ols{\Omega}_j}\right\Vert_{L^{\infty}(K\cap Y_j^\circ)}.
\end{equation}
By the Gauss-Codazzi equation we know the Ricci curvature of $\omega_0|_{Y_j}$ is non-positive, it follows that the function $ \frac{\omega_0^n}{(\ii)^{n^2}{\Omega_j}\wedge\ols{\Omega}_j}$ is plurisubharmonic on $Y_j^\circ=Y_j\setminus \text{Sing}(Y_j)$. Pulling back to $\hat Y_j$ it is a smooth plurisubharmonic function. By the maximal principle and Lemma \ref{construction of holomorphic volume form}, we conclude that the supremum of $ \frac{\omega_0^n}{(\ii)^{n^2}{\Omega_j}\wedge\ols{\Omega}_j}$ on $K\cap Y_j^\circ$ has a uniform upper bound independent of $j$. Then the lemma follows from \eqref{decomposition of the bound} and \eqref{upper bound on compact set}.
\end{proof}
 
As observed by \cite{CGT} (following an earlier idea of Tosatti) it is more convenient to use the Calabi-Yau metrics $\hat\omega_{j, \epsilon}$ as background metrics when we do Moser iteration. By Lemma \ref{non uniform estimate} each $\hat\omega_{j, \epsilon}$ is weakly asymptotic to the cone $\mathcal C$, so by Bishop-Gromov inequality we know the volume of a metric ball of radius $R $ in $(\hat Y_j, \hat \omega_{j, \epsilon})$ is bounded below by $\kappa(\mathcal C) R^{2n}$, where $\kappa(\mathcal C)$ is the volume density of $\mathcal C$. So we have a uniform Sobolev inequality, namely, there exists $S>0$ independent of $j$ and $\epsilon$ such that for all $v\in C^\infty_0(\hat Y_j)$, 
$$\left\|v\right\|^2_{L^{\frac{2n}{n-1}}(\hat Y_j, \hat \omega_{j, \epsilon})}\leqslant S \left\|\nabla_{\hat\omega_{j, \epsilon}}v\right\|^2_{L^2(\hat Y_j, \hat\omega_{j, \epsilon})}.$$

\begin{lemma}\label{C^0 bound}
	There exists a continuous function $\mathcal P: (0, \infty)\rightarrow (0, \infty)$ such that for all $j\in \mathbb Z_{\geqslant 1}$ and $\epsilon\in (0,\lambda^{4j})$,
	
	\begin{itemize}
		\item [(1).]  $\left\Vert\varphi_{j, \epsilon}\right\Vert_{L^{\infty}}\leqslant  \mathcal  P(A_{j, \epsilon}).$
		\item [(2).] $
	 \mathcal  P(A_{j, \epsilon})^{-1}\omega_j\leqslant \hat\omega_{j, \epsilon}\leqslant  \mathcal  P(A_{j, \epsilon})\frac{\mu_j}{\omega_j^n}\omega_j.$
	\end{itemize}
\end{lemma}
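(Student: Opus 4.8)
The two assertions are a uniform $L^\infty$ estimate and a uniform second‑order estimate for the family \eqref{complex monge-ampere equation}, all constants depending only on $A_{j,\epsilon}$ and on fixed data (the uniform Sobolev constant $S$ above, and uniform curvature bounds on the backgrounds). The plan is Moser iteration for (1) and the Aubin--Yau/Chern--Lu maximum principle for (2); throughout, the polynomial decay of $\varphi_{j,\epsilon}$ from Lemma \ref{non uniform estimate} is what legitimizes integration by parts and the maximum principle on the non‑compact $\hat Y_j$. For (1) I would multiply the identity $\omega_{j,\epsilon}^n-\hat\omega_{j,\epsilon}^n=(e^{-f_{j,\epsilon}}-1)\mu_j$ by $|\varphi_{j,\epsilon}|^{p-1}\varphi_{j,\epsilon}$, integrate over $\hat Y_j$, write $\hat\omega_{j,\epsilon}-\omega_{j,\epsilon}=\ii\partial\pp\varphi_{j,\epsilon}$ and integrate by parts; the boundary terms at infinity vanish by Lemma \ref{non uniform estimate} once $p$ is large (this is exactly why $p_0$ was fixed large after \eqref{assumption on index q_0}).

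With $T=\sum_k\omega_{j,\epsilon}^k\wedge\hat\omega_{j,\epsilon}^{n-1-k}\ge\omega_{j,\epsilon}^{n-1}$ and H\"older against any $q\ge q_0$ (conjugate exponent $q'$), this gives
\begin{equation*}
c\,p\int_{\hat Y_j}|\varphi_{j,\epsilon}|^{p-1}\,|\nabla_{\omega_{j,\epsilon}}\varphi_{j,\epsilon}|^2\,\omega_{j,\epsilon}^n\ \le\ \int_{\hat Y_j}|\varphi_{j,\epsilon}|^{p}\,|e^{-f_{j,\epsilon}}-1|\,\mu_j\ \le\ A_{j,\epsilon}\,\big\|\,|\varphi_{j,\epsilon}|^{p}\,\big\|_{L^{q'}(\mu_j)},
\end{equation*}
and, since $\mu_j=\hat\omega_{j,\epsilon}^n$, applying the uniform Sobolev inequality of $\hat\omega_{j,\epsilon}$ to $|\varphi_{j,\epsilon}|^{(p+1)/2}$ converts it into
\begin{equation*}
\big\|\varphi_{j,\epsilon}\big\|_{L^{(p+1)n/(n-1)}(\mu_j)}^{\,p+1}\ \le\ C\,S\,p\,A_{j,\epsilon}\,\big\|\varphi_{j,\epsilon}\big\|_{L^{pq'}(\mu_j)}^{\,p}.
\end{equation*}
The exponent $q_0=\tfrac{n(p_0+1)}{n+p_0}$ is chosen precisely so that at the first step $p=p_0$, $q=q_0$ one has $p_0q_0'=(p_0+1)\tfrac{n}{n-1}$, so the two norms coincide and this step already forces $\|\varphi_{j,\epsilon}\|_{L^{(p_0+1)n/(n-1)}(\mu_j)}\le CSp_0A_{j,\epsilon}$, with only the (non‑uniform) finiteness from Theorem \ref{ existence result of Tian-Yau} as input. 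Iterating with $p$ increasing geometrically and $q$ taken large (so $q'$ close to $1$, legitimate since $A_{j,\epsilon}$ controls every $L^q$‑norm with $q\ge q_0$) makes the accumulated constants converge, and $p\to\infty$ gives $\|\varphi_{j,\epsilon}\|_{L^\infty}\le\mathcal P_1(A_{j,\epsilon})$ for an explicit continuous $\mathcal P_1$.

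For (2) I would run the Aubin--Yau/Chern--Lu estimate on $\hat Y_j$ with reference metric $\omega_{j,\epsilon}$. Since $\hat\omega_{j,\epsilon}$ is Ricci‑flat, Chern--Lu gives $\Delta_{\hat\omega_{j,\epsilon}}\log\operatorname{tr}_{\hat\omega_{j,\epsilon}}\omega_{j,\epsilon}\ge -C_0\operatorname{tr}_{\hat\omega_{j,\epsilon}}\omega_{j,\epsilon}$, with $C_0$ an upper bound for the bisectional curvature of $\omega_{j,\epsilon}$; using $\Delta_{\hat\omega_{j,\epsilon}}\varphi_{j,\epsilon}=n-\operatorname{tr}_{\hat\omega_{j,\epsilon}}\omega_{j,\epsilon}$, the quantity $\log\operatorname{tr}_{\hat\omega_{j,\epsilon}}\omega_{j,\epsilon}-(C_0+1)\varphi_{j,\epsilon}$ satisfies $\Delta_{\hat\omega_{j,\epsilon}}(\cdot)\ge\operatorname{tr}_{\hat\omega_{j,\epsilon}}\omega_{j,\epsilon}-(C_0+1)n$ and tends to $\log n$ at infinity by Lemma \ref{non uniform estimate}, so the maximum principle together with part (1) yields $\operatorname{tr}_{\hat\omega_{j,\epsilon}}\omega_{j,\epsilon}\le\mathcal P_2(A_{j,\epsilon})$. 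Diagonalising $\omega_{j,\epsilon}$ with respect to $\hat\omega_{j,\epsilon}$ and feeding in $\hat\omega_{j,\epsilon}^n=\mu_j$ gives simultaneously $\hat\omega_{j,\epsilon}\ge\mathcal P_2^{-1}\omega_{j,\epsilon}$ and $\hat\omega_{j,\epsilon}\le n\mathcal P_2^{\,n-1}\tfrac{\mu_j}{\omega_{j,\epsilon}^n}\omega_{j,\epsilon}$. Finally I would replace $\omega_{j,\epsilon}$ by $\omega_j$: off a fixed compact set using $\tfrac12\omega_j\le\omega_j+\ii\partial\pp u_{j,\epsilon}\le C\omega_j$ and $\epsilon\beta\le Cr_j^{-1/2}\omega_j$ from Proposition \ref{preconditioning}(4), Lemma \ref{construction of exact background kahler forms} and Lemma \ref{decay of beta}, and on the compact set using $\omega_j=A_0\omega_0$ there; this converts the two bounds into the stated ones with a new continuous $\mathcal P$, the factor $\tfrac{\mu_j}{\omega_j^n}$ on the right absorbing the degeneration of $\pi_j^*\omega_j$ along the exceptional set of $\pi_j$.

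I expect the main obstacle to be making the second‑order estimate uniform near the exceptional set of $\pi_j$. There $\omega_{j,\epsilon}$ is essentially $\epsilon\beta$ plus a semipositive degenerate term, so its bisectional curvature bound $C_0$ is only $O(\epsilon^{-1})$ and the naive Chern--Lu argument degenerates as $\epsilon\to 0$; obtaining a bound depending only on $A_{j,\epsilon}$ requires the more careful arguments of \cite{CGT}, exploiting that $\hat\omega_{j,\epsilon}$ is Calabi--Yau on the fixed resolution $X=\hat Y_j$, that $\pi_j^*\mu_j$ is a fixed volume form there up to a uniformly controlled constant (via the normalization of $\Omega_j$ and crepancy), and that $Y_j\to W$ as smooth subvarieties away from $\mathfrak S$ — typically through an auxiliary cutoff adapted to the exceptional divisor rather than a plain global maximum‑principle argument. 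The double limit in $\epsilon$ and then $j$ described in the text is then carried out once these uniform estimates are established.
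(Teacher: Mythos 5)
Your part (1) is essentially the paper's Moser iteration but contains one technical slip. Having arrived at the integrated identity involving the $(n-1,n-1)$ form $T=\sum_{k=0}^{n-1}\hat\omega_{j,\epsilon}^k\wedge\omega_{j,\epsilon}^{n-1-k}$, the paper retains the single term $\hat\omega_{j,\epsilon}^{n-1}$ rather than $\omega_{j,\epsilon}^{n-1}$, so that the resulting gradient energy is $\int \ii\p|\varphi_{j,\epsilon}|^{\frac{p+1}{2}}\wedge\pp|\varphi_{j,\epsilon}|^{\frac{p+1}{2}}\wedge\hat\omega_{j,\epsilon}^{n-1}$, i.e.\ the Dirichlet energy with respect to $\hat\omega_{j,\epsilon}$ against the volume form $\hat\omega_{j,\epsilon}^n=\mu_j$. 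That is what permits a direct application of the uniform Sobolev inequality of $\hat\omega_{j,\epsilon}$ (whose constant is controlled by Bishop--Gromov), and it is the whole point of the Tosatti/Collins--Guo--Tong observation the paper cites. Your version keeps the gradient in $\omega_{j,\epsilon}$ but then invokes the Sobolev inequality of $\hat\omega_{j,\epsilon}$; those two metrics are precisely what the lemma is trying to compare uniformly, so this step is circular. Your remark that $p_0 q_0'=(p_0+1)\tfrac{n}{n-1}$ closes the base case is correct and matches the paper's first displayed inequality.

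For part (2) you correctly identify the real obstacle --- that Chern--Lu with reference $\omega_{j,\epsilon}$ has bisectional curvature constant of size $O(\epsilon^{-1})$ near the exceptional set --- but you then guess the wrong fix. The paper's resolution is considerably simpler and sharper than a CGT-style cutoff: take the Chern--Lu reference to be $\pi_j^*\omega_j$ itself. Its bisectional curvature is bounded uniformly in $j$ and $\epsilon$ by construction (Lemma~\ref{construction of exact background kahler forms}), and Proposition~\ref{preconditioning}(4) gives $\operatorname{tr}_{\hat\omega_{j,\epsilon}}\omega_{j,\epsilon}\geqslant\operatorname{tr}_{\hat\omega_{j,\epsilon}}\bigl(\omega_j+\ii\p\pp u_{j,\epsilon}\bigr)\geqslant\tfrac12\operatorname{tr}_{\hat\omega_{j,\epsilon}}\omega_j$, so $\operatorname{tr}_{\hat\omega_{j,\epsilon}}(\ii\p\pp\varphi_{j,\epsilon})\leqslant n-\tfrac12\operatorname{tr}_{\hat\omega_{j,\epsilon}}\omega_j$. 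The degeneracy of $\pi_j^*\omega_j$ along the exceptional set is not an obstruction but exactly what closes the maximum principle: $F=\log\operatorname{tr}_{\hat\omega_{j,\epsilon}}\omega_j-2(B+1)\varphi_{j,\epsilon}$ tends to $-\infty$ on $\pi_j^{-1}(\operatorname{Sing}(Y_j))$ and to $\log n$ at spatial infinity, so $\sup F$ is attained either at infinity (giving a trivial bound) or at an interior point away from the exceptional set (where the differential inequality gives $\operatorname{tr}_{\hat\omega_{j,\epsilon}}\omega_j\leqslant C$ depending only on $\|\varphi_{j,\epsilon}\|_{L^\infty}$). This produces the lemma's bounds directly in terms of $\omega_j$, making the post-hoc replacement $\omega_{j,\epsilon}\mapsto\omega_j$ in your last paragraph unnecessary.
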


  \begin{proof} The proof is standard, see for example \cite[Section 4]{CGT}. For completeness, we include some details.
  
  	(1). We have
  	$\hat\omega^{n}_{j, \epsilon}-\omega_{j, \epsilon}^{n}= \ii\partial\pp \varphi_{j, \epsilon} \wedge \sum_{k=0}^{n-1}\left(\hat\omega^{k}_{j, \epsilon} \wedge \omega_{j, \epsilon}^{n-1-k}\right).$
  	For any exponent $p\geqslant p_0$, multiplying both sides by $|\varphi_{j, \epsilon}|^{p-1}\varphi_{j, \epsilon}$ and integrating by parts (the estimates obtained in Theorem \ref{ existence result of Tian-Yau} justify this), we get 
  	\begin{equation*}
  		\int_{X}|\varphi_{j, \epsilon}|^{p-1}\varphi_{j, \epsilon}(e^{-f_{j, \epsilon}}-1)\mu_j= \frac{4p}{(p+1)^2}\int_{X} \ii\p |\varphi_{j, \epsilon}|^{\frac{p+1}{2}}\wedge \pp |\varphi_{j, \epsilon}|^{\frac{p+1}{2}}\wedge \sum_{k=0}^{n-1}\left(\hat\omega^{k}_{j, \epsilon} \wedge \omega_{j, \epsilon}^{n-1-k}\right).
  	\end{equation*}
  	 In the following $C$ always denote a constant  depending only on $n$ and $p_0$. By Sobolev inequality, we get
    \begin{equation*}
        \left(\int_{X}|\varphi_{j, \epsilon}|^{(p+1)\frac{n}{n-1}}\mu_j\right)^{\frac{n-1}{n}}\leqslant CSp\int_{X}|\varphi_{j, \epsilon}|^{p}|e^{-f_{j, \epsilon}}-1|\mu_j.
    \end{equation*}
Applying the H\"older's inequality to the right hand side, we obtain 
\begin{equation*}
    \begin{aligned}
        \left\Vert\varphi_{j, \epsilon}\right\Vert_{L^{\frac{n(p_0+1)}{n-1}}(\mu_j)}&\leqslant C S \left\Vert e^{-f_{j, \epsilon}}-1\right\Vert_{L^{q_0}(\mu_j)},\\
        \left\Vert\varphi_{j, \epsilon}\right\Vert^{p+1}_{L^{\frac{n(p+1)}{n-1}}(\mu_j)}&\leqslant CSp
		 \left\Vert e^{-f_{j, \epsilon}}-1\right\Vert_{L^{p+1}(\mu_j)}\left\Vert\varphi_{j, \epsilon}\right\Vert^p_{L^{p+1}(\mu_j)} \text{ for all $p\geqslant p_0$}.
    \end{aligned}
\end{equation*}
Then a standard Moser iteration argument completes the proof.

(2).  Note that the second inequality follows from the first one and the elementary inequality that $\operatorname{tr}_{\beta}(\alpha)\leqslant \frac{\alpha^n}{\beta^n}\left(\operatorname{tr}_{\alpha}(\beta)\right)^{n-1}$ for any two positive $(1,1)$-forms $\alpha$ and $\beta$. The first inequality here is a consequence of Chern-Lu's inequality. More precisely, by construction the bisectional curvature of $\omega_j$ has a uniform upper bound independent of $j$. Then Chern-Lu's inequality implies that there is a constant $B$ independent of $j$ and $\epsilon$ such that,
\begin{equation*}
	\Delta_{\hat\omega_{j, \epsilon}}\log(\operatorname{tr}_{\hat\omega_{j, \epsilon}}\omega_j)\geqslant -B \operatorname{tr}_{\hat\omega_{j, \epsilon}}\omega_j.
\end{equation*}
 	By the definition of $\hat\omega_{j, \epsilon}$ and the estimates for $u_{j,\epsilon}$ in Proposition \ref{preconditioning}, we have 
$$
	\operatorname{tr}_{\hat\omega_{j, \epsilon}}(\ii\p\pp \varphi_{j, \epsilon})\leqslant n-\frac{1}{2}\operatorname{tr}_{\hat\omega_{j, \epsilon}}\omega_j.
$$
Therefore,
\begin{equation*}
	\Delta_{\hat\omega_{j, \epsilon}}\left(\log(\operatorname{tr}_{\hat\omega_{j, \epsilon}}\omega_j)-2(B+1)\varphi_{j, \epsilon}\right) \geqslant  \operatorname{tr}_{\hat\omega_{j, \epsilon}}\omega_j-2(B+1)n.
\end{equation*} Now we apply the maximum principle to the function $F=\log(\operatorname{tr}_{\hat\omega_{j, \epsilon}}\omega_j)-2(B+1)\varphi_{j, \epsilon}$. Since $\varphi_{j, \epsilon}$ tends to zero at infinity, $F$ converges to $\log n$ at infinity. Notice $\pi_j^*\omega_j$ is degenerate along the exceptional set, so $F$ tends to $-\infty$ on $\pi_j^{-1}(\text{Sing}(Y_j))$. If the supremum of $F$ is attained at infinity, then we automatically have a uniform upper bound for $\log (\operatorname{tr}_{\hat\omega_{j, \epsilon}}(\omega_j))$. If the maximum is achieved on $\hat Y_j$,  Then $\log (\operatorname{tr}_{\hat\omega_{j, \epsilon}}(\omega_j))\leqslant C$, where $C$ depends only on  $\left\Vert\varphi_{j, \epsilon}\right\Vert_{L^{\infty}}$.
 \end{proof}

  Using this we can also improve Lemma \ref{non uniform estimate}, namely, we can make the dependence of $K_{j,\epsilon}$ and $Q_{k, j, \epsilon}$ on $j$ and $\epsilon$ to be  through  $A_{j, \epsilon}.$ Since we are free to pass to subsequences when taking limits and we do not need uniqueness of the limits, we may assume in the following that $A_j=\lim_{\epsilon\rightarrow0}A_{j,\epsilon}$ for every $j$.
For fixed $j$ since as $\epsilon\rightarrow 0$ we have $A_{j, \epsilon}\rightarrow A_j$, passing to a subsequence $\epsilon_{j,l}$, as $l\rightarrow\infty$ we can make $\varphi_{j, \epsilon_{j, l}}$ converge locally smoothly to a limit $\varphi_j$ and $u_{j, \epsilon_{j, l}}$ converge locally smoothly to a limit $u_j$ on $Y_j^\circ$. The metric $\hat\omega_j=\omega_j+\sqrt{-1}\p\bp(u_j+\varphi_j)$ defines a Calabi-Yau metric on $Y_j\setminus \text{Sing}(Y_j)$, i.e. $\hat{\omega}_j^n=(\ii)^{n^2}\Omega_j\wedge\ols{\Omega}_j.$ Moreover, we have
	\begin{itemize}
		\item On $Y_j^\circ$, \begin{equation}\label{e:phij uniform bound}|\varphi_{j}|\leqslant \mathcal P(A_j).
		\end{equation}This implies $\hat\omega_j$ extends as a closed positive $(1,1)$ current on $Y_j$.
		\item  On $Y_j^{\circ}$, \begin{equation}\label{e:lower bound}\hat\omega_{j}\geqslant \mathcal P(A_j)^{-1}\omega_j.
		\end{equation}
		\item  There exist compact sets $K_j$ depending only on $A_j$ and continuous functions $Q_k:(0, \infty)\rightarrow (0, \infty)$ for all $k\geqslant0$ such that on $Y_j\setminus K_j$ $$|\nabla^k_{\omega_{j}}\varphi_{j}|\leqslant  Q_k(A_j) r_j^{-\delta_3-k}.$$  
  \item there exists a compact set $K\subset \mathbb C^N$ such that $u_j$ is a constant on $K\cap Y_j$ and on $Y_j^\circ$ $$\left|\nabla^k_{\omega_j}u_{j}\right|_{\omega_j}\leq C_k (r_j+1)^{-\delta_2+2-k}.$$
	\end{itemize}
	Moreover, a standard argument using the Bishop-Gromov volume comparison (see  \cite[Lemma 4.14]{CGT}) implies that for any compact set $K\subset\C^N$, there is a constant $C_K>0$ independent of $j$ and $\epsilon$ such that \begin{equation*}
		\diam (\pi_j^{-1}(K\cap Y_j),\hat\omega_{j, \epsilon})\leqslant C_K.
	\end{equation*}
	So the diameter of $K\cap Y_j^\circ$ with respect to $\hat\omega_j$ is also uniformly bounded independent of $j$.

By construction  we know that as $j\rightarrow\infty$, $\omega_j$ converges smoothly to $\omega_\infty=\sqrt{-1}\p\bp \psi_\infty$ on $W\setminus \{0\}$, and $\psi_\infty$ is continuous across $0$.   Using the above estimates and Lemma \ref{l:Aj uniform bound} by passing to a subsequence we may assume that $\varphi_j$ converges locally smoothly to a smooth function $\varphi_\infty$ and $u_j$ converges locally smoothly to a smooth function $u_{\infty}$ and on $W\setminus \{0\}$, such that $\hat\omega_\infty=\omega_\infty+\sqrt{-1}\p\bp(u_{\infty}+\varphi_\infty)$ is a K\"ahler Ricci flat metric on $W\setminus\{0\}$.Moreover $u_{\infty}$ is a constant near $0$, $\varphi_{\infty}$ is globally bounded,
$\hat\omega_\infty\geqslant C^{-1}\omega_\infty$ and there exists a compact set $K$ such that on $W\setminus K$
\begin{equation}\label{estimate for u and phi}
|\nabla_{\omega_\infty}^ku_\infty|\leqslant C_k r_\infty^{-\delta_2+2-k},\quad|\nabla_{\omega_\infty}^k\varphi_\infty|\leqslant C_k r_\infty^{-\delta_3-k},
\end{equation}
  where we recall $r_\infty=r\circ p_\infty^{-1}$. Notice that since $\Omega_j$ is parallel with respect to $\hat \omega_j$, we may assume $\Omega_j$ converges to a holomoprhic volume form $\Omega_{\infty}$ on $W\setminus\{0\}$ such that $\hat{\omega}^n_\infty=(\ii)^{n^2}\Omega_{\infty}\wedge\ols{\Omega}_{\infty}.$  Moreover the metric $\hat \omega_{\infty}$ has finite diameter on $K\cap W$, and it is asymptotic to the cone $\mathcal C$ at infinity.

\section{Proof of the main results}\label{s6}
We now investigate the singular behavior of  $\hat\omega_\infty$ at $0$. To this end we need to study the singular behavior of $\hat\omega_j$, so we first fix $j$ and consider the family $\hat\omega_{j, l}\equiv \hat\omega_{j, \epsilon_{j,l}}$ as $l\rightarrow\infty$. 
Passing to a further subsequence we let $({Z_j},d_{{Z_j}})$ be a pointed Gromov-Hausdorff limit of $(\hat Y_j, \hat\omega_{j, l}, p_j)$ (where $p_j$ is a point on the exceptional set of $\pi_j)$ as $l\rightarrow\infty$. As mentioned before, the metrics $\hat\omega_{j, \epsilon}$ are uniformly volume non-collapsing, so the general convergence theory leads to a decomposition  ${Z_j}=\mathcal R\cup \mathcal S$,  where the regular set $\mathcal R$ is a smooth open  manifold with $d_{Z_j}|_{\mathcal R}$ induced by a Calabi-Yau metric $(\omega_{\mathcal R}, J_{\mathcal R})$, and the singular set $\mathcal S$ is closed and has Hausdorff codimension at least 4.  
 
 \begin{proposition}\label{p: metric completion}
${Z_j}$ agrees with the metric completion of $(Y_j^\circ, \hat\omega_j)$ and is naturally homeomorphic to $Y_j$. 
\end{proposition}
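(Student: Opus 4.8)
The plan is to identify $Z_j$ first, as a metric space, with the metric completion $\overline{(Y_j^\circ,\hat\omega_j)}$, and then, as a topological space, with $Y_j$; the scheme follows the line of argument of \cite{CGT}, specialized to the (fixed) variety $Y_j$. First I would use the locally smooth convergence $\hat\omega_{j,l}\to\hat\omega_j$ on $Y_j^\circ$ --- together with the uniform non-collapsing and the uniform diameter bounds $\diam(\pi_j^{-1}(K\cap Y_j),\hat\omega_{j,l})\leqslant C_K$ --- to build a natural map $\iota\colon Y_j^\circ\to Z_j$. A point $x\in Y_j^\circ$, regarded as a point of $\hat Y_j$ via the biholomorphism $\pi_j$, has a well-defined limit $\iota(x)\in Z_j$ under the Gromov--Hausdorff approximations, the limit being independent of the approximating sequence since the metrics converge smoothly near $x$; the same smoothness shows $\iota$ is an open local isometry into the regular set $\mathcal R$, hence $1$-Lipschitz: $d_{Z_j}(\iota(x),\iota(y))\leqslant d_{\hat\omega_j}(x,y)$. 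To see that $\iota$ is injective I would note that the coordinate functions $z_1,\dots,z_N$ of $\C^N$, restricted to $Y_j$ and pulled back to $\hat Y_j$, converge to holomorphic functions on $\mathcal R$ with $z_i\circ\iota=z_i$, while $(z_1,\dots,z_N)$ already embeds $Y_j^\circ$ into $\C^N$. Thus $\iota$ is a Riemannian isometry onto an open subset of $\mathcal R$, and it extends to a $1$-Lipschitz map $\bar\iota$ from $\overline{(Y_j^\circ,\hat\omega_j)}$ to the complete space $Z_j$.

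Next I would show $\bar\iota$ is a bijective isometry. The crucial point is that the exceptional fibres collapse: since $\omega_j$ is $\partial\bar\partial$-exact (Lemma~\ref{construction of exact background kahler forms}) and $\beta$ is the fixed form of \eqref{e:decomposition}, one has $[\hat\omega_{j,l}]=\epsilon_{j,l}[\beta]$ in $H^2(\hat Y_j;\R)$, so $\int_{Z}\hat\omega_{j,l}^{\,k}=\epsilon_{j,l}^{\,k}\int_Z\beta^{k}\to 0$ for every compact $k$-cycle $Z$ in $\hat Y_j$, in particular for the exceptional fibres of $\pi_j$ over $\operatorname{Sing}(Y_j)$. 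Combining this with the non-collapsing and the uniform bound $\int_{\pi_j^{-1}(K\cap Y_j)}\hat\omega_{j,l}^{\,n}=\int_{K\cap Y_j}\mu_j<\infty$, one concludes, as in \cite{CGT}, that $\iota(Y_j^\circ)$ has full $\mathcal H^{2n}$-measure in $Z_j$, that $\mathcal R=\iota(Y_j^\circ)$, and that $Z_j\setminus\iota(Y_j^\circ)\subset\mathcal S$ consists precisely of the ideal points of the completion. Since $\mathcal S$ has Hausdorff codimension at least $4$, the regular set is weakly geodesically convex: almost every pair of regular points is joined by a minimizing geodesic of $Z_j$ lying in $\mathcal R$. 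Together with the trivial bound $d_{Z_j}\leqslant$ (length distance in $\mathcal R$) and lower semicontinuity, this forces $d_{Z_j}|_{\mathcal R}$ to coincide with the length distance of $(\mathcal R,\omega_{\mathcal R})$, which under $\iota$ is $d_{\hat\omega_j}$. Passing to completions, and using that $\mathcal R$ is dense in $Z_j$, we get that $\bar\iota$ is a bijective isometry, so $Z_j=\overline{(Y_j^\circ,\hat\omega_j)}$.

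Finally I would identify this completion with $Y_j$. From $\omega_j=A_0\omega_0$ on $K_0\cap Y_j^\circ$ and $\hat\omega_j\geqslant\mathcal P(A_j)^{-1}\omega_j$ (see \eqref{e:lower bound}) one gets $\hat\omega_j\geqslant c\,\omega_0|_{Y_j}$ near $\operatorname{Sing}(Y_j)$, hence $d_{\hat\omega_j}(x,y)\geqslant c^{1/2}|x-y|_{\C^N}$ there; so every $\hat\omega_j$-Cauchy sequence in $Y_j^\circ$ converges in $\C^N$ to a point of the affine variety $Y_j$. Together with the extended coordinate functions, this yields a continuous, proper map $\overline{(Y_j^\circ,\hat\omega_j)}\to Y_j\subset\C^N$ which is a homeomorphism over $Y_j^\circ$ and is onto $\operatorname{Sing}(Y_j)$ by density; over each $p\in\operatorname{Sing}(Y_j)$ there is exactly one ideal point, since a punctured neighbourhood of $p$ in $Y_j^\circ$ is connected (normality of $Y_j$) while the exceptional fibre over $p$ collapses. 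A proper continuous bijection with finite fibres is a homeomorphism, so this identifies $Z_j$ with $Y_j$, and composing with $\bar\iota$ proves the proposition.

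The step I expect to be the main obstacle is the second one: showing that the exceptional fibres collapse in a way that forces $\mathcal R=\iota(Y_j^\circ)$ and makes $Z_j$ \emph{exactly} the metric completion, and simultaneously that no minimizing geodesic of $Z_j$ shortcuts through $\mathcal S$. Both facts are standard in this circle of ideas (\cite{DS2,CGT}), but they rely essentially on the uniform non-collapsing and on the degeneration estimates assembled in the previous sections, and they carry the real content of the argument.
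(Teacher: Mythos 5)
Your proposal follows the Song / Collins--Guo--Tong line of argument, which the paper explicitly declines to pursue ("This would likely follow from the arguments of J.~Song \cite{Song} (and its generalization in \cite{CGT}). We outline a different and more direct proof here"). The paper's actual route is complex-analytic: it realizes $\pi_\infty\colon Z_j\to Y_j$ as a limit of the resolutions $\pi_j\colon(\hat Y_j,\hat\omega_{j,l})\to Y_j$, proves Lemma~\ref{l:potential gradient bound} --- that the potential $\hat\varphi_j$ extends smoothly to $\mathcal R$ with a local $C^1$ bound, obtained by comparing with the solution of a Dirichlet problem via Cheng--Yau gradient estimates and using a Donaldson--Sun cut-off function adapted to the codimension-$\geqslant 4$ singular set --- and then uses $e^{-k\hat\varphi_j}$ as a weight to run the H\"ormander $L^2$ argument on the trivial line bundle over the complete K\"ahler manifold $Y_j^\circ$, producing peak sections $f\in\mathcal V_k$ whose sup- and gradient-norms are controlled by Moser iteration on the non-collapsed limit space $Z_j$. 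Given $x_1\neq x_2\in Z_j$, such $f$'s separate them; and since they are simultaneously holomorphic functions on the affine variety $Y_j$ (by normality), $\pi_\infty(x_1)\neq\pi_\infty(x_2)$, so $\pi_\infty$ is injective. Finiteness of $Z_j\setminus Y_j^\circ=\pi_\infty^{-1}(\operatorname{Sing}(Y_j))$ and the metric-completion statement then follow immediately. None of this material --- Lemma~\ref{l:potential gradient bound}, the weighted $L^2$ spaces $\mathcal V_k$, the separating sections --- appears in your proposal.

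The gap in your proposal is at precisely the step you flag. The cohomological identity $\int_E\hat\omega_{j,l}^{\,m}=\epsilon_{j,l}^{\,m}\int_E\beta^m\to 0$ does show the exceptional fibres have vanishing volume, and Colding volume convergence together with $\hat\omega_j^n=(\ii)^{n^2}\Omega_j\wedge\bar\Omega_j$ does give that $\iota(Y_j^\circ)$ has full $\mathcal H^{2n}$-measure. But vanishing volume of a fibre controls neither its diameter nor its topology, so it does \emph{not} by itself show that the fibre over each $p\in\operatorname{Sing}(Y_j)$ --- either in $Z_j$ or in the metric completion --- is a single point; a priori there could be a positive-dimensional analytic piece of $\mathcal R\setminus\iota(Y_j^\circ)$, or a nontrivial stratum of $\mathcal S$, sitting over $p$. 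Your closing argument --- "over each $p$ there is exactly one ideal point, since a punctured neighbourhood of $p$ in $Y_j^\circ$ is connected (normality of $Y_j$) while the exceptional fibre over $p$ collapses" --- does not repair this: normality gives topological connectedness of punctured neighbourhoods, but what is actually needed is that the $\hat\omega_j$-diameter of small annuli around $p$ tends to zero (equivalently, that all $\hat\omega_j$-Cauchy sequences approaching $p$ in $\C^N$ are equivalent), and this has no a priori relation to the cohomology class or to connectedness. Establishing that one-point-per-singularity statement is exactly what the H\"ormander peak-function construction in the paper accomplishes, and it carries the weight of the proof. Without a substitute --- a partial $C^0$ estimate, a uniform Skoda inequality, or the machinery of \cite{Song,CGT} actually unpacked and adapted to this non-compact, non-polynomially-conical setting --- the proposal as written is an outline, not a proof.
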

 
This would likely follow from the arguments of J. Song \cite{Song} (and its generalization in \cite{CGT}). We outline a different and more direct proof here, which may be of independent interest. It is not hard to see that we can naturally view $ (Y_j^\circ,\hat\omega_j)$ as embedded in $(\mathcal R, \omega_{\mathcal R})$ as an open K\"ahler manifold. Moreover, since $\omega_j\leqslant \mathcal{P}(A_j)\hat\omega_{j,l}$, by passing to a further subsequence, one can take the limit of the holomorphic maps $\pi_j: (\hat Y_j, \hat\omega_{j, l})\rightarrow Y_j\subset \C^N$ and obtain a surjective continuous map $\pi_\infty: {Z_j}\rightarrow Y_j\subset \C^N$, which is the identity on $Y_j^\circ$ and holomorphic on $\mathcal{R}$. It then follows that $\mathcal R\setminus Y_j^\circ$ is a complex analytic set in $\mathcal R$, given by $\pi_\infty^{-1}(\text{Sing}(Y_j))\cap \mathcal{R}$.
By the discussion in Section \ref{s5}, on $Y_j^\circ$ we may write $\hat\omega_j=\sqrt{-1}\p\bp\hat\varphi_j$, where $\hat\varphi_j$ is locally bounded across $\text{Sing}(Y_j)$. A key point is the following 

\begin{lemma}\label{l:potential gradient bound}
The function $\hat\varphi_j$ extends smoothly to $\mathcal R$. Moreover, for any compact set $K\subset {Z_j}$ we have
    $$\sup_{K\cap \mathcal R}\left|\nabla_{\omega_{\mathcal R}}\hat\varphi_j\right|_{\omega_{\mathcal R}}<\infty.$$
\end{lemma}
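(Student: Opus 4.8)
The plan is to establish the extension and gradient bound in two stages: first prove that $\hat\varphi_j$ extends continuously (indeed smoothly) across the analytic set $\mathcal E\equiv\pi_\infty^{-1}(\operatorname{Sing}(Y_j))\cap\mathcal R$ inside $\mathcal R$, and then promote this to a gradient bound on compact subsets of $Z_j$ that meet $\mathcal R$. For the first stage I would work locally near a point of $\mathcal E$: since $\hat\omega_j=\sqrt{-1}\p\bp\hat\varphi_j$ is a smooth Calabi-Yau metric on $\mathcal R$ and $\hat\varphi_j$ is a bounded plurisubharmonic function (it equals the potential $u_j+\varphi_j+\psi_j$ up to a pluriharmonic function, using the decomposition $\hat\omega_j=\omega_j+\sqrt{-1}\p\bp(u_j+\varphi_j)$ with $\omega_j=\sqrt{-1}\p\bp\psi_j$ from Lemma \ref{construction of exact background kahler forms}), and since $\mathcal E$ has complex codimension at least one in the smooth manifold $\mathcal R$, removable-singularity theory for bounded psh functions shows $\hat\varphi_j$ extends to a psh function on $\mathcal R$. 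But $\hat\omega_j$ already extends as a genuine smooth Kähler metric on all of $\mathcal R$ (that is how $\mathcal R$ with its Calabi-Yau structure $(\omega_{\mathcal R},J_{\mathcal R})$ was produced), so locally on $\mathcal R$ there is a smooth local potential $\phi$ for $\omega_{\mathcal R}$, and $\hat\varphi_j-\phi$ is a bounded pluriharmonic function on the complement of a codimension $\geqslant 1$ analytic set, hence extends smoothly (bounded harmonic functions extend across such sets). Therefore $\hat\varphi_j$ extends smoothly to $\mathcal R$.

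For the gradient bound, fix a compact $K\subset Z_j$ and set $K'=K\cap\mathcal R$. On $K'$ I would use that $\hat\varphi_j$ is now a smooth function with $\sqrt{-1}\p\bp\hat\varphi_j=\omega_{\mathcal R}$, so the $\omega_{\mathcal R}$-Laplacian of $\hat\varphi_j$ equals the constant $n$, i.e. $\hat\varphi_j$ is a fixed-source harmonic-type function for the Calabi-Yau metric. Cover $K'$ by finitely many balls; on each ball one has local uniform geometry of $(\mathcal R,\omega_{\mathcal R})$ (as a noncollapsed Calabi-Yau limit with two-sided bounds on a fixed region, together with the $\epsilon$-regularity of such limits away from $\mathcal S$), so interior gradient estimates for the Poisson equation $\Delta_{\omega_{\mathcal R}}\hat\varphi_j=n$ give $|\nabla_{\omega_{\mathcal R}}\hat\varphi_j|_{\omega_{\mathcal R}}\leqslant C(\|\hat\varphi_j\|_{L^\infty}+1)$ on a slightly smaller region, and $\|\hat\varphi_j\|_{L^\infty}$ on $K'$ is controlled by \eqref{e:phij uniform bound} together with the boundedness of $u_j$ and $\psi_j$ on $K\cap Y_j^\circ$. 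The only subtlety is that points of $K'$ may be arbitrarily close to $\mathcal S$; this is handled because the estimate is interior in $\mathcal R$ with a constant depending only on the local geometry, which degenerates only as one approaches $\mathcal S$, but near $\mathcal S$ the bound $\hat\omega_j\geqslant C^{-1}\omega_j$ from \eqref{e:lower bound} and the smoothness across $\mathcal E$ let us instead bound $\nabla\hat\varphi_j$ using $\pi_\infty$: since $\pi_\infty$ is Lipschitz (as $\omega_j\leqslant\mathcal P(A_j)\hat\omega_{j,l}$ passes to the limit) and $\hat\varphi_j$ is, up to the smooth potential $\psi_j\circ\pi_\infty$ and the bounded functions $u_j,\varphi_j$, pulled back from a neighborhood in $\C^N$ where the potentials are smooth, the gradient stays bounded uniformly up to $\mathcal S$.

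I expect the main obstacle to be precisely this behavior near the singular set $\mathcal S$ of the Gromov-Hausdorff limit $Z_j$ (as opposed to the analytic set $\mathcal E$ inside $\mathcal R$, which is the easy part): one must argue that the gradient of $\hat\varphi_j$ measured in $\omega_{\mathcal R}$ does not blow up as one approaches $\mathcal S$. The clean way to do this is to avoid working directly on $Z_j$ and instead transfer the bound from the smooth approximants: on each $(\hat Y_j,\hat\omega_{j,l})$ one has the genuine smooth potential and the Chern-Lu estimate of Lemma \ref{C^0 bound} giving $\hat\omega_{j,l}\geqslant\mathcal P(A_j)^{-1}\omega_j$, and the Laplacian identity $\Delta_{\hat\omega_{j,l}}\hat\varphi_{j,l}=n+O(\text{error})$; a Cheng-Yau / Bochner-type gradient estimate for $\hat\varphi_{j,l}$ on metric balls of definite size, using the lower Ricci bound (Ricci flatness) and the $L^\infty$ bound on $\hat\varphi_{j,l}$, yields $|\nabla_{\hat\omega_{j,l}}\hat\varphi_{j,l}|\leqslant C$ on $\pi_j^{-1}(K\cap Y_j)$ with $C$ independent of $l$ (the diameter of this preimage being uniformly bounded, as recorded in Section \ref{s5}). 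Passing $l\to\infty$ and using that the convergence is smooth on $\mathcal R$ then gives the desired bound on $K\cap\mathcal R$. I would present the argument in this order: smooth extension across $\mathcal E$ first, then the uniform gradient estimate on the approximants, then pass to the limit.
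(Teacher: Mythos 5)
Your first paragraph (smooth extension of $\hat\varphi_j$ across $\mathcal E = \pi_\infty^{-1}(\operatorname{Sing}(Y_j))\cap\mathcal R$) is essentially the paper's argument: $\hat\varphi_j$ is locally bounded, $\mathcal E$ is pluripolar, removable singularity for psh functions plus elliptic regularity gives the smooth extension. That part is fine.

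The gradient bound is where the proposal runs into trouble, and you have correctly identified the hard point (behavior near $\mathcal S$) but neither of your two suggested routes actually closes it. Your first suggestion --- that near $\mathcal S$ the potential is "pulled back from a neighborhood in $\C^N$ where the potentials are smooth" --- is circular: $\varphi_j$ lives only on $Y_j^\circ$ (or on $\mathcal R$), and its regularity near $\operatorname{Sing}(Y_j)$ is precisely what is at issue; Lipschitzness of $\pi_\infty$ with respect to $\omega_j$ does not control $\nabla_{\omega_{\mathcal R}}$ since $\omega_j$ degenerates there. Your second suggestion --- apply a Cheng-Yau gradient estimate directly to $\hat\varphi_{j,l}$ on the approximants $(\hat Y_j,\hat\omega_{j,l})$ --- hinges on the claim that $\Delta_{\hat\omega_{j,l}}\hat\varphi_{j,l}=n+O(\text{error})$ with uniformly bounded error. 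But the candidate potential on the approximant is $\hat\varphi_{j,\epsilon}=\pi_j^*\psi_j+\pi_j^*u_{j,\epsilon}+\varphi_{j,\epsilon}$, for which $\hat\omega_{j,\epsilon}-\sqrt{-1}\p\bp\hat\varphi_{j,\epsilon}=\epsilon\beta$, hence $\Delta_{\hat\omega_{j,\epsilon}}\hat\varphi_{j,\epsilon}=n-\epsilon\operatorname{tr}_{\hat\omega_{j,\epsilon}}\beta$. There is no established uniform-in-$\epsilon$ bound on $\epsilon\operatorname{tr}_{\hat\omega_{j,\epsilon}}\beta$ near the exceptional set: the lower bound $\hat\omega_{j,\epsilon}\geqslant\mathcal P(A_{j,\epsilon})^{-1}\omega_j$ from Lemma~\ref{C^0 bound} is useless there because $\pi_j^*\omega_j$ degenerates along the exceptional divisors, and in the transition region where $\hat\omega_{j,\epsilon}$ interpolates between $\epsilon\beta$ and $\pi_j^*\omega_j$ the trace has no obvious uniform control. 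So the Cheng-Yau argument is not available for $\hat\varphi_{j,l}$ itself.

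The paper's proof sidesteps this exactly: it applies Cheng-Yau not to $\hat\varphi_{j,l}$ but to auxiliary functions $\eta_l$ solving $\Delta_{\hat\omega_{j,l}}\eta_l=n$ \emph{exactly} on a domain $\hat U=\pi_j^{-1}(U)$, with boundary data $\hat\varphi_j|_{\p U}$. For $\eta_l$ the uniform gradient estimate is unproblematic (the Laplacian is exactly $n$, the Ricci curvature vanishes, and one has uniform $L^\infty$ and diameter control), so one passes to a limit $\eta$ with $|\nabla_{\hat\omega_j}\eta|\leqslant C$. Then $f=\hat\varphi_j-\eta$ is harmonic on $\pi_\infty^{-1}(U)\cap\mathcal R$ with $f|_{\p U}=0$, and the final step is a removable-singularity argument across $\mathcal S$ (using the good cut-off functions of \cite[Proposition 3.5]{DS1}, the codimension-$\geqslant 4$ bound on $\mathcal S$, and the local Cheng-Yau gradient bound $|\nabla\hat\varphi_j|\lesssim d(\cdot,\mathcal S)^{-1}$) to conclude $f\equiv 0$, so $\hat\varphi_j=\eta$ inherits the gradient bound. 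Your proposal is missing this comparison-function device, and without it the step you flagged as "the main obstacle" remains open.
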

\begin{proof}
On $Y_j^{\circ}$, the form   $\sqrt{-1}\p\bp\hat\varphi_j=\hat\omega_j=\omega_{\mathcal R}$ is smooth. Since $\mathcal R\setminus Y_j^{\circ}$ is a closed pluripolar set and $\hat\varphi_j$ is locally bounded across $\text{Sing}(Y_j)$, by well-known removable singularity theorem for plurisubharmonic functions (see \cite[Chapter I,Theorem 5.24]{Demailly}), we obtain that the same equation holds on $\mathcal R$ in the sense of currents, so elliptic regularity gives a smooth extension of $\hat\varphi_j$ to $\mathcal R$.

To prove the gradient bound we fix an open subset $U\subset Y_j$ containing $\text{Sing}(Y_j)$ and with $\p U$ smooth. Denote  $\hat U=\pi_j^{-1}(U)$. Its boundary $\p\hat U$ is naturally identified with $\p U$. We can solve the Poisson equation $\Delta_{\hat\omega_{j, l}}\eta_{l}=n$ on $\hat U$ with boundary condition given by $\hat\varphi_j|_{\p U}$. Then by the standard Moser iteration and Cheng-Yau gradient estimate we have $|\nabla_{\hat\omega_{j, l}} \eta_l|\leqslant C$ for all $l$. So passing to a subsequence $\eta_l$ converges uniformly to a limit $\eta$ on  $\pi_\infty^{-1}(U)\subset {Z_j}$, with  $|\nabla_{\hat\omega_j}\eta|\leqslant C$,   $\Delta_{\hat\omega_j}\eta=n$ on $\pi_\infty^{-1}(U)\cap \mathcal R$ and $\eta|_{\p U}=\hat\varphi_j|_{\p U}$.  

Now consider the function $f=\hat\varphi_j-\eta$ on $\pi_\infty^{-1}(U)\cap\mathcal R$. It is harmonic and $f|_{\p U}=0$. One can conclude that $f$ is identically zero from the standard removable singularity theorem for harmonic functions on Ricci limit spaces. For example, using the fact that the Hausdorff codimension of $\mathcal S$ is at least 4, for $\epsilon>0$ small, one can construct (see \cite[Proposition 3.5]{DS1}) a cut-off function $\chi_\epsilon: {Z_j}\rightarrow \mathbb R$ which is equal to 1  when $d_{Z_j}(x, \mathcal S)\geqslant \epsilon$ and vanishes in a neighborhood of $\mathcal S$, such that $\int_{Z_j} d_{Z_j}(\cdot, \mathcal S)^{-1} |\nabla_{\omega_\mathcal R} \chi_\epsilon|\omega_{\mathcal R}^n<\epsilon$. By the local Cheng-Yau gradient estimate we have $|\nabla_{\omega_{\mathcal R}}\hat\varphi_j|\leqslant Cd_{Z_j}(\cdot, \mathcal S)^{-1}$. Then by a straightforward integration by parts argument one sees that $f\equiv0.$
\end{proof}

Given this Lemma, one can follow the strategy of \cite{DS1, DS2} to construct holomorphic functions by the H\"ormander $L^2$ technique (see a related observation in \cite{Gabor2} for studying the asymptotic cone of complete $\sqrt{-1}\p\bp$-exact Calabi-Yau manifolds with Euclidean volume growth).  Here one can work on the trivial holomorphic line bundle $L$ over $Y_j^{\circ}$ endowed with the hermitian metric $|\cdot|_{\hat\varphi_j}\equiv e^{-\hat\varphi_j}$. Notice $Y_j^{\circ}$ admits a complete K\"ahler metric, so one can apply Theorem \ref{Hormander} here. Let $\nabla_L$ be the corresponding Chern connection on $L$. For $k\geqslant 1$, denote by $\mathcal V_k$ the space of holomorphic sections $f$ of $L^k$ over $Y_j^{\circ}$ with $$\|f\|_k^2\equiv \int_{Y_j^{\circ}}|f|^2_{k\hat\varphi_j}(k\hat{\omega}_j)^n<\infty.$$
Since $Y_j$ is normal it is clear that any holomorphic function $f$ on $Y_j^{\circ}$ extends to a holomorphic function on $Y_j$. In particular, for any compact set $K\subset \C^N$,  $|f|$ and $|df|_{\omega_j}$ are uniformly bounded over  $Y_j^{\circ}\cap K$. By \eqref{e:phij uniform bound}, \eqref{e:lower bound} and Lemma \ref{l:potential gradient bound}, we know that $|f|_{k\hat\varphi_j}$ and $|\nabla_L f|_{k\hat\varphi_j}$ are also uniformly bounded over  $Y_j^{\circ}\cap K$. Viewed in ${Z_j}$, it follows that $f$ extends to a holomorphic function on $\mathcal R$ and the above norms are locally bounded near $\mathcal S$.

To apply the idea in \cite{DS1} we need to show that for any compact set $K\subset Y_j$, there exist constants $K_0, K_1>0$ depending on $K$ such that for all $k\geqslant 1$ and any $f\in \mathcal V_k$, we have 
\begin{equation}
    \sup_K|f|_{k\hat\varphi_j}\leqslant K_0 \|f\|_k
,\ \ \ \ \sup_K|\nabla_L f|_{k\hat\varphi_j}\leqslant K_1\|f\|_k.
 \end{equation}
These were proved by applying the Moser iteration to the corresponding differential inequalities  $$\Delta_{k\hat\omega_j}|f|_{k\hat\varphi_j}\geqslant -C|f|_{k\hat\varphi_j}, \ \ \ \ \Delta_{k\hat\omega_j}|\nabla_L f|_{k\hat\varphi_j}\geqslant -C|\nabla_L f|_{k\hat\varphi_j}, $$
which a priori only hold on $\mathcal R$, but again using the existence of a good cut-off function as in \cite[Proposition 3.5]{DS1},  and noticing that there is a Sobolev inequality on ${Z_j}$ (since $Z_j$ is a volume non-collapsing Ricci limit space), one can make the usual arguments go through.
 
Using the existence of  tangent cones at points in ${Z_j}$, it is straightforward to adapt the arguments in \cite{DS1, DS2} to construct holomorphic functions in $\mathcal V_k$ that separate points. Namely, given any $x_1\neq x_2\in {Z_j}$, one can find $k\geqslant 1$ and $f_1, f_2\in \mathcal V_k$ such that 
$  |f_\alpha(x_\alpha)|_{k\hat\varphi_j}\geqslant 1,  $ and  $|f_\alpha(x_\beta)|_{k\hat\varphi_j}\leqslant 1/10$ for $\alpha\neq\beta$.

\begin{proof}[Proof of Proposition \ref{p: metric completion}]
We first prove $\pi_\infty$ is injective (hence is a homeomorphism). Suppose  $x_1\neq x_2\in {Z_j}$. Then we can construct the functions $f_1, f_2$ as above. But both can be viewed as holomorphic functions on $Y_j$, so it follows that $\pi_\infty(x_1)\neq \pi_\infty(x_2)$. 

Since $\text{Sing}(Y_j)$ consists of finitely many points,  we see ${Z_j}\setminus Y_j^{\circ}=\pi_\infty^{-1}(\text{Sing}(Y_j))$ is also a finite set. Clearly this implies that ${Z_j}$ is indeed the metric completion of $(Y_j^{\circ}, \hat\omega_j)$. 
\end{proof}

 Now we further let $j\rightarrow\infty$. Passing to a subsequence we may take a pointed Gromov-Hausdorff limit $(Z_j, d_{Z_j})\rightarrow (Z_\infty, d_\infty)$. The same argument as above combined with the uniform estimates in Section \ref{s5} gives  
 
 \begin{proposition}
 $Z_\infty$ agrees with the metric completion of $(W\setminus \{0\}, \hat\omega_\infty)$ and is naturally homeomorphic to $W$.
 \end{proposition}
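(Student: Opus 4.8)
The plan is to repeat the argument proving Proposition \ref{p: metric completion}, now with $Y_j$ replaced by $W$ and the limit $l\to\infty$ replaced by $j\to\infty$, using that every estimate invoked there is uniform in $j$ by Section \ref{s5} (notably the uniform bound on $A_j$ in Lemma \ref{l:Aj uniform bound}, the uniform Sobolev inequality for the $\hat\omega_{j,\epsilon}$, and the uniform diameter bounds over compact subsets of $\C^N$). First, since the $\hat\omega_{j,\epsilon}$ are uniformly volume non-collapsing --- Bishop--Gromov together with the fact that each is weakly asymptotic to $\mathcal C$ with volume density $\kappa(\mathcal C)$, a property inherited by $\hat\omega_j$ and $\hat\omega_\infty$ --- the general structure theory for non-collapsed Ricci limit spaces gives the decomposition $Z_\infty=\mathcal{R}\cup\mathcal{S}$ with $\mathcal{S}$ closed of Hausdorff codimension $\geqslant 4$ and $\mathcal{R}$ carrying a Calabi--Yau structure $(\omega_{\mathcal{R}},J_{\mathcal{R}})$; the smooth convergence $\hat\omega_j\to\hat\omega_\infty$ on $W\setminus\{0\}$ identifies $(W\setminus\{0\},\hat\omega_\infty)$ with an open subset of $\mathcal{R}$. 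Using $\omega_j\leqslant\mathcal{P}(A_j)\hat\omega_j$ together with the uniform bound on $A_j$, and that near $\text{Sing}(Y_j)$ the metric $\omega_j$ is comparable to the Euclidean one, the coordinate functions on $\C^N$ have uniformly bounded gradient with respect to $\hat\omega_j$ on compact subsets of $\C^N$; hence, after passing to a subsequence, the homeomorphisms $Z_j\cong Y_j\subset\C^N$ converge to a surjective continuous map $\pi_W:Z_\infty\to W\subset\C^N$ extending the identity on $W\setminus\{0\}$ and holomorphic on $\mathcal{R}$, so that $\mathcal{R}\setminus(W\setminus\{0\})=\pi_W^{-1}(0)\cap\mathcal{R}$ is a complex analytic subset, exactly as in the proof of Proposition \ref{p: metric completion}.

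Next I would prove the analogue of Lemma \ref{l:potential gradient bound}. Writing $\hat\omega_\infty=\sqrt{-1}\p\bp\hat\varphi_\infty$ on $W\setminus\{0\}$ with $\hat\varphi_\infty=\psi_\infty+u_\infty+\varphi_\infty$ locally bounded across $0$, pluripolarity of the proper analytic subset $\pi_W^{-1}(0)\cap\mathcal{R}$ and the removable singularity theorem for plurisubharmonic functions give $\sqrt{-1}\p\bp\hat\varphi_\infty=\omega_{\mathcal{R}}$ on all of $\mathcal{R}$ as currents, so $\hat\varphi_\infty$ extends smoothly to $\mathcal{R}$ by elliptic regularity. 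For the gradient bound on compact subsets of $Z_\infty$, fix a Euclidean ball $U\ni 0$ in $\C^N$, so that $U\cap Y_j\supset\text{Sing}(Y_j)$ for $j$ large, solve $\Delta_{\hat\omega_{j,l}}\eta_{j,l}=n$ on $\pi_j^{-1}(U\cap Y_j)$ with boundary data $\hat\varphi_j|_{\p U}$, obtain a uniform gradient bound by Moser iteration and the Cheng--Yau estimate (via the uniform Sobolev inequality), pass to the double limit $l\to\infty$ then $j\to\infty$ to produce $\eta$ on $\pi_W^{-1}(U)$ with $|\nabla_{\hat\omega_\infty}\eta|$ bounded, $\Delta_{\hat\omega_\infty}\eta=n$ on $\pi_W^{-1}(U)\cap\mathcal{R}$ and $\eta|_{\p U}=\hat\varphi_\infty|_{\p U}$, and finally conclude $\hat\varphi_\infty-\eta\equiv 0$ by the removable singularity theorem for harmonic functions on Ricci limit spaces, using a good cut-off along $\mathcal{S}$ as in \cite[Proposition 3.5]{DS1} and the Cheng--Yau bound $|\nabla_{\omega_{\mathcal{R}}}\hat\varphi_\infty|\leqslant C\, d_{Z_\infty}(\cdot,\mathcal{S})^{-1}$.

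Granted this, the H\"ormander $L^2$ construction of Section \ref{s6} goes through on the trivial line bundle over $W\setminus\{0\}$ with Hermitian metric $e^{-\hat\varphi_\infty}$: the uniform Sobolev inequality and good cut-offs on $Z_\infty$ make the Moser iteration for $|f|_{k\hat\varphi_\infty}$ and $|\nabla_L f|_{k\hat\varphi_\infty}$ work, and since $W$ is normal with $\text{Sing}(W)=\{0\}$, every $f\in\mathcal{V}_k$ is the restriction of a holomorphic function on $W$, which extends holomorphically to $\mathcal{R}$ with norms locally bounded near $\mathcal{S}$; using tangent cones at points of $Z_\infty$ one separates any $x_1\neq x_2\in Z_\infty$ by $f_1,f_2\in\mathcal{V}_k$. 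Viewing these as functions on $W$ forces $\pi_W(x_1)\neq\pi_W(x_2)$, so $\pi_W$ is injective, hence a homeomorphism $Z_\infty\cong W$; then $\pi_W^{-1}(0)$ is a single point, so $Z_\infty$ is precisely the metric completion of $(W\setminus\{0\},\hat\omega_\infty)$. The main obstacle I expect is the gradient estimate of the second paragraph: one must propagate the bound on the potential $\hat\varphi_j$ uniformly through both limits while simultaneously controlling the a priori mysterious part $\mathcal{R}\setminus(W\setminus\{0\})$ of the regular set, and it is precisely here that the uniform-in-$j$ non-collapsing, Sobolev and diameter bounds established in Section \ref{s5} are indispensable.
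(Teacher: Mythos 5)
Your proposal is correct and is exactly what the paper intends: the paper's own "proof" of this proposition consists of the single sentence "The same argument as above combined with the uniform estimates in Section~\ref{s5} gives...", and your write-up is a faithful and careful unpacking of that. You correctly identify the key diagonalization (realizing $Z_\infty$ as a limit of smooth $(\hat Y_j, \hat\omega_{j,l_j})$ so that the non-collapsed Ricci limit structure theory applies), note that $\hat\varphi_\infty = \psi_\infty + u_\infty + \varphi_\infty$ is locally bounded across $0$ by the estimates assembled at the end of Section~\ref{s5}, observe that the uniform bound on $A_j$ from Lemma~\ref{l:Aj uniform bound} replaces the fixed-$j$ dependence in Lemma~\ref{l:potential gradient bound}, and correctly flag the uniform propagation of the gradient estimate through the double limit as the one place where the uniform Sobolev, non-collapsing and compact-diameter bounds of Section~\ref{s5} are genuinely needed.
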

In particular,  we may naturally identify $Z_\infty$ with $W$. 

\begin{proof}[Proof of  Theorem \ref{t:main0}] As is proved in Section \ref{s5}, $Z_\infty$ has a unique asymptotic cone given by $\mathcal C$.
From the above discussion it is straightforward to extend the arguments of \cite{DS2}  to show that there is a unique metric tangent cone $\mathcal C'$ at $0$.  Furthermore, by the minimizing property of K-semistable valuations proved by Li-Xu \cite{LX}, we know the volume densities satisfy $\kappa(\mathcal C')\leqslant \kappa(\mathcal C)$, where the right hand side can be interpreted as the normalized volume associated to the natural valuation of $W$ at $0$.
Notice $Z_\infty$ is a pointed Gromov-Hausdorff limit of complete Ricci-flat manifolds, so the Bishop-Gromov inequality applies to $Z_\infty$, which gives that $\kappa(\mathcal C')\geqslant \kappa(\mathcal C)$. Hence the equality holds and then  $Z_\infty$ must be a K\"ahler cone itself, hence is isomorphic to $\mathcal C$ as a K\"ahler cone.  Denote by $\xi'$ the Reeb vector field on $Z_\infty$ and by $\tilde r$ the radial function on $Z_\infty$.

It remains to show that $\xi'=\xi$. For this  we need to use some results in \cite{DS2}. The algebraic structure of $Z_\infty$ can be intrinsically characterized in terms of the ring $R(Z_\infty)$ holomorphic functions on $Z_\infty$ of polynomial growth (measured using the K\"ahler cone metric). The cone structure yields a decomposition \begin{equation}\label{e:coordinate ring decomposition}R(Z_\infty)=\bigoplus_{\mu\geqslant0} R_\mu(Z_\infty),
\end{equation}where $R_\mu(Z_\infty)$ consists of $f$ which are of homogeneous of degree $\mu$, i.e., $\mathcal L_{J\xi'}f=-\mu f$.  Conversely, $\xi'$ can be recovered from this decomposition.

The above decomposition is determined by a filtration associated with a \emph{degree} function. 
For any nonzero $f\in R(Z_\infty)$, the following is well-defined (see \cite{DS2})
$$d_{Z_\infty}(f)\equiv\lim_{R\rightarrow\infty}\frac{\sup_{\tilde r(x)=R}\log |f(x)|}{\log R}<\infty.$$
Then the decomposition in \eqref{e:coordinate ring decomposition} is determined by the filtration of $R(W)$ with respect to $d_{Z_\infty}$.

 On the other hand,   the coordinate ring $R(W)$ of $W$ consists of restrictions of polynomial functions on $\C^N$. We define
 $$d_{W}(f)\equiv\lim_{R\rightarrow\infty}\frac{\sup_{r_\xi(x)=R}\log |f(x)|}{\log R}.$$
 There is  a decomposition $R(W)=\bigoplus_{\mu\geqslant 0} R_\mu(W)$, where $R_\mu(W)$ is the space of polynomials $f$ with $\mathcal L_{J\xi}f=-\mu f$. It can be checked that this decomposition is also determined by the filtration of $R(W)$
 with respect to $d_W$.
 
Notice that by the construction of $\hat\omega_\infty$  we know for all $\epsilon>0$ ,$C_\epsilon^{-1}r_\xi^{1-\epsilon}\leqslant \tilde r\leqslant C_\epsilon r_\xi^{1+\epsilon}$ on $W\setminus K$ for a fixed compact set $K$. It follows that $R(Z_\infty)=R(W)$ and $d_W(f)=d_{Z_\infty}(f)$ for any $f\in R(W)$. So we must have $R_\mu(Z_\infty)=R_\mu(W)$ for all $\mu\geqslant 0$. 
 In particular, $\xi'=\xi$. Hence we have constructed a $T_\xi$ equivariant isomorphism between $W$ and $\mathcal C$. This completes the proof of Theorem \ref{t:main0}. 

\end{proof}

\begin{proof}[Proof of Theorem \ref{t:main2}] We simply notice that by the estimate of $\varphi_\infty$ and $u_{\infty}$ in \eqref{estimate for u and phi}, the background metric $\omega_\infty$ is polynomially asymptotic to the Calabi-Yau cone metric $\hat \omega_{\infty}$ on $W$  (which is the same as $\mathcal C$). Moreover by the construction of $\omega_{\infty}$ (see Corollary \ref{c:metric close} and Lemma \ref{construction of exact background kahler forms}), it is polynomially asymptotic to $\omega$ under the diffeomorphism $p_{\infty}$ . Then $p_\infty$ provides the diffeomorphism that shows $\omega$ and $J$ are asymptotic to $\hat\omega_\infty$ and $J_W$ in a polynomial rate. The asymptotics of $\Omega$ follows from \eqref{fix angle} by letting $j\rightarrow \infty$.

\end{proof}

\begin{remark}Using the same proof one can weaken the Calabi-Yau condition in Theorem \ref{t:main0} and \ref{t:main2} to be Ricci-flat K\"ahler. Alternatively, one can show this by working on the universal cover since it is well known \cite{Anderson, PLi} that Riemannian manifolds with non-negative Ricci curvature and Euclidean volume growth have finite fundamental groups. 
\end{remark}

\section{Discussions}

We point out some further related directions that  one can explore.  The guiding problem is

\begin{problem}[Algebraization for complete Calabi-Yau metrics]\label{q6.1}
	Give an algebro-geometric characterization of all complete Calabi-Yau metrics with Euclidean volume growth. 
\end{problem}
\begin{remark}
Without the volume growth condition, the situation is more complicated and extra assumptions are needed in order to make connections with algebraic geometry. In complex dimension 2 under a natural finite energy condition, the models at infinity are completely classified by Sun-Zhang \cite{SZ}, and there is a complete classification of the Calabi-Yau metrics in terms of algebro-geometric data by the work of many people; see \cite{SZ} for references. 
\end{remark}

Given Theorem \ref{t:main2}, the results of Conlon-Hein \cite{CH3} provide an answer to Problem \ref{q6.1} under the extra quadractic curvature decay  condition. Namely, such Calabi-Yau metrics are always constructed as follows (ignoring uniqueness issues for the moment), where each step is essentially algebro-geometric. 
\begin{enumerate}
	\item Choose a Calabi-Yau cone $(\mathcal C, \xi)$. This is equivalent to choosing a K-polystable Fano cone \cite{CS}.
	\item Choose a nomal affine variety $X'$ with $(\mathcal C, \xi)$ as a weighted asymptotic cone at infinity.
	\item Choose a crepant K\"ahler resolution $\pi:X\rightarrow X'$. 
	\item Choose a K\"ahler class on $X$. This is a numerical condition according to \cite{CoTo,CH3}.  
\end{enumerate}

We want to ask how much of the above picture holds in general, if we allow  singularities in the relevant objects. There are several technical obstacles to realize this program, mostly related to the possible appearance of singularities on the asymptotic cone.
First one needs to extend the 2-step degeneration theory of \cite{DS2} to this setting. In particular, we would like to construct abundant holomorphic functions with polynomial growth on $X$. This is related to a special case of Yau's compactification conjecture, which we reiterate as follows
\begin{conjecture}[Yau's Compactification conjecture] \label{Yau conjecture}
    A complete Calabi-Yau manifold with Euclidean volume growth is naturally a quasi-projective variety.
\end{conjecture}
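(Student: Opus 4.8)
The plan is to adapt the complex-geometric strategy behind the 2-step degeneration theory of \cite{DS2} (with the polarization hypothesis removed as in \cite{Liu}) to the general, possibly singular, situation, the point being to manufacture enough holomorphic functions of polynomial growth on $X$ to realize it as a resolution of a normal affine variety, after which quasi-projectivity is formal.

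\textbf{Step 1 (tangent cones at infinity).} First I would fix a tangent cone at infinity $\mathcal{C}$ of $(X,g)$. By the Cheeger--Colding theory, Euclidean volume growth forces every such blow-down limit to be a non-collapsed metric cone $C(L)$, with closed singular set $\mathrm{Sing}(\mathcal{C})$ of Hausdorff codimension $\geq 4$; the regular part carries a Ricci-flat K\"ahler structure, and the limiting complex structure produces a Reeb field $\xi = J_{\mathcal{C}}(r\partial_r)$. The analytic substitutes we need on $X$ itself — a uniform Sobolev inequality and good cutoff functions adapted to the singular strata (as in \cite[Proposition 3.5]{DS1}) — are available from $\Ric \geq 0$ together with volume non-collapsing, so the $L^2$-estimate (Theorem~\ref{Hormander}) can be run on $X$, on the regular part of $\mathcal{C}$, and on intermediate annular regions.

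\textbf{Step 2 (transplanting holomorphic functions).} Next I would produce holomorphic functions of polynomial growth on $X$ by transplantation from $\mathcal{C}$: starting from homogeneous holomorphic functions on the regular part of $\mathcal{C}$ (from the affine cone structure of the cross section, or directly by $L^2$ methods with a weight of the shape $\varphi = (n+\mathrm{const})\log(\rho^2+1)$ as in Lemma~\ref{psh function on a kahler cone}), pull them back along a Gromov--Hausdorff approximation of a large annulus of $X$ by an annulus of $\mathcal{C}$, kill the small $\bar\partial$-error by an $L^2$-correction, and use Moser iteration with a Cheng--Yau type gradient estimate to control the sup-norm and to pin down the polynomial growth rate — exactly the mechanism exploited throughout the present paper. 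What is genuinely new is that without a curvature decay hypothesis the approximation degenerates near $\mathrm{Sing}(\mathcal{C})$, so one must argue on the regular part and then extend the constructed functions across $\mathrm{Sing}(\mathcal{C})$, which is legitimate since that set is pluripolar of large codimension.

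\textbf{Step 3 (finite generation and quasi-projectivity).} Having produced functions in the ring $R(X)$ of polynomially growing holomorphic functions that separate points and tangent directions, I would show $R(X)$ is finitely generated, that $X' := \mathrm{Spec}\,R(X)$ is a normal affine variety, and that the natural map $X \to X'$ is a proper (crepant) resolution; then $X'$ is affine, hence quasi-projective, and $X$, being projective over $X'$, is quasi-projective. The hard part will be Step 2 together with this finite-generation statement: when $\mathcal{C}$ is singular one no longer knows a priori that the asymptotic cone is unique \emph{as a Calabi--Yau cone}, nor that the degree function and filtration of \cite{DS2} behave well, so both the abundance of holomorphic functions and the finiteness of $R(X)$ rest on first upgrading the Donaldson--Sun uniqueness and regularity theory to allow singular tangent cones at infinity — precisely the technical obstacle flagged in the discussion above.
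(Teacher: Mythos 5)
The statement you are trying to prove is labeled a \emph{conjecture} in the paper (Yau's compactification conjecture), and the paper offers no proof of it; it is presented as an open problem, with only partial results cited: Liu--Sz\'ekelyhidi show any asymptotic cone is a normal affine variety, and Sz\'ekelyhidi observed that the $\partial\bar\partial$-exact case follows from the Donaldson--Sun machinery. Your ``proposal'' is therefore not a proof to be checked against the paper's proof, because no such proof exists.

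Reading your three steps, you have in fact written a description of the known strategy together with an honest acknowledgment that the key steps are open, not a proof. The genuine gaps you would need to fill are exactly the ones you flag in Step 3: (i) when the asymptotic cone $\mathcal{C}$ has non-isolated singularities, one does not know how to produce enough polynomially growing holomorphic functions on $X$ to separate points and tangent directions — the $L^2$-transplantation in Step 2 requires quantitative control of the Gromov--Hausdorff approximation near $\mathrm{Sing}(\mathcal{C})$ that is not available without a curvature decay hypothesis; (ii) even granting abundance of such functions, the finite generation of $R(X)$ and the properness of the map $X \to \mathrm{Spec}\,R(X)$ rest on extending the Donaldson--Sun uniqueness and filtration theory to singular tangent cones at infinity, which is unresolved; and (iii) the uniqueness of the asymptotic cone \emph{as a complex cone} is itself not known in this generality. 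Until these are resolved, ``kill the small $\bar\partial$-error'' and ``extend across $\mathrm{Sing}(\mathcal{C})$'' are hopes rather than lemmas. In short, you have correctly identified the structure of the problem and the likely line of attack, but what you wrote is a research outline, not a proof, and the central difficulty remains exactly where you say it is.
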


There has been partial progress towards this. For example, it is known that any asymptotic cone is naturally a normal affine variety \cite{LS}. It is also observed in \cite{Gabor2} that the results of \cite{DS2} automatically extend to the setting when the Calabi-Yau metric is $\p\bp$-exact  so Conjecture \ref{Yau conjecture} holds in this case. Now suppose the 2-step degeneration theory works. The following is natural to expect
 
\begin{conjecture}[No semistability at infinity, the general version]\label{conj6.2}
	Theorem \ref{t:main0} holds without the  quadratic curvature decay assumption. 
\end{conjecture}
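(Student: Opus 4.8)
The plan is to realize the intermediate cone $W$ itself as the underlying space of a singular Calabi-Yau metric whose asymptotic cone at infinity is $\mathcal{C}$, and then to run a Bishop-Gromov rigidity argument. Concretely, I would aim to produce a Ricci-flat Kähler metric $\omega_W$ on $W\setminus\{0\}$ such that (i) the tangent cone of $\omega_W$ at infinity is $\mathcal{C}$, and (ii) the metric completion of $(W\setminus\{0\},\omega_W)$ is homeomorphic to $W$ and arises as a pointed Gromov-Hausdorff limit of complete Calabi-Yau manifolds with Euclidean volume growth. Granting this, set $\nu(R)=\Vol(B(0,R))/R^{2n}$ on $(W,\omega_W)$; by Bishop-Gromov $\nu$ is non-increasing, $\nu(R)\to\kappa(\mathcal{C})$ as $R\to\infty$, and $\nu(R)$ tends as $R\to 0$ to the volume density of the metric tangent cone at $0$, which by the minimizing property of K-semistable valuations (Li-Xu) is $\le\kappa(\mathcal{C})$. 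Hence $\nu$ is constant, so $(W,\omega_W)$ is a metric cone, and one identifies it metrically and then holomorphically with $\mathcal{C}$; comparing the degree filtrations on the coordinate rings $R(W)$ and $R(\mathcal{C})$ (which coincide once $\tilde r$ and $r_\xi$ are polynomially comparable) then upgrades this to a $T_\xi$-equivariant isomorphism of normal affine cones.

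The construction of $\omega_W$ is the heart of the matter, and here I would use the Tian-Yau ansatz in reverse: instead of starting from a known lower-dimensional Kähler-Einstein metric, I would graft the \emph{unknown} Calabi-Yau metric $\omega$ on $X$ onto $W$ using the algebraic 2-step degeneration. Since $W=\lim_{j\to\infty}Y_j$ in $\mathbb{C}^N$ with $Y_j=\Lambda^{j-1}.X'$ and each $Y_j$ admits a crepant resolution $\hat Y_j\cong X$, the strategy is: first build, for each $j$ and each $\epsilon>0$ measuring the size of the exceptional set, a background Kähler form $\widetilde\omega_{j,\epsilon}$ on $\hat Y_j$ that is slowly (sub-polynomially) asymptotic to $\mathcal{C}$ and solves the complex Monge-Ampère equation with polynomially decaying error; then solve the Monge-Ampère equation exactly via Tian-Yau to get Calabi-Yau metrics $\hat\omega_{j,\epsilon}$, derive \emph{uniform} estimates, and take the double limit $\epsilon\to0$ followed by $j\to\infty$. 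The first limit follows the line of Collins-Guo-Tong; the uniform $C^0$ bound comes from Moser iteration using the uniform Sobolev inequality guaranteed by volume non-collapsing; the Laplacian and gradient estimates come from Chern-Lu together with a maximum principle using $r_j^{-2\delta}$ as a barrier. The crucial point making the Monge-Ampère error polynomially small is that the comparison between $\omega$ and the model cone metric $\omega_\xi$ on $\mathbb{C}^N$ loses only an arbitrarily small polynomial order, while the algebraicity of $W$ yields a \emph{definite} rate $\delta_0>0$ for the closeness of $Y_j$ to $W$, which beats that loss.

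To see that the double limit really reconstructs $W$ as a topological space (rather than a collapsed or pinched quotient), I would adapt the Donaldson-Sun $L^2$ method: on $(Y_j^\circ,\hat\omega_j)$ one produces, via Hörmander estimates on the trivial line bundle with weight $e^{-\hat\varphi_j}$, holomorphic sections with uniform sup- and gradient-bounds—using a gradient bound for the Kähler potential $\hat\varphi_j$ across the Ricci-limit singular set, proved by a removable-singularity argument with a good cut-off function of the Donaldson-Sun type—enough to separate points of the Gromov-Hausdorff limit. Since these are restrictions of holomorphic functions on the normal affine variety $Y_j$, the limit is homeomorphic to $Y_j$, and letting $j\to\infty$ gives the same for $W$; the resulting $\hat\omega_\infty$ is the desired $\omega_W$. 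Once this is in place, extending the 2-step degeneration analysis of Donaldson-Sun to the point $0\in W$ gives a unique metric tangent cone there, and the Bishop-Gromov argument above applies.

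I expect the main obstacle to be obtaining the \emph{uniform} estimates through the double limit—in particular (a) making the background metrics $\widetilde\omega_{j,\epsilon}$ uniformly asymptotic to $\mathcal{C}$ with a Monge-Ampère error decaying polynomially with constants independent of $j$ and $\epsilon$, which forces one to track carefully how the accumulated error in $F_j\circ\cdots\circ F_2$ deviates from $\Lambda^{j-1}$ and to exploit the definite rate $\delta_0$; and (b) the uniform $C^0$ estimate for the Monge-Ampère solution, which rests on a uniform Sobolev constant and a uniform bound for the integral norm of the Ricci potential. The latter needs a uniform upper bound for $\omega_0^n/((\sqrt{-1})^{n^2}\Omega_j\wedge\overline\Omega_j)$ on compact sets, which I would obtain from the plurisubharmonicity of this quotient coming from the Gauss-Codazzi equation together with the maximum principle on $\hat Y_j$.
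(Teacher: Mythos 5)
The statement you are addressing is a \emph{conjecture}, not a theorem: the paper states Conjecture~\ref{conj6.2} as an open problem and explicitly does not prove it. Your proposal essentially reproduces the paper's proof strategy for Theorem~\ref{t:main0}, which is proved \emph{under} the quadratic curvature decay assumption. The conjecture asks whether the conclusion holds \emph{without} that assumption, and your outline does not address any of the genuine new obstacles that arise when it is dropped.

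Concretely, here is what breaks. First, the whole algebro-geometric input you rely on --- that $R(X)$ is finitely generated, that $X$ is quasi-projective, that there is a 2-step degeneration $X' \rightsquigarrow W \rightsquigarrow \mathcal{C}$ with a polynomial weight $\Lambda$ and the maps $F_j \to \Lambda$ --- comes from \cite{DS2, Liu}, and Liu's removal of the polarized-limit hypothesis requires precisely the quadratic curvature decay assumption. Without it one does not even know that $X$ is a quasi-projective variety (this is Yau's compactification conjecture, stated in the paper as Conjecture~\ref{Yau conjecture}), so the objects $X'$, $Y_j$, $W$, $\Lambda$, $\delta_0$ that your construction manipulates are not known to exist. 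Second, once quadratic curvature decay is dropped, the asymptotic cone $\mathcal{C}$ may have a singular cross-section (non-isolated singularities). Then Lemma~\ref{l:asymptotic decay} (the weak conical approximation by diffeomorphisms) fails as stated, the weighted elliptic theory of Propositions~\ref{p:weighted-esimate-quotient-space}--\ref{c:elliptic regularity} no longer applies on the end, and the Tian-Yau/Conlon-Hein/Collins-Guo-Tong machinery you invoke to solve the Monge-Amp\`ere equation on $\hat Y_j$ uniformly is not available for a singular model cone. The paper explicitly identifies this as the central difficulty and poses Question~\ref{q6.3} (prescribing a singular asymptotic cone) as a prerequisite first step. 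Your proposal assumes all of this technology carries over, which is exactly the content one would have to prove; as written it is a restatement of the proof of Theorem~\ref{t:main0}, not an argument for the general conjecture.
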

\begin{remark}
One can also formulate a conjectural general version of Theorem \ref{t:main2}, where one uses the Gromov-Hausdorff distance to measure the rate of convergence to the asymptotic cone. 
\end{remark}

One difficulty in proving Conjecture \ref{conj6.2} using the strategy of this paper is related  to the generalization of the Tian-Yau construction, which is itself an interesting question. A first step would be 

\begin{question}[Prescribing asymptotic cone]\label{q6.3}
	Given a singular Calabi-Yau cone $(\mathcal C, \xi)$ and a normal affine variety $X'$ with $(\mathcal C, \xi)$ as a weighted asymptotic cone at infinity, when does $X'$ admit a (possibly singular) Calabi-Yau metric asymptotic to $(\mathcal C, \xi)$ in the Gromov-Hausdorff sense?
\end{question}

There are some recent extensions of the Tian-Yau construction with singular asymptotic cones for special examples; see \cite{HN, YLi, Gabor1, CR, Chiu}. 
\

As mentioned in Section \ref{two step degeneration} the classification of complete Calabi-Yau metrics with Euclidean volume growth on a fixed underlying algebraic variety is a subtle problem. Even on $\C^n(n\geqslant 3)$ the situation can be complicated; the recent works by Sz\'ekelyhidi \cite{Gabor2} and Chiu \cite{Chiu} make progress in this direction.  We make an attempt here to formulate some general questions.  To minimize technical issues  we restrict to the case of $\p\bp$-exact metrics on smooth affine varieties. As mentioned above the results of \cite{DS2} apply here. Fix a smooth affine variety $X$ with the coordinate ring $R(X)$.  A complete $\p\bp$-exact Calabi-Yau metric $\omega$ on $X$ with Euclidean volume growth is said to be a \emph{compatible} metric if the space of holomorphic functions on $X$ with polynomial growth with respect to $\omega$ coincides with $R(X)$.
A compatible Calabi-Yau metric $\omega$ defines a degree function $d_\omega:R(X)\rightarrow \mathbb R_{\geq0}\cup \{\infty\}$ satisfying
\begin{itemize}
\item $d_\omega(f)=\infty$ if and only if $f=0$;
 \item $d_\omega(f)=0$ if and only if $f$ is a nonzero constant;
    \item $d_\omega(fg)=d_\omega(f)+d_\omega(g)$;
    \item $d_\omega(f+g)\leqslant \max(d_\omega(f), d_\omega(g))$. 
\end{itemize}
Furthermore, $d_\omega$  gives rise to a filtration of $R(X)$ whose associated graded ring is the coordinate ring $R(W)$ of the intermediate K-semistable cone $W$, and $W$ degenerates to the unique asymptotic cone $\mathcal C$ as affine algebraic cones. The result of Li-Wang-Xu \cite{LWX} implies that $(\mathcal C, \xi)$ is uniquely determined by $d_\omega$.
Clearly, if $\omega$ is compatible, then for any $\lambda>0$ and any algebraic automorphism $F$ of $X$, $\lambda F^*\omega$ is also compatible. Furthermore,  $d_\omega=d_{\lambda\omega}$ and  $d_{F^*\omega}=F^*d_{\omega}$.

In terms of algebro-geometric language one notices that $v_\omega\equiv-d_\omega$ is a valuation on $R(X)$ with values in $\mathbb R_{\leqslant0}\cup\{-\infty\}$. We call such a valuation a \emph{negative valuation}. We say a negative valuation $v$ is \emph{semistable} (resp. \emph{polystable}) if it defines a filtration whose associated graded ring is finitely generated and defines a K-semistable  (resp. K-polystable) Fano cone $\mathcal C_v$ (in the sense of \cite{CS}). Geometrically, the cone is a weighted asymptotic cone at infinity of $X$ under some affine embedding. Conjecture \ref{conj6.2} implies that the negative valuation induced by a compatible Calabi-Yau metric on $X$ is always polystable. 

Given a semistable valuation $v$ on $X$, we define $G_v$ as the group of algebraic automorphisms of $X$ that preserve $v$ and define $\tilde G_v$ as the group of algebraic automorphisms of $\mathcal C_v$ that fixes the vertex and preserves the cone structure. Both groups are finite-dimensional. Notice that $\tilde G_v$ contains the algebraic torus $\mathbb T_v$ generated by the cone vector field. There is a natural homomorphism $\varphi: G_v\rightarrow \tilde G_v$. 

\
\

To classify the space of all compatible Calabi-Yau metrics on $X$, assuming Conjecture \ref{conj6.2}, one can proceed as follows:

\

\textbf{Step 1}. Determine the space $\mathcal V$ of all possible polystable negative valuations on $X$ up to algebraic automorphism.  This is essentially an algebro-geometric question. One can also study the related question with polystability replaced by semistability. In general there may be infinitely many polystable  negative valuations on a given $X$. Indeed,  given any polystable cone $V$ in $\C^n$ which is a hypersurface,  one can find a polystable  negative valuation $v$ on $X=\C^n$ with $\mathcal C_v=\C\times V$. Indeed,  the result of Sz\'ekelyhidi \cite{Gabor1} implies that if $V$ has only isolated singularities, then the associated negative valuation actually arises  from a compatible Calabi-Yau metric on $\C^n$. Still, it seems interesting to investigate the structure of the space $\mathcal V$.

\

\textbf{Step 2}. Now we fix  a polystable negative valuation $v$ on $X$. Denote by $\mathcal M_v$ the space of all compatible Calabi-Yau metrics $\omega$ on $X$ with $v=v_\omega$.  Is $\mathcal M_v$ nonempty?
This is related to Question \ref{q6.3}.

\

\textbf{Step 3}. Suppose $\mathcal M_v\neq \emptyset$. We know that all $\omega\in \mathcal M_v$ have the same asymptotic cone at infinity given by $\mathcal C_v$.  But we should expect more and one would like to have

\begin{conjecture}[Uniform equivalence]\label{conj6.5}
    There exists a constant $C>0$ such that for all $\omega_1, \omega_2\in \mathcal M_v$, we have $C^{-1}\omega_1\leqslant \omega_2\leqslant C\omega_1$. 
\end{conjecture}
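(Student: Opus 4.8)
The approach is to reduce Conjecture~\ref{conj6.5} to a Chern--Lu type a priori estimate played off the asymptotic cone, applied directly to the pair $(\omega_1,\omega_2)$. The first step is to collect the structural inputs. Both $\omega_1$ and $\omega_2$ are complete Ricci-flat K\"ahler metrics on the fixed affine variety $X$, and since they induce the same degree function $v$, the $2$-step degeneration theory -- available in the $\partial\bar\partial$-exact setting after \cite{Gabor2} -- produces, for each of them, a Calabi--Yau cone at infinity intrinsically determined by $v$; normalizing the scale (this is the one role of the $\mathbb R_{>0}$-rescaling, under which $\mathcal M_v$ is invariant, so that the statement is naturally read modulo it) we may assume both are asymptotic to one and the same cone $\mathcal C_v$, and, by the rigidity used in the proof of Lemma~\ref{l:asymptotic decay}, the two conical identifications differ at infinity by an element of the compact group $\Isom_O(\mathcal C_v)$ of holomorphic isometries fixing the vertex. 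In particular $r_{\omega_1}\simeq r_{\omega_2}$ and $\operatorname{tr}_{\omega_2}\omega_1\to n$ at infinity. Finally, since both metrics are non-collapsed, Ricci-flat and Gromov--Hausdorff asymptotic to a metric cone, $\epsilon$-regularity together with elliptic estimates gives a curvature bound $|\Rm_{\omega_i}|\leqslant B(r)$ with $B(r)\to0$; the decay rate of $B$ is polynomial if one assumes quadratic curvature decay and otherwise only logarithmic via \cite{CM}.

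With these in place, the Chern--Lu inequality (using that $\omega_2$ is Ricci-flat) reads $\Delta_{\omega_2}\log\operatorname{tr}_{\omega_2}\omega_1\geqslant -B(r)\,\operatorname{tr}_{\omega_2}\omega_1$ on $X$. One would like to run the maximum principle on $\log\operatorname{tr}_{\omega_2}\omega_1-\Phi$ for a suitable barrier $\Phi$ with $\sqrt{-1}\partial\bar\partial\Phi\leqslant -B(r)\omega_1$ near infinity: then this combination is $\omega_2$-subharmonic on the end, it is bounded above at infinity because $\operatorname{tr}_{\omega_2}\omega_1\to n$ there, and the maximum over $X$ is therefore controlled either by $\|\Phi\|_{L^\infty}$ or by $\sup_{\{r_{\omega_1}\leqslant R_0\}}\operatorname{tr}_{\omega_2}\omega_1$ over a fixed compact core -- the latter finite since both metrics are smooth and complete. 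By symmetry this bounds $\operatorname{tr}_{\omega_1}\omega_2$ as well, giving $C^{-1}\omega_1\leqslant\omega_2\leqslant C\omega_1$ for some finite $C$ depending on the pair. One cannot replace $\Phi$ here by the relative potential $\phi$ (with $\omega_2=\omega_1+\sqrt{-1}\partial\bar\partial\phi$, which after the scale normalization solves the homogeneous complex Monge--Amp\`ere equation): concavity of $\log\det$ makes $\phi$ both $\omega_1$-subharmonic and $\omega_2$-superharmonic, so by Yau's Liouville theorem $\phi$ is forced to be unbounded -- both above and below -- as soon as $\omega_1\neq\omega_2$, which is exactly why the barrier $\Phi$, hence some definite control at infinity, is needed.

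The principal difficulty has two faces. The first, more technical one is the construction of the barrier: a bounded $\Phi$ with $\sqrt{-1}\partial\bar\partial\Phi\leqslant -B(r)\omega_1$ at infinity exists only when $B(r)$ decays sufficiently fast (faster than $r^{-2}$ up to logarithms), so with only the logarithmic convergence rate of \cite{CM} this step fails and one genuinely needs an improved -- polynomial -- rate of convergence of $\omega_i$ to $\mathcal C_v$, of the kind provided by Theorem~\ref{t:main2} in the quadratic-curvature-decay case. Establishing such a rate in the general $\partial\bar\partial$-exact setting (equivalently, upgrading the \L ojasiewicz exponent behind \cite{CM} to one giving polynomial decay) is itself a substantial problem, and without it the Chern--Lu route must be replaced by a two-sided barrier construction or by a uniqueness-theorem argument in the spirit of Conlon--Hein \cite{CH3}, whose extension to singular asymptotic cones is not available.

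The second, and in my view the truly hard, face is passing from the pair-dependent constant $C$ to one uniform over all of $\mathcal M_v$. Every ingredient above -- the curvature bound $B(r)$ for $\omega_1$, the $L^\infty$ norm of the barrier $\Phi$, and the bound on $\operatorname{tr}_{\omega_2}\omega_1$ over the compact core -- needs to be uniform over $\mathcal M_v$ modulo the $\mathbb R_{>0}\times G_v$-action, which is precisely the assertion that $\mathcal M_v/(\mathbb R_{>0}\times G_v)$ is compact. This cannot be read off the local PDE analysis and should come instead from a blow-up/contradiction argument -- rescale a hypothetical sequence in $\mathcal M_v$ along which the curvature blows up, extract a non-flat Ricci-flat bubble limit, and contradict the fixed asymptotic cone -- combined with properness of the $G_v$-action on $\mathcal M_v$; both of these are delicate, and both are inseparable from the (open) question of whether $\mathcal M_v$ even carries a good finite-dimensional moduli structure in this generality.
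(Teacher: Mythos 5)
First, be aware that the statement you are proving is left as a \emph{conjecture} in the paper: the authors do not prove it in general, and they establish it only under the additional hypothesis that $\mathcal C_v$ has an isolated singularity at the vertex. In that case their route is quite different from yours: quadratic curvature decay holds automatically, so Theorem \ref{t:main2} applies to each $\omega\in\mathcal M_v$ and produces a Calabi--Yau cone metric $\mathfrak C(\omega)\in\mathcal N_v$ to which $\omega$ is \emph{polynomially} asymptotic under the diffeomorphism fixed by the valuation (Section \ref{section:defining projection map}); pairwise uniform equivalence of $\omega_1,\omega_2$ then follows because any two cone metrics in $\mathcal N_v$ (same Reeb field, compact link) are uniformly equivalent, with no Chern--Lu argument on the pair $(\omega_1,\omega_2)$ needed. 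Your direct Chern--Lu-plus-barrier scheme is a legitimate alternative in that regime, and you have correctly located the obstruction in the general case: without a polynomial convergence rate --- which in the $\p\bp$-exact setting with singular cones is precisely the open content of Conjecture \ref{conj6.2} and the general version of Theorem \ref{t:main2} --- the barrier $\Phi$ cannot be built from the logarithmic rate of \cite{CM}, and the argument stalls exactly where the paper's does. So the proposal is an honest attack outline, not a proof, and that is consistent with the status of the statement.

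There is, however, one concrete error in your setup. You assert that the two conical identifications of the ends of $(X,\omega_1)$ and $(X,\omega_2)$ differ by an element of the \emph{compact} group of holomorphic isometries of $\mathcal C_v$ fixing the vertex, whence $\operatorname{tr}_{\omega_2}\omega_1\to n$. This is not what the theory gives: the rigidity in Lemma \ref{l:asymptotic decay} compares a single metric with its own asymptotic cone, whereas two different elements of $\mathcal M_v$ are expected to limit onto two \emph{different} cone metrics $\mathfrak C(\omega_1)\neq\mathfrak C(\omega_2)$ in $\mathcal N_v\cong\widetilde G_v/K_v$ (indeed Conjecture \ref{conj6.7} asserts $\mathfrak C$ is surjective), and these differ by an element of the generally \emph{noncompact} group $\widetilde G_v$. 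Already on $X=\C^n$ with the standard valuation, $\omega_0$ and $F^*\omega_0$ for $F\in\GL(n,\C)$ lie in the same $\mathcal M_v$ but are equivalent only with a constant depending on $F$; this shows the single uniform constant in the literal statement can only be expected modulo the full $\mathbb R_{>0}\times G_v$-action (not just scaling), and it means your trace quantity tends at infinity to $\operatorname{tr}$ of one cone metric against another --- bounded, which still suffices for the maximum principle, but not $n$, and not with a bound uniform over $\mathcal M_v$. Your closing paragraph on compactness of $\mathcal M_v/(\mathbb R_{>0}\times G_v)$ is therefore the right worry, but note the paper sidesteps it in the isolated-singularity case by quotienting through $\mathcal N_v$ and invoking the Conlon--Hein uniqueness theorem \cite{CH1} rather than by any blow-up or properness argument; a bubbling analysis of the kind you sketch is not present in the paper and would be a substantial additional undertaking.
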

It is easy to see the converse is true:  if two compatible Calabi-Yau metrics $\omega_1$ and $\omega_2$ are uniformly equivalent, then $v_{\omega_1}=v_{\omega_2}$. We remark that one can also ask an analogous question for local singularities of K\"ahler-Einstein metrics (on algebraic varieties with klt singularities), that is, whether any two such metrics are indeed locally uniformly equivalent. The answer to this question, currently not known,  is also related to Conjecture \ref{conj6.5}. The recent work by Chiu-Sz\'ekelyhidi \cite{ChSz} made progress in understanding local singular behavior of polarized K\"ahler-Einstein metrics.

\

\textbf{Step 4}. Denote by $\mathcal N_v$ the space of Calabi-Yau cone metrics on $\mathcal C_v$ (with the given Reeb vector field determined by $v$). Notice $\mathcal C_v$ can be realized as a weighted asymptotic cone at infinity for some embedding $X\subset \C^N$. One is attempted to define a natural  map $\mathfrak C: \mathcal M_v\rightarrow \mathcal N_v$ given by taking the (appropriate) rescaled limit of the K\"ahler form under the weighted asymptotic cone construction.

\begin{conjecture}[Generalized uniqueness]\label{conj6.7}
    The map $\mathfrak C$ is well-defined and is bijective. 
\end{conjecture}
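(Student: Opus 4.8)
The plan is to prove the three assertions packed into Conjecture \ref{conj6.7} separately --- that $\mathfrak C$ is well-defined, that it is injective, and that it is surjective --- working throughout in the $\p\bp$-exact setting on the smooth affine variety $X$, where the $2$-step degeneration theory of \cite{DS2} is available thanks to the observation of \cite{Gabor2}. For well-definedness, fix $\omega\in\mathcal M_v$. By \cite{DS2,Liu,CM} its asymptotic cone exists, is unique as a Calabi-Yau cone, and is independent of all the choices involved, so what must be checked is twofold: (i) the graded ring $R(W)$ attached to the degree filtration of $v=v_\omega$ is the coordinate ring of $\mathcal C_v$, i.e.\ $W\cong\mathcal C_v$ equivariantly; and (ii) the suitably normalized rescalings $(\Lambda^{-j})^{*}\omega$ converge to the prescribed Calabi-Yau cone metric on $\mathcal C_v$. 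Statement (i) is precisely Conjecture \ref{conj6.2}: I would rerun the argument of the present paper --- graft $\omega$ to a Calabi-Yau metric on $W$ and apply the Bishop--Gromov rigidity on $W$ --- the only input that needs replacing being the polynomial rate of Theorem \ref{t:main2}, which in general one should substitute by Gromov--Hausdorff convergence to the asymptotic cone together with the algebro-geometric control of \cite{DS2}. Statement (ii) is the conjectural general version of Theorem \ref{t:main2} alluded to after Conjecture \ref{conj6.2}; once it holds, the Reeb field of $\mathfrak C(\omega)$ is forced to be the one attached to $v$, since it is read off from $d_\omega$.

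For injectivity, take $\omega_1,\omega_2\in\mathcal M_v$ with $\mathfrak C(\omega_1)=\mathfrak C(\omega_2)$ and write $\omega_2=\omega_1+\sqrt{-1}\p\bp u$. Both satisfy $\omega_i^n=(\sqrt{-1})^{n^2}\Omega\wedge\ols{\Omega}$, so subtracting yields $\sqrt{-1}\p\bp u\wedge\sum_{k=0}^{n-1}\omega_1^{k}\wedge\omega_2^{n-1-k}=0$, i.e.\ $\operatorname{tr}_{g'}(\sqrt{-1}\p\bp u)=0$ for the Riemannian metric $g'$ determined by the positive $(n-1,n-1)$-form $\sum_{k=0}^{n-1}\omega_1^{k}\wedge\omega_2^{n-1-k}$. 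Granting the uniform equivalence of Conjecture \ref{conj6.5}, $g'$ is uniformly equivalent to the Ricci-flat, Euclidean-volume-growth metric $g_1$, so the equation for $u$ is uniformly elliptic on a space where Krylov--Safonov and Cheng--Yau type Liouville theorems are available; since $\mathfrak C(\omega_1)=\mathfrak C(\omega_2)$ the potentials have the same leading quadratic behaviour and $\sqrt{-1}\p\bp u$ decays at infinity, which should pin $u$ down to at most a constant, whence $\omega_1=\omega_2$. The delicate step here is to extract a strong enough growth bound on $u$ --- ideally $o(r)$, or at least ruling out a nontrivial linear part --- from the mere coincidence of the asymptotic cone \emph{metrics}, something that in the polynomial-decay setting is immediate but in general requires quantifying how the Kähler potential is determined by the Gromov--Hausdorff geometry at infinity.

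For surjectivity, fix $g_0\in\mathcal N_v$ and realize $X$ as an affine subvariety of some $\C^N$ whose weighted asymptotic cone, for the torus action defining $v$, is $\mathcal C_v$; this is possible precisely because $v$ is polystable, so $W=\mathcal C_v$ already and the degeneration is in one step. Graft the cone metric $g_0$ on the end of $X$ to a complete Kähler metric $\omega_{\mathrm{app}}$ solving the Calabi-Yau equation with rapidly decaying error --- a Tian--Yau-type ansatz \cite{TY2} --- and then solve $(\omega_{\mathrm{app}}+\sqrt{-1}\p\bp\varphi)^{n}=(\sqrt{-1})^{n^2}\Omega\wedge\ols{\Omega}$ with $\varphi\to0$ at infinity; the resulting $\omega$ is compatible, satisfies $v_\omega=v$ by construction of the degree filtration, and has $\mathfrak C(\omega)=g_0$ because $\varphi$ is a lower-order perturbation. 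When $\mathcal C_v$ has smooth link this is exactly Conlon--Hein \cite{CH3}. \textbf{The step I expect to be the main obstacle is this one in the presence of a singular link:} it amounts to a general answer to Question \ref{q6.3}, namely a Tian--Yau construction adapted to cones with singular cross-section together with uniform estimates for the complex Monge--Amp\`ere equation along the associated degeneration --- carried out in the spirit of Sections \ref{S3}--\ref{s5} of the present paper but without the quadratic curvature decay hypothesis --- and at present this is known only in special cases \cite{HN,YLi,Gabor1,CR,Chiu}.
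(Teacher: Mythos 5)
Keep in mind that the paper presents this as a \emph{conjecture}, argued only in the special case where $\mathcal C_v$ has an isolated singularity at the vertex (so that every $\omega\in\mathcal M_v$ has quadratic curvature decay and the main theorems of the paper apply). Your proposal tackles the general case and correctly identifies the same obstacles the paper names afterwards --- Conjecture~\ref{conj6.2}, the conjectural general version of Theorem~\ref{t:main2}, and Question~\ref{q6.3} for singular links --- so at that level your plan is consistent with the paper's framing. Your treatment of surjectivity (Tian--Yau/Conlon--Hein grafting) and well-definedness (reduce to the $W\cong\mathcal C_v$ step plus convergence of the rescaled metric and the determination of the Reeb field by $d_\omega$) likewise matches the paper's sketch.

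Where you diverge, and where you understate a difficulty, is the injectivity step, even restricted to the special case the paper actually handles. The paper gets injectivity by citing Conlon--Hein's uniqueness theorem \cite[Theorem~3.1]{CH1}; you instead propose writing $\omega_2=\omega_1+\ii\p\pp u$ and running a Liouville argument for the linear uniformly elliptic equation $\operatorname{tr}_{g'}(\ii\p\pp u)=0$. Your remark that the required growth bound on $u$ is ``immediate'' in the polynomial-decay setting is not accurate: from $|\omega_1-\omega_2|_{g_1}=O(r^{-\delta})$ one gets a priori only $u=O(r^{2-\delta})$, which is \emph{superlinear} whenever $\delta<1$, so a plain sublinear Liouville theorem does not close. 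To finish one must bootstrap the decay of $u$ towards zero using mapping properties of the linearized operator in weighted H\"older spaces with weights avoiding indicial roots --- which is in effect what goes into the Conlon--Hein proof that the paper simply quotes. A secondary point: in the special case your invocation of Conjecture~\ref{conj6.5} to get uniform ellipticity is superfluous, since two metrics in $\mathcal M_v$ that both converge polynomially to the same cone metric are automatically uniformly equivalent.
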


We explain Conjecture \ref{conj6.5} and \ref{conj6.7} indeed hold if we assume $\mathcal C_v$ has only an isolated singularity at the vertex. In this case any Calabi-Yau metric in $\mathcal M_v$ has quadratic curvature decay so we may apply the arguments of this paper. Since we have fixed the negative valuation $v$, this fixes the algebro-geometric degeneration from $X$ to $\mathcal C_v$, in terms of a weighted asymptotic cone. Then by Section \ref{section:defining projection map} we can fix the diffeomorphism between the end of $X$ and the end of $\mathcal C_v$. For every $\omega_{\infty}\in \mathcal N_v$, by \cite{CH1} (or the arguments in Section \ref{s5} of this paper) one gets a compatible Calabi-Yau metric on $X$ which is asymptotic to $\omega_\infty$ at a polynomial rate. Conversely, given a compatible Calabi-Yau metric $\omega\in \mathcal M_v$, using the arguments in this paper, we can get a Calabi-Yau cone metric on $\mathcal C_v$ which is the asymptotic cone of $\omega$ at a polynomial rate. One can see this is exactly the $\mathfrak C$ map that we want. Furthermore, it is independent of the auxiliary data chosen. The uniqueness theorem \cite[Theorem 3.1]{CH1} proved by Conlon-Hein implies that $\mathfrak C$ is bijective. 

Notice even in this case it is not clear that the action of $G_v$ is transitive on $\mathcal M_v$ (modulo scaling): we know that the action of $\widetilde G_v$ on $\mathcal{N}_v$ is transitive by the generalized Bando-Mabuchi theorem (see for example, \cite[Proposition 4.8] {DS2}). Indeed, $\mathcal N_v$ can be identified with the homogeneous space $\widetilde G_v/K_v$ for a maximal compact subgroup $K_v$ whose complexification is $\widetilde G_v$.  But the  map $\varphi: G_v\rightarrow\tilde G_v$ could be complicated in general: neither the injectivity nor the surjectivity seems obvious. It seems to us that Conjecture \ref{conj6.7} is a more natural formulation than a naive uniqueness statement. We mention that the above conjecture is compatible with the result and conjecture in \cite{Gabor2, Chiu} for complete Calabi-Yau metrics on $\C^3$ with specific asymptotic cones.

\

The above four steps altogether would lead to a satisfactory classification of compatible Calabi-Yau metrics on  a fixed smooth affine variety. More generally, when the metrics are not necessarily $\p\bp$-exact, one  needs to further involve K\"ahler classes in the discussion. 

\

From another perspective, one can also try to classify complete Calabi-Yau metrics asymptotic to a given Calabi-Yau cone, without fixing the underlying algebraic variety.  
Notice again this should contrast with the local situation. As is well known (see \cite{DS2, HN, Gabor1}), there is no boundedness of local singularities of Calabi-Yau metrics with a given tangent cone if the latter has nonisolated singularities. One example is given by the $A_k$ singularities in dimension $n\geqslant 3$ for $k\geqslant 4$, whose local tangent cones are all given by the  product of an $n-1$ dimensional $A_1$ singularity with $\mathbb C$. However, for asymptotic cones, there are more constraints. For example, the Bishop-Gromov inequality ensures a strong uniform non-collapsing condition, which gives a preliminary compactness in this situation. In particular, one would expect a well-behaved moduli space when the volume density of the asymptotic cone is bounded away from zero.

\begin{problem}[Moduli problem]
Given a positive number $\lambda>0$, study the structure of the moduli space of  (possibly singular) $\p\bp$-exact Calabi-Yau metrics on affine varieties whose asymptotic cones have volume density at least $\lambda$.

\end{problem}

 It is straightforward to formulate an algebro-geometric counterpart of the above problem. That is,  one can study the moduli space $\mathfrak M_{\geqslant \lambda}$ of pairs $(X, v)$, where $X$ is an affine variety and $v$ is a polystable negative valuation on $X$ with $\mathcal C_v$ having volume density at least $\lambda$. It seems to be an approachable question given recent advances in birational algebraic geometry. However, as pointed out by Yuji Odaka there are some complications. For simplicity we fix a K-polystable cone $\mathcal C$, then the deformation space $\operatorname{Def}(\mathcal C)$ of $\mathcal C$ as affine varieties is in general infinite dimensional, but it is natural to expect that the subspace $\operatorname{Def}_{-}(\mathcal C)$  which consists of those deformations to the above pairs $(X, v)$ with $\mathcal C_v=\mathcal C$ is  finite dimensional, and there is an action of $\widetilde G_v$ on $\text{Def}_{-}(\mathcal C)$. Now since $\widetilde G_v$ is noncompact the quotient is in general not Hausdorff, so any meaningful construction of such moduli space will have to address this issue. On the other hand, if we accept the above picture, then the space of $\p\bp$-exact Calabi-Yau metrics asymptotic to $\mathcal C$ (up to isomorphism) is given by the quotient $\text{Def}_{-}(\mathcal C)/K_v$. It seems an interesting question to us to explore the structure here in more detail.

\

Finally, we point out that while Theorem \ref{t:main2} is a Riemannian geometric statement, our proof in this paper hinges on complex geometry.
One can  ask whether the polynomial convergence rate
is a more general  phenomenon in geometric analysis, or is it  only special in the complex geometric world. Notice this is a question involving \emph{global} geometry, not only the end at infinity. Indeed, we expect that the technique of Adams-Simon \cite{AS} can be used to show that given a Calabi-Yau cone $\mathcal C$ with obstructed deformations in an appropriate sense, one can produce an end of Calabi-Yau metric which is asymptotic to $\mathcal C$ at only a logarithmic rate; see \cite{GChen} for a related work on isolated singularities of $G_2$ metrics.  Such an end is also interesting in terms of complex geometry since it can \emph{not} be filled in as a complete K\"ahlerian manifold -- otherwise using the Tian-Yau construction one would get a contradiction with Theorem \ref{t:main2}.

\begin{question}[Polynomial rate to the asymptotic cone]\label{conj6.9}
    Is a complete Ricci-flat Riemannian manifold with Euclidean volume growth always asymptotic to a unique metric cone at a polynomial rate (in the Gromov-Hausdorff distance)?
\end{question}

Even the uniqueness of asymptotic cones is not known to date without the quadratic curvature decay condition. An answer to Question \ref{conj6.9}, either in the positive or the negative, would  be interesting.   A special setting is given by manifolds with special holonomy, i.e., $G_2$ and $Spin(7)$ manifolds. One can also explore similar questions for global solutions of other geometric PDEs, such as minimal submanifolds and Yang-Mills connections. These will be important in classifying singularity models.

\printbibliography

\end{document}